\newtheorem{Prop}{Proposition}[section]
\newtheorem{Thm}[Prop]{Theorem}
\newtheorem{Lemma}[Prop]{Lemma}
\newtheorem{Cor}[Prop]{Corollary}
\newtheorem{Remark}[Prop]{Remark}
\newtheorem{Definition}[Prop]{Definition}
\newcommand{\str}[1]{\langle #1\rangle}
\def\az{\alpha}      \def\ud{{\underline{d}}}
\def\bbn{{\mathbb N}}  \def\bbz{{\mathbb Z}}  \def\bbq{{\mathbb Q}} \def\bb1{{\mathbb 1}}
   \def\bbe{{\mathbb E}} 
  \def\bbc{{\mathbb C}}
\def\ra{\rightarrow}
\def\hom{\mbox{Hom}}
\def\ext{\mbox{Ext}\,} 
\def\dim{\mbox{dim}\,}
\def\uq2{U_q(\hat{sl}_2)}
\def\bb{{\bf b}}
\def\nd{{\noindent}}
\def\mc{{\mathcal{C}}}
\def\md{{\mathcal{D}}}
\def\mo{{\mathcal{O}}}
\def\ue{{\underline{e}}}
\begin{document}

\title[Integral bases of cluster algebras and
tame quivers] {Integral bases of cluster algebras and
representations of tame quivers}

\thanks{The research was partially supported  by NSF of China (No. 10631010) and
the Ph.D. Programs Foundation of Ministry of Education of China (No.
200800030058)}

\keywords{$\mathbb{Z}$-basis,  cluster algebra, tame quiver}
\author{Ming Ding, Jie Xiao and Fan Xu}
\address{Department of Mathematical Sciences\\
Tsinghua University\\
Beijing 10084, P.~R.~China} \email{m-ding04@mails.tsinghua.edu.cn
(M.Ding),\ jxiao@math.tsinghua.edu.cn (J.Xiao),\
fanxu@mail.tsinghua.edu.cn (F.Xu)}

\maketitle


\bigskip

\begin{abstract}
In \cite{CK2005} and  \cite{SZ}, the authors constructed the bases
of cluster algebras of finite types and of type
$\widetilde{A}_{1,1}$, respectively. In this paper, we will deduce
$\mathbb{Z}$-bases for cluster algebras of affine types.
\end{abstract}

\setcounter{tocdepth}{1}
\tableofcontents

\section{Introduction}
Cluster algebras were introduced by S. Fomin and A. Zelevinsky
\cite{FZ} in order to develop a combinatorial approach to study
problems of total positivity in algebraic groups and canonical
bases in quantum groups. The link between acyclic cluster algebras
and representation theory of quivers were first revealed in
\cite{MRZ}. In~\cite{BMRRT}, the authors introduced the cluster
categories as the categorification of acyclic cluster algebras.
Let $Q$ be an acyclic quiver with vertex set $Q_0=\{1,2,\cdots,
n\}$. Let $A=\bbc Q$ be the path algebra of $Q$ and we denote by
$P_i$ the indecomposable projective $\bbc Q$-module with the
simple top $S_i$ corresponding to $i\in Q_0$ and $I_i$ the
indecomposable injective $\bbc Q$-module with the simple socle
$S_i$. Let $\md^b(Q)$ be the bounded derived category of
$\mathrm{mod} \bbc Q$ with the shift functor $[1]$ and the
AR-translation $\tau$. The cluster category associated to $Q$ is
the orbit category $\mathcal{C}(Q):=\md^{b}(Q)/F$ with
$F=[1]\circ\tau^{-1}$. Let $\mathbb{Q}(x_1,\cdots,x_n)$ be a
transcendental extension of $\mathbb{Q}$ in the indeterminates
$x_1, \cdots, x_n.$ The Caldero-Chapton map of an acyclic quiver
$Q$ is the map
$$X_?^Q: \mathrm{obj}(\mc(Q))\ra\bbq(x_1,\cdots,x_n)$$ defined in \cite{CC} by
the following rules:
            \begin{enumerate}
                \item if $M$ is an indecomposable $\bbc Q$-module, then
                    $$
                        X_M^Q = \sum_{\textbf e} \chi(\mathrm{Gr}_{\ue}(M)) \prod_{i \in Q_0} x_i^{-\left<\ue, s_i\right>-\left <s_i, \underline{\mathrm{dim}}M - \ue\right
                        >};
                    $$
                \item if $M=P_i[1]$ is the shift of the projective module associated to $i \in Q_0$, then $$X_M^Q=x_i;$$
                \item for any two objects $M,N$ of $\mathcal C_Q$,
                we have
                    $$X_{M \oplus N}^Q=X_M^QX_N^Q.$$
            \end{enumerate}
Here, we denote by $\left <-,-\right >$ the Euler form on $\bbc
Q$-mod and $Gr_{\ue}(M)$ is the $\ue$-Grassmannian of $M,$ i.e. the
variety of submodules of $M$ with dimension vector $\ue.$
$\chi(\mathrm{Gr}_{\ue}(M))$ denote its Euler-Poincar$\acute{e}$
characteristic. For any object $M\in \mc(Q)$, $X_M^Q$ will be called
the generalized cluster variable for $M$.

We note that the indecomposable $\bbc Q$-modules and $P_i[1]$ for
$i\in Q_0$ exhaust the indecomposable objects of the cluster
category $\mc(Q)$:
$$\mathrm{ind}\mc(Q)=\mathrm{ind}\bbc Q\sqcup\{P_i[1]:i\in
Q_0\}.$$ Each object $M$ in $\mc(Q)$ can be uniquely decomposed in
the following way:
$$M=M_0\oplus P_M[1]$$
where $M_0$ is a module and $P_M$ is a projective  module. Let
$P_M=\bigoplus_{i\in{Q_0}}m_iP_i.$ We extend the definition of the
dimension vector $\mathrm{\underline{dim}}$ on modules in
$\mathrm{mod}\bbc Q$ to objects in $\mc(Q)$ by setting
$$\mathrm{\underline{dim}}M=\mathrm{\underline{dim}}M_0-(m_i)_{i\in{Q_0}}.$$
 Let $R=(r_{ij})$ be a
matrix of size $|Q_0|\times |Q_0|$ satisfying
$$r_{ij}=\dim_{\bbc}\mathrm{Ext}^1(S_i,S_j)$$ for any $i, j\in Q_0$. The Caldero-Chapton map can be reformulated by the following rules (see \cite{Xu} or \cite{Hubery2005}):

\begin{enumerate}
\item
$$
X_{\tau P}=X_{P[1]}=x^{\underline{\mathrm{dim}}{P/rad P}},
X_{\tau^{-1}I}=X_{I[-1]}=x^{\underline{\mathrm{dim}} socI}$$ for
any projective $\bbc Q$-module $P$ and any injective $\bbc
Q$-module $I$; \item
$$ X_{M}=\sum_{\ue}\chi(\mathrm{Gr}_{\ue}(M))x^{\ue R+(\underline{\mathrm{dim}}M-\ue)R^{tr}-
\underline{\mathrm{dim}}M }
$$
where $M$ is a $\bbc Q$-module, $R^{tr}$ is the transpose of the
matrix $R$ and $x^{v}=x^{v_1}_1\cdots x^{v_n}_n$ for
$v=(v_1,\cdots,v_n)\in \mathbb{Z}^{n}$.
\end{enumerate}

Let $\mathcal{AH}(Q)$ be the subalgebra of $\bbq(x_1,\cdots, x_n)$
generated by
$$\{X_M, X_{\tau P}\mid M, P\in\mathrm{ mod}\bbc Q, P
\mbox{ is projective module}\}$$ and  $\mathcal{EH}(Q)$ be the
subalgebra of $\mathcal{AH}(Q)$ generated by
 $$\{X_{M}\mid M\in \mathrm{ind}\mc(Q), \ext^1_{\mc{(Q)}}(M,M)=0\
 \}.$$

In \cite{CK2006}, the authors showed that the Caldero-Chapton map
 induces a one to one correspondence between indecomposable
 objects  in $\mc(Q)$ without self-extension and the cluster
 variables of the cluster algebra $\mathcal{A}(Q).$
If $Q$ is a simply laced Dynkin quiver , $\mathcal{AH}(Q)$ coincides
with $\mathcal{EH}(Q)$. In \cite{CK2005}, the authors showed that
the set $$\{X_M\mid M\in \mc(Q), \ext^1_{\mc{(Q)}}(M,M)=0 \}$$ is a
$\bbz$-basis of $\mathcal{EH}(Q)$, i.e. the cluster monomials is a
$\bbz$-basis of $\mathcal{EH}(Q)$. If $Q$ is a quiver of
$\widetilde{A}_{1,1}$, $\mathcal{AH}(Q)$ also coincides with
$\mathcal{EH}(Q)$ (see \cite{SZ}, \cite{CZ}). Furthermore, in
\cite{CZ}, the authors gave a $\bbz$-basis of $\mathcal{AH}(Q)$
called the semicanonical basis. If $Q$ is a quiver of type
$\widetilde{D}_4$, $\mathcal{AH}(Q)$ is still equal to
$\mathcal{EH}(Q)$ and a $\bbz$-basis is given in \cite{DX1} (see
also Section \ref{d4}). Recently, in \cite{Dupont1}, Dupont
introduced generic variables for any acyclic cluster algebra and
conjectured that generic variables constitute a $\bbz$-basis for any
acyclic cluster algebra. He proved that the conjecture is true for a
cluster algebra of type $\widetilde{A}_{p,q}$ and for any affine
type, deduce the conjecture to a certain \emph{difference property}.
The difference property has been confirmed for type
$\widetilde{A}_{p,q}$ in \cite{Dupont1}. At the same time, we
independently constructed various $\bbz$-bases for cluster algebras
of tame quivers with alternating orientations (i.e. any vertex is a
sink or a source) in the first version of this paper (see
\cite{DXX}). The present paper is the strengthen version of
\cite{DXX}. It is self-contained. One need not refer to \cite{DXX}.

The main goal of the present paper is to construct various
$\bbz$-bases for the cluster algebra of a tame quiver $Q$ with any
acyclic orientation. Let us describe it in detail. Let $Q$ be a tame
quiver. Then the underlying graph of $Q$ is of affine type
$\widetilde{A}_{p,q}, \widetilde{D}_n (n\geq 4)$ or $\widetilde{E}_m
(m=6,7,8)$ and $Q$ contains no oriented cycles. There are many
references about the theory of representations of tame quivers, for
example, see \cite{DR76} or \cite{CB}.  Define the set
$$
\textbf{D}(Q)=\{\ud\in \bbn^{Q_0}\mid  \exists \mbox{ a regular
module }T\oplus R \mbox{ such that }
\mathrm{\underline{dim}}(T\oplus R)=\ud,$$$$ T \mbox{
indecomposable, } \mathrm{Ext}^1_{\mc(Q)}(T,T)\neq 0,
\mathrm{Ext}^1_{\mc(Q)}(T, R)=\mathrm{Ext}^1_{\mc(Q)}(R,R)=0\}.
$$
We make an assignment, i.e., a map $$\phi: \textbf{D}(Q)\rightarrow
\mathrm{obj}(\mc(Q))$$ and set $$X^{\phi}_{\ud}:=X_{\phi(T\oplus
R)}.$$ It is clear that the above assignment is not unique. For
simplicity and without confusion, we omit $\phi$ in the notation
$X^{\phi}_{\ud}$. The main theorem of this paper is as follows.
\begin{Thm}\label{30}
Let $Q$ be a tame quiver and fix an assignment. Then the set
$$\mathcal{B}(Q):=\{X_{L},X_{\ud}|L\in \mc(Q), \ud\in \mathrm{\mathrm{\textbf{D}}}(Q), \mathrm{Ext}^{1}_{\mc(Q)}(L,L)=0\}$$
is a $\mathbb{Z}$-basis of $\mathcal{EH}(Q).$
\end{Thm}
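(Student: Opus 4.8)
The plan is to establish spanning and linear independence of $\mathcal{B}(Q)$ separately, organizing everything around the decomposition of $\mathrm{ind}\,\mc(Q)$ for a tame quiver into the preprojective, preinjective and regular parts.

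First I would assemble the multiplication formulas for the characters $X_M$. Since $\mathcal{EH}(Q)$ is by definition generated by the $X_M$ with $M$ indecomposable and $\mathrm{Ext}^1_{\mc(Q)}(M,M)=0$, a spanning set is given by all monomials $\prod_j X_{M_j}$ in these generators, and the task is to rewrite each such monomial inside $\mathcal{B}(Q)$. The basic tool is the cluster multiplication theorem: for indecomposables $M,N$ with $\dim\mathrm{Ext}^1_{\mc(Q)}(M,N)=1$ one has $X_MX_N=X_B+X_{B'}$, where $B,B'$ are the middle terms of the two non-split triangles connecting $M$ and $N$, and more generally an expansion in terms of Euler characteristics of the relevant extension varieties. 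When $M\oplus N$ is rigid the product is already the cluster monomial $X_{M\oplus N}\in\mathcal{B}(Q)$; the formulas are needed precisely to treat the incompatible products, in which $M\oplus N$ acquires a self-extension.

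Next I would carry out the reduction. The preprojective and preinjective indecomposables are directing, hence rigid, so every monomial built from them reduces to a $\mathbb{Z}$-combination of cluster monomials $X_L$ by iterating the formulas exactly as in the finite type and $\widetilde{A}_{1,1}$ cases recalled in the introduction; the same applies to the mixed products once one separates the three components. The genuinely new situation is internal to a tube: an indecomposable $T$ of regular length at least the rank of its tube satisfies $\mathrm{Ext}^1_{\mc(Q)}(T,T)\neq 0$, so $X_T$ is not a cluster variable. Here the elements $X_{\ud}$, $\ud\in\mathbf{D}(Q)$, are exactly what one needs: I would show that any product of characters of regular modules in a fixed tube expands, through the recursions satisfied by these characters (of Chebyshev/$SL_2$ type for the homogeneous tubes and their rank-$r$ analogues in the exceptional ones), as a $\mathbb{Z}$-combination of the $X_{\ud}$ together with rigid regular characters. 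The delicate point is that the homogeneous tubes form a $\mathbb{P}^1$-family whose contributions must be summed uniformly; this is where Dupont's difference property is in effect being verified, and I expect this regular spanning step to be the main obstacle.

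Finally, for linear independence I would argue by a leading-term (pointedness) argument. Writing each $X_M$ as a Laurent polynomial in $x_1,\dots,x_n$, the Laurent expansion of an indecomposable character has its denominator recorded by $\udim M$, and this propagates through products so that every element of $\mathcal{B}(Q)$ possesses a distinguished dominant monomial. Fixing a term order that refines the partial order by these dominant exponents, I would verify that the dominant monomials of the elements of $\mathcal{B}(Q)$ are pairwise distinct, so that the transition matrix to the monomial basis of Laurent polynomials is unitriangular and the elements of $\mathcal{B}(Q)$ are $\mathbb{Z}$-linearly independent. Combined with the spanning established above, this proves that $\mathcal{B}(Q)$ is a $\mathbb{Z}$-basis of $\mathcal{EH}(Q)$.
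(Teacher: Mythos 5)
Your overall architecture --- spanning via the multiplication formulas plus a triangularity argument, and linear independence via denominators/leading terms --- does match the paper's: linear independence is obtained there exactly from the denominator theorem together with Caldero--Keller's result that characters of objects with pairwise distinct dimension vectors over a graded quiver are linearly independent. But there is a genuine gap at precisely the point you flag as ``the main obstacle'' and then leave unresolved. The set $\mathcal{B}(Q)$ contains only \emph{one} element $X_{\ud}$ for each $\ud\in\mathbf{D}(Q)$, chosen by an arbitrary assignment, so the whole proof hinges on showing that characters of non-rigid regular objects with the same dimension vector taken from \emph{different} tubes (homogeneous or not) differ only by a combination of characters of strictly smaller dimension vector. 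This is the content of the paper's tube-comparison propositions in Section 4, proved by expanding $\prod_i X_{S_i}^{v_i}$ in two ways and comparing leading terms via the ``constant term $1$'' property of the numerators; the homogeneous-versus-non-homogeneous comparison further needs the explicit identity $2X_{P_e}X_{I}=-X_{\delta}+\sum_{k}X_{\delta_{k,1}}+(\mbox{lower terms})$, or, for $\widetilde{A}_{p,p}$, the difference property proved in the appendix. Without this step neither spanning nor even the independence of $\mathcal{B}(Q)$ from the choice of assignment is established. Relatedly, your sub-claim that monomials in preprojective and preinjective characters reduce to $\mathbb{Z}$-combinations of cluster monomials is false: already $X_{P_e}X_{I}$ with $\underline{\mathrm{dim}}(P_e\oplus I)=\delta$ produces the non-rigid regular characters $X_{\delta}$ and $X_{E_i}$, so the elements $X_{\ud}$ are needed to absorb mixed products as well, not only products internal to a tube.

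A second structural gap concerns orientation. The technical inputs you would need (the constant-term property and the tube-comparison identities) are established in the paper only for \emph{alternating} orientations; the general tame quiver is then reached by transporting the basis along Zhu's extended BGP-reflection functors and the canonical cluster-algebra isomorphisms, with an extra lemma to control the image of the homogeneous regular simple of dimension $\delta$ under reflection, and with type $\widetilde{A}_{p,q}$, $p\neq q$ (which admits no alternating orientation) handled by a separate appeal to Dupont. Your proposal works directly with an arbitrary orientation and never engages with this reduction. Finally, integrality of the expansion coefficients is not automatic from the multiplication formulas, which carry denominators such as $\mathrm{dim}_{\bbc}\mathrm{Ext}^1(M,N)$; the paper extracts it from the unitriangularity of the expansion with respect to the order $\prec$ on dimension vectors together with the integrality of the Laurent coefficients of each $X_M$ --- an argument compatible with your leading-term setup but not actually supplied by it.
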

 Given an assignment, we obtain a
$\bbz$-basis. In fact, by Theorem \ref{30}, we will see that
$\bbz^{Q_0}=\textbf{D}(Q)\cup \textbf{E}(Q)$ where
$\textbf{E}(Q)=\{\ud\in \bbz^{Q_0}\mid \exists L\in \mc(Q)\mbox{
satisfies } \mathrm{\underline{dim}}L=\ud \mbox{ and
}\mathrm{Ext}_{\mc(Q)}^1(L,L)=0\}$ (Corollary \ref{dimension}). Let
$\ud=n\delta+\ud_0$ be the canonical decomposition of $\ud$ (see
\cite{Kac}) and $E$ be an indecomposable regular simple module of
dimension vector $\delta$. We set $T:=E[n]$ to be the indecomposable
regular module with quasi-socle $E$ and quasi-length $n$. Then we
obtain another $\bbz$-basis
$$\mathcal{B}'(Q):=\{X_{L},X_{E[n]\oplus R}|L\in \mc(Q), \mathrm{Ext}^{1}_{\mc(Q)}(L,L)=0, \mathrm{Ext}^{1}_{\mc(Q)}(R,R)=0\}.$$

The paper is organized as follows. In Section 2, we recall various
cluster multiplication formulas. In Section 3 and Section 4, we
characterize the generalize cluster variables $X_M$ for $M\in
\mc(Q)$ and  a quiver $Q$ with an alternating orientation. In
particular, we compare the generalized cluster variables of modules
in homogeneous tubes and non-homogeneous tubes (see Proposition
\ref{5}). Note that the proof of Proposition \ref{5} does not depend
on the difference property unless $Q$ is of type  $\widetilde{A}$.
In Section 5, we prove Theorem \ref{30} by using the BGP-reflection
functor between cluster categories defined in \cite{Zhu} and the
construction of $\bbz$-bases for cluster algebras of type
$\widetilde{A}_{p, q}$ where $p\neq q$ in \cite{Dupont1}. The
BGP-reflection functor between cluster categories admits that one
can drop the assumption of alternating orientation. We illustrate
Theorem \ref{30} by two examples of type $\widetilde{A}_{1,1}$ and
$\widetilde{D}_4$ in Section 6. Section 7 shows the inductive
formulas for the multiplication between generalized cluster
variables of modules in a tube.  In order to compare our bases with
generic variables in \cite{Dupont1}, we prove the difference
property for any  tame quiver (Theorem \ref{difference}) in the
appendix of the present paper (Section 8). A direct corollary is
that the set of generic variables (denoted by $\mathcal{B}_g(Q)$)
defined in \cite{Dupont1} is a $\bbz$-basis of a cluster algebra for
affine type. There is a unipotent transition matrix from
$\mathcal{B}_g(Q)$ to $\mathcal{B}'(Q)$ (Corollary \ref{un}).

After the present paper appear, the latest advance is that
Geiss-Leclerc-Schr\"oer proved the above Dupont's conjecture (in the
more general context) via the construction of the dual semicanonical
bases for preprojective algebras (arXiv:1004.2781).

\section{The cluster multiplication formulas}
In this section, we recall various cluster multiplication
formulas. In Section 7, we will show inductive cluster
multiplication formulas for a tube.
\begin{subsection}{} We recall the cluster multiplication theorem in
\cite{XiaoXu} and \cite{Xu}. It is a generalization of the cluster
multiplication theorem for finite type \cite{CK2005} and for
affine type \cite{Hubery2005}\cite{Hubery2007}. First, we
introduce some notations in \cite{XiaoXu} and \cite{Xu}. Let
$Q=(Q_0,Q_1,s,t)$ be a finite acyclic quiver, where $Q_0$ and
$Q_1$ are the finite sets of vertices and arrows, respectively,
and $s,t: Q_1\rightarrow Q_0$ are maps such that any arrow $\az$
starts at $s(\az)$ and terminates at $t(\az).$ For any dimension
vector
 $\ud=(d_i)_{i\in Q_0},$ we consider the affine space over $\bbc$
$$\bbe_{\ud}=\bbe_{\ud}(Q)=\bigoplus_{\az\in Q_1}\hom_{\bbc}(\bbc^{d_{s(\az)}},\bbc^{d_{t(\az)}}).$$
Any element $x=(x_{\az})_{\az\in Q_1}$ in $\bbe_{\ud}$ defines a
representation $M(x)=(\bbc^{{\ud}},x)$ where
$\bbc^{{\ud}}=\bigoplus_{i\in Q_0}\bbc^{d_i}$. We set
$(M(x))_i:=\bbc^{d_i}.$ Naturally we can define the action of the
algebraic group $G_{{\ud}}(Q)=\prod_{i\in Q_0}GL(\bbc^{d_i})$ on
$\bbe_{\ud}$ by
$g.x=(g_{t(\alpha)}x_{\alpha}g^{-1}_{s(\alpha)})_{\alpha\in Q_1}.$
Let $\ue$ be the dimension vector
$\mathrm{\underline{dim}}M+\mathrm{\underline{dim}}N$. For $L\in
\bbe_{\ue},$ we define
$$\str{L}:=\{L'\in \bbe_{\ue}\mid
\chi(\mathrm{Gr}_{\ud}(L'))=\chi(\mathrm{Gr}_{\ud}(L)) \mbox{ for
any }\ud\}.$$  There exists a finite subset $S(\ue)$ of $\bbe_{\ue}$
such that (\cite[Corollary 1.4]{Xu})
$$\bbe_{\ue}=\bigsqcup_{L\in S(\ue)}\str{L}.$$ Let $\mo$ is a
$G_{\ue}$-invariant constructible subset of $\bbe_{\ue}$. Define
$$
\mathrm{Ext}^1(M,N)_{\mo}=\{[0\rightarrow N\rightarrow
L\rightarrow M\rightarrow 0]\in \mathrm{Ext}^1(M,N)\setminus\{0\}
\mid L\in \mo\}
$$
There is a $\bbc^*$-action on $\mathrm{Ext}^1(M,N)$. The orbit
space is denoted by $\mathbb{P}\mathrm{Ext}^1(M,N).$ Given $\bbc
Q$-modules $U, V$ and an injective module $J$, define
$\mathrm{Hom}_{\bbc Q}(N, \tau M)_{\str{V}\oplus \str{U}\oplus
J[-1]}$ is the set
$$
\{f\in \mathrm{Hom}_{\bbc Q}(N, \tau M)\mid \mathrm{Ker}(f)\in
\str{V}, \mathrm{Coker}(f)=\tau U'\oplus J \mbox{ for some } U'\in
\str{U}\}
$$
\end{subsection}
For any projective $\bbc Q$-module $P$, let $I=\mathrm{DHom}_{\bbc
Q}(P,\bbc Q).$  Then there exist two finite partitions
$$
\hom(M, I)=\bigsqcup_{I', V\in
S(\ud_1(I'))}\hom(M,I)_{\str{V}\oplus I'[-1]},
$$
$$
\hom(P, M)=\bigsqcup_{P', U\in
S(\ud_2(P'))}\hom(P,M)_{P'[1]\oplus\str{U}},
$$
where
$\ud_1(I')=\mathrm{\underline{dim}}I+\mathrm{\underline{dim}}I'-\mathrm{\underline{dim}}M$,
$\ud_2(P')=\mathrm{\underline{dim}}P+\mathrm{\underline{dim}}P'-\mathrm{\underline{dim}}M$,
$$\hom(M,I)_{\str{V}\oplus I'[-1]}=\{f\in \hom(M, I)\mid
\mathrm{Ker}(f)\in \str{V}, \mathrm{Coker}(f)=I' \} $$ and $$
\hom(P,M)_{P'[1]\oplus\str{U}}=\{g\in \hom(P,M)\mid
\mathrm{Ker}(g)=P', \mathrm{Coker}(g)\in \str{U}\}. $$

\begin{Thm}\cite{XiaoXu}\cite{Xu}\label{XX}
Let $Q$ be an acyclic quiver. Then

\nd (1) for any $\bbc Q$-modules $M,N$ such that $M$ contains no
projective summand, we have
$$\hspace{0cm}\mathrm{dim}_{\bbc}\mathrm{Ext}^1_{\bbc Q}(M,N)X_{M} X_{N} =\sum_{L\in S(\ue)}\chi(\mathbb{P}\mathrm{Ext}^{1}_{\bbc Q}(M,N)_{\str{L}})X_{L}$$
$$
+\sum_{I, \ud_1,\ud_2}\sum_{ V\in S(\ud_1),U\in
S(\ud_2)}\chi(\mathbb{P}\mathrm{Hom}_{\bbc Q}(N,\tau
M)_{\str{V}\oplus \str{U}\oplus I[-1]})X_{V\oplus U\oplus I[-1]}
$$
where $\ue=\mathrm{\underline{dim}}M+\mathrm{\underline{dim}} N$
and

\nd (2) for any $\bbc Q$-module $M$ and projective $\bbc Q$-module
$P$, we have
 $$\mathrm{dim}_{\bbc}\mathrm{Hom}_{\bbc Q}(P,M)X_{M}X_{P[1]}=\sum_{I',V\in S(\ud_1(I'))}
 \chi(\mathbb{P}\mathrm{Hom}_{\bbc Q}(M,I)_{\str{V}\oplus I'[-1]
})X_{V\oplus I'[-1]}$$$$\hspace{4.3cm}+\sum_{P', U\in
S(\ud_2(P'))}\chi(\mathbb{P}\mathrm{Hom}_{\bbc
Q}(P,M)_{P'[1]\oplus \str{U}})X_{P'[1]\oplus U}
 $$
where  $I=\mathrm{DHom}_{\bbc Q}(P,\bbc Q),$ $I'$ is injective,
and $P'$ is projective.
\end{Thm}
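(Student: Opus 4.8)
The plan is to prove both formulas by unwinding the definition of the Caldero--Chapton map and reducing each to identities among Euler characteristics of quiver Grassmannians. On the right-hand side of (1), the first sum ranges over the middle terms $L\in S(\ue)$ of the short exact sequences $0\to N\to L\to M\to 0$ representing the classes of $\ext^1_{\bbc Q}(M,N)$, while the second sum is the homomorphism contribution indexed by the partition of $\hom_{\bbc Q}(N,\tau M)$ recorded just before the statement, whose middle objects $V\oplus U\oplus I[-1]$ are genuine objects of $\mc(Q)$ carrying shifted injective summands. The prefactor is explained by AR-duality: over the hereditary algebra $\bbc Q$ one has $\ext^1_{\bbc Q}(M,N)\cong D\hom_{\bbc Q}(N,\tau M)$, so both indexing spaces are projective spaces $\mathbb{P}^{\,d-1}$ with $d=\dim\ext^1_{\bbc Q}(M,N)$, and the first sum already carries total Euler-characteristic mass $d$ because $\bbe_{\ue}=\bigsqcup_{L\in S(\ue)}\str{L}$ partitions $\mathbb{P}\ext^1_{\bbc Q}(M,N)$ into the strata $\mathbb{P}\ext^1_{\bbc Q}(M,N)_{\str{L}}$. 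Part (2) is the companion case in which one factor is a shifted projective $P[1]$, so that $\hom_{\bbc Q}(P,M)$ replaces the extension space and the two partitions of $\hom(M,I)$ and $\hom(P,M)$ supply the middle-object data.

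First I would expand $X_M X_N$ using rule (2) of the map, so that the coefficient of each resulting monomial is a product $\chi(\mathrm{Gr}_{\ue'}(M))\,\chi(\mathrm{Gr}_{\underline f}(N))$. The crucial bookkeeping observation is that the exponent $\ue'R+(\udim M-\ue')R^{tr}-\udim M$ is linear in the Grassmannian dimension vector and in $\udim$; combined with the additivity of $\udim$ along a short exact sequence, this shows that the monomial attached to a pair (submodule of $M$ of dimension $\ue'$, submodule of $N$ of dimension $\underline f$) coincides with the monomial attached to a submodule of dimension $\ue'+\underline f$ of any middle term $L$. Hence, after collecting terms by monomial, it suffices to prove for each dimension vector an identity of Euler characteristics, equating $\dim\ext^1_{\bbc Q}(M,N)$ times the product of Grassmannian Euler characteristics of $M$ and $N$ with the weighted sum of Grassmannian Euler characteristics of the middle objects.

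The core step is this Euler-characteristic identity, which I would obtain from the Euler-characteristic version of Green's formula enhanced by a $\bbc^*$-action, in the form developed in \cite{XiaoXu}. The underlying geometry is the Caldero--Keller incidence correspondence: for $L$ the middle term of $0\to N\to L\to M\to 0$, the assignment $A\mapsto(\mathrm{im}(A\to M),\,A\cap N)$ defines a morphism from $\mathrm{Gr}(L)$ to $\coprod\mathrm{Gr}(M)\times\mathrm{Gr}(N)$ whose fibers encode exactly the extension data; stratifying the extension space $\ext^1_{\bbc Q}(M,N)$ by the type $\str{L}$ of its middle term and computing the Euler characteristics of these fibrations fiberwise converts the two projections into the two sides of the identity. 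The free $\bbc^*$-action on $\ext^1_{\bbc Q}(M,N)\setminus\{0\}$ is what produces the prefactor, since passing to $\mathbb{P}\ext^1_{\bbc Q}(M,N)=\mathbb{P}^{\,d-1}$ replaces the extension space by a variety of Euler characteristic $d=\dim\ext^1_{\bbc Q}(M,N)$; the failure of the naive product $\chi(\mathrm{Gr}(M))\chi(\mathrm{Gr}(N))$ to agree with the module middle terms is precisely the defect measured by $\hom_{\bbc Q}(N,\tau M)$, yielding the second sum.

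For part (2), I would run the same machinery with $\hom_{\bbc Q}(P,M)$ in place of $\ext^1$ and with the two finite partitions of $\hom(M,I)$ and $\hom(P,M)$ recorded before the theorem, so that the shifted injective and projective summands $I'[-1]$ and $P'[1]$ appear as middle-object data indexed by $S(\ud_1(I'))$ and $S(\ud_2(P'))$. I expect the main obstacle to be the geometric bookkeeping in the core step: proving that the Euler characteristics of the incidence varieties genuinely factor as the claimed products over each stratum $\str{L}$ (equivalently that the relevant projections are multiplicative for $\chi$ along their fibers), isolating the contribution of the split class so that the prefactors $\dim\ext^1_{\bbc Q}(M,N)$ and $\dim\hom_{\bbc Q}(P,M)$ emerge with exactly the right multiplicity, and ensuring that the characters $X_{V\oplus U\oplus I[-1]}$ and $X_{P'[1]\oplus U}$ are produced correctly via the constructible decomposition into $\str{L}$-strata. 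This is precisely where the $\bbc^*$-enhanced Green's formula carries the weight of the argument.
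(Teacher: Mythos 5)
The paper does not prove this theorem at all: it is quoted from \cite{XiaoXu} and \cite{Xu}, and your outline --- expanding $X_MX_N$ via the Caldero--Chapoton map, matching monomials through the linearity of the exponent $\ue R+(\underline{\mathrm{dim}}M-\ue)R^{tr}-\underline{\mathrm{dim}}M$ along short exact sequences, and reducing to an Euler-characteristic identity established by the $\bbc^*$-equivariant (degenerate) Green's formula together with the Caldero--Keller incidence stratification by the strata $\str{L}$, with the prefactor accounted for by $\mathrm{Ext}^1_{\bbc Q}(M,N)\cong D\mathrm{Hom}_{\bbc Q}(N,\tau M)$, which is valid precisely because $M$ has no projective summands --- is exactly the strategy of those references, for part (2) as well. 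So your proposal takes essentially the same route as the cited source; the only caveat is that it invokes the $\bbc^*$-enhanced Green's formula of \cite{XiaoXu} as a black box, and that is where the real work of the theorem resides.
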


We have the following simplified version of Theorem \ref{XX}.
\begin{Thm}\cite{Palu}\cite{DX2}\label{unified}
Let $kQ$ be an acyclic quiver and $\mc (Q)$ be the cluster category
of $kQ$. Then for $M, N\in \mc(Q)$, there exists a finite subset
$\mathcal{Y}$ of the middle term of extensions in $Ext^1_{\mc(Q)}(M,
N)$ such that  if $\mathrm{Ext}^1_{\mc}(M, N)\neq 0$, we have
$$
\chi(\mathbb{P}\mathrm{Ext}^1_{\mc}(M, N))X_MX_N=\sum_{Y\in
\mathcal{Y}}(\chi(\mathbb{P}\mathrm{Ext}^1_{\mc(Q)}(M, N)_{\langle
Y\rangle})+\chi(\mathbb{P}\mathrm{Ext}^1_{\mc(Q)}(N, M)_{\langle
Y\rangle}))X_Y.
$$
\end{Thm}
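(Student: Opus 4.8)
The plan is to deduce this symmetric formula from the two-part formula of Theorem~\ref{XX} by passing from the module category to the cluster category via its $2$-Calabi-Yau structure. Recall that $\mc(Q)=\md^b(Q)/F$ with $F=[1]\circ\tau^{-1}$ is $2$-Calabi-Yau, so there is a bifunctorial isomorphism $\mathrm{Ext}^1_{\mc(Q)}(M,N)\cong D\,\mathrm{Ext}^1_{\mc(Q)}(N,M)$; in particular $\mathrm{dim}_{\bbc}\mathrm{Ext}^1_{\mc(Q)}(M,N)=\mathrm{dim}_{\bbc}\mathrm{Ext}^1_{\mc(Q)}(N,M)$, which is exactly the symmetry that makes the right-hand side of the asserted identity balanced. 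The second ingredient is the orbit-category computation: for $\bbc Q$-modules $M,N$ one has $\mathrm{Ext}^1_{\mc(Q)}(M,N)\cong\mathrm{Ext}^1_{\bbc Q}(M,N)\oplus\mathrm{Hom}_{\bbc Q}(M,\tau N)$, and by the Auslander-Reiten formula $\mathrm{Hom}_{\bbc Q}(M,\tau N)\cong D\,\mathrm{Ext}^1_{\bbc Q}(N,M)$. Thus the single cluster extension space splits into the two module-theoretic pieces that drive the two summands of Theorem~\ref{XX}.

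First I would reduce to cases using $X_{M\oplus N}=X_MX_N$ together with the canonical decomposition $M=M_0\oplus P_M[1]$, so that it suffices to treat $M,N$ genuine modules and, separately, the situations in which shifted projectives occur. In the module case the strategy is to apply Theorem~\ref{XX}(1) both to the ordered pair $(M,N)$ and to $(N,M)$ and add the two identities. Since $X_MX_N=X_NX_M$, the two left-hand coefficients combine into $\mathrm{dim}_{\bbc}\mathrm{Ext}^1_{\bbc Q}(M,N)+\mathrm{dim}_{\bbc}\mathrm{Ext}^1_{\bbc Q}(N,M)=\mathrm{dim}_{\bbc}\mathrm{Ext}^1_{\mc(Q)}(M,N)=\chi(\mathbb{P}\mathrm{Ext}^1_{\mc(Q)}(M,N))$, matching the left-hand side of the asserted identity. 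On the right, the first sum of Theorem~\ref{XX} produces the middle terms $L$ that are themselves modules, while the second ($\mathrm{Hom}$) sum produces the middle objects $V\oplus U\oplus I[-1]$ carrying shifted summands; under the orbit-category dictionary these are precisely the two types of middle term $Y$ occurring in triangles $N\to Y\to M\to N[1]$ in $\mc(Q)$.

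The finite index set $\mathcal{Y}$ is then taken to be the finite set of isomorphism classes of such middle terms, finiteness being inherited from the stratification $\bbe_{\ue}=\bigsqcup_{L\in S(\ue)}\str{L}$ into constructible pieces established in Section~2. The cases involving shifted projectives are handled the same way, now invoking Theorem~\ref{XX}(2) and translating the $P[1]$- and $I[-1]$-contributions through the orbit category; here the Calabi-Yau symmetry again identifies the $\mathrm{Hom}_{\bbc Q}(P,M)$ coefficient with the relevant cluster extension dimension, so the two-part bookkeeping of Theorem~\ref{XX} collapses to the single symmetric expression.

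The main obstacle is the precise stratum-by-stratum matching of Euler characteristics: I must show that for each middle term $Y$ the coefficient $\chi(\mathbb{P}\mathrm{Ext}^1_{\mc(Q)}(M,N)_{\str{Y}})+\chi(\mathbb{P}\mathrm{Ext}^1_{\mc(Q)}(N,M)_{\str{Y}})$ equals the total contribution of $Y$ coming from the two applications of Theorem~\ref{XX}. This requires checking that the decomposition $\mathrm{Ext}^1_{\mc(Q)}(M,N)\cong\mathrm{Ext}^1_{\bbc Q}(M,N)\oplus D\,\mathrm{Ext}^1_{\bbc Q}(N,M)$ is compatible with the stratification by isomorphism type of the middle term, and that the $\bbc^*$-action used to form the projectivizations $\mathbb{P}$ is respected by the Auslander-Reiten/Calabi-Yau duality, so that no term is counted twice and none is lost. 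Verifying this compatibility, rather than any new geometric input, is where the real work of the argument lies.
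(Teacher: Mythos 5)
The paper does not actually prove this statement: Theorem~\ref{unified} is imported verbatim from \cite{Palu} and \cite{DX2}, so there is no internal proof to compare against. Your strategy --- apply Theorem~\ref{XX}(1) to the two ordered pairs $(M,N)$ and $(N,M)$, add, and repackage via the decomposition $\mathrm{Ext}^1_{\mc(Q)}(M,N)\cong\mathrm{Ext}^1_{\bbc Q}(M,N)\oplus \mathrm{Hom}_{\bbc Q}(M,\tau N)$ --- is essentially the derivation carried out in \cite{DX2} (Palu's proof in \cite{Palu} is different, working directly with Euler characteristics of varieties of triangles in a general 2-Calabi-Yau category with cluster-tilting object). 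However, as a proof your text has two genuine holes. First, the left-hand-side bookkeeping $\dim\mathrm{Ext}^1_{\bbc Q}(M,N)+\dim\mathrm{Ext}^1_{\bbc Q}(N,M)=\chi(\mathbb{P}\mathrm{Ext}^1_{\mc(Q)}(M,N))$ rests on $\mathrm{Hom}_{\bbc Q}(M,\tau N)\cong D\mathrm{Ext}^1_{\bbc Q}(N,M)$; the Auslander--Reiten formula only gives this modulo maps factoring through injectives, which for a hereditary algebra vanish exactly when $\tau N$ has no injective direct summand, i.e.\ when $N$ has no projective direct summand. So the ``add the two formulas'' step requires both $M$ and $N$ projective-free, and your proposed reduction via $X_{M\oplus N}=X_MX_N$ does not supply the missing cases: knowing the identity for $(M_1,N)$ and $(M_2,N)$ does not yield it for $(M_1\oplus M_2,N)$, because the stratification of $\mathrm{Ext}^1_{\mc(Q)}(M_1\oplus M_2,N)$ by isomorphism class of middle term is not the product of the stratifications of the summands. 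The mixed cases (a module with projective summands, or an object with both a module part and a $P[1]$ part) therefore remain unproved in your outline.

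Second, and more seriously, the step you yourself identify as ``where the real work lies'' is exactly the theorem's content, and you do not carry it out. One must prove, for each middle term $Y$, that
$\chi(\mathbb{P}\mathrm{Ext}^1_{\mc(Q)}(M,N)_{\langle Y\rangle})$ equals the sum of $\chi(\mathbb{P}\mathrm{Ext}^1_{\bbc Q}(M,N)_{\langle Y\rangle})$ and the Euler characteristic of the corresponding stratum of $\mathbb{P}\mathrm{Hom}_{\bbc Q}(M,\tau N)$. This needs two nontrivial geometric facts: that on the locus where the $\mathrm{Hom}_{\bbc Q}(M,\tau N)$-component $f$ of a class $(\epsilon,f)$ is nonzero, the middle term of the associated triangle in $\mc(Q)$ is governed by $\mathrm{Ker}(f)$ and $\mathrm{Coker}(f)$ alone; and that the projection forgetting the $\epsilon$-coordinate restricts to an affine fibration on each stratum, so that Euler characteristics multiply (this is where the $\bbc^*$-action and Green's-formula machinery of \cite{XiaoXu} and \cite{Xu} enter). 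Calling this ``no new geometric input'' understates it: without these two lemmas the asserted identity is not established, so the proposal is an accurate plan of attack rather than a proof.
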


\begin{subsection}{}
Let us illustrate the cluster multiplication theorem by the
following example. Let $Q$ be a tame quiver with minimal imaginary
root $\delta$. Assume there is a sink $e\in Q_0$ such that
$\delta_e=1.$ Let $P_e$ be simple projective $\bbc Q$-module at $e.$
There exists unique preinjective module $I$ with dimension vector
$\delta-\mathrm{\underline{dim}}P_e.$ Then we have
$\mathrm{dim}_{\bbc}\mathrm{Ext}^1(I, P_e)=2.$ The set of
indecomposable regular $\bbc Q$-modules consists of tubes indexed by
the projective line $\mathbb{P}^1.$ Let $\mathcal{T}_1, \cdots,
\mathcal{T}_t$ be all non-homogeneous tube. Note that $t\leq 3$. Any
$\bbc Q$-module $L$ with $\mathrm{Ext}^1(I, P_e)_{\str{L}}\neq 0$
belongs to a tube. Conversely, for any tube $\mathcal{T},$ up to
isomorphism, there is unique $L\in \mathcal{T}$ such that
$\mathrm{Ext}^1(I, P_e)_{\str{L}}\neq 0$.
 Let $\mathcal{T}$ and $\mathcal{T'}$ be two homogeneous tubes and
$E, E'$ be the regular simple modules in $\mathcal{T}$ and
$\mathcal{T'}$, respectively. In Section 4.1, we will prove
$\chi(Gr_{\ue}(E))=\chi(Gr_{\ue}(E'))$ for any $\ue.$ Using this
result, we have $X_{E}=X_{E'}.$ By Theorem \ref{XX}, we obtain
$$
2X_{P_e}X_{I}=(2-t)X_{E}+\sum_{i=1}^tX_{E_i}+\sum_{I', U\in
S(\ud(I'))}\chi(\mathbb{P}\mathrm{Hom}(P_e, \tau I)_{\str{U}\oplus
I'[-1]})X_{U\oplus I'[-1]}
$$
where $E$ is any regular simple module with
$\mathrm{\underline{dim}}E=\delta$ and $E_i$ is the unique regular
module in $\mathcal{T}_i$ such that $\mathrm{Ext}^1(I,
P_e)_{E_i}\neq 0.$ Here $I'$ is an injective $\bbc Q$-module and
$\tau(\ud'(I'))=\mathrm{\underline{dim}}\tau
I-\mathrm{\underline{dim}}P_e-\mathrm{\underline{dim}}I'.$
\end{subsection}

\begin{subsection}{}We give some variants of the cluster multiplication theorem.
In fact, we will mainly meet the cases in the following theorems
when we construct integral bases of the cluster algebras of affine
types.
\begin{Lemma}\cite{BMRRT}
Let $Q$ be an acyclic quiver and $\mc(Q)$ be the cluster category
associated to $Q$. Let $M$ and $N$ be indecomposable $\bbc
Q$-modules. Then
$$
\mathrm{Ext}^1_{\mc(Q)}(M,N)\cong \mathrm{Ext}^1_{\bbc
Q}(M,N)\oplus \mathrm{Hom}_{\bbc Q}(M, \tau N).
$$
\end{Lemma}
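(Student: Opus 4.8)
The plan is to compute the extension group directly from the description of the cluster category as the orbit category $\mc(Q)=\md^b(Q)/F$ with $F=[1]\circ\tau^{-1}$. First I would invoke the fundamental Hom-formula for triangulated orbit categories that underlies the construction in \cite{BMRRT}: for objects $X,Y$ of $\md^b(Q)$, viewed in $\mc(Q)$, one has
$$
\mathrm{Hom}_{\mc(Q)}(X,Y)=\bigoplus_{i\in\bbz}\mathrm{Hom}_{\md^b(Q)}(X,F^iY),
$$
the sum being finite. Applying this to $\mathrm{Ext}^1_{\mc(Q)}(M,N)=\mathrm{Hom}_{\mc(Q)}(M,N[1])$, and using that $[1]$ and $\tau^{-1}$ commute so that $F^iN[1]=\tau^{-i}N[i+1]$, reduces the statement to identifying the nonzero terms among $\mathrm{Hom}_{\md^b(Q)}(M,\tau^{-i}N[i+1])$ for $i\in\bbz$.

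The key input is that $\bbc Q$ is hereditary, so in $\md^b(Q)$ every indecomposable is a shift of a module and, for modules $A,B$, the space $\mathrm{Hom}_{\md^b(Q)}(A,B[m])$ vanishes unless $m\in\{0,1\}$. I would then read off the two surviving terms. The term $i=0$ gives $\mathrm{Hom}_{\md^b(Q)}(M,N[1])=\mathrm{Ext}^1_{\bbc Q}(M,N)$, and the term $i=-1$ gives $F^{-1}N[1]=\tau N$, so that $\mathrm{Hom}_{\md^b(Q)}(M,\tau N)=\mathrm{Hom}_{\bbc Q}(M,\tau N)$ (with the usual convention that $\tau$ annihilates projective summands, consistent with $\tau P_j=I_j[-1]$ landing in cohomological degree $-1$).

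The main work is to show that all remaining terms vanish, which amounts to a bookkeeping of cohomological degrees under the derived AR-translation. Here I would use that $\tau^{-1}$ sends noninjective modules to modules and each injective $I_j$ to $P_j[1]$, so that for $i\geq 1$ the object $\tau^{-i}N$ is a module placed in some cohomological degree $s\geq 0$; consequently $F^iN[1]$ sits in degree $s+i+1\geq 2$ and admits no morphism from the module $M$. Dually, $\tau$ sends nonprojective modules to modules and each projective $P_j$ to $I_j[-1]$, so for $i\leq -2$ the object $F^iN[1]$ sits in degree $\leq i+1\leq -1$, again forcing the Hom to vanish. I expect this degree accounting---in particular the correct tracking of the $\pm 1$ shifts incurred when the iterated translates cross the projective or injective modules---to be the only delicate point. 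Assembling the two surviving summands then yields the claimed direct-sum decomposition.
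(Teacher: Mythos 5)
Your proof is correct; the paper itself gives no argument for this lemma (it is quoted from \cite{BMRRT}), and your computation via the orbit-category formula $\mathrm{Hom}_{\mc(Q)}(M,N[1])=\bigoplus_{i}\mathrm{Hom}_{\md^b(Q)}(M,\tau^{-i}N[i+1])$, with the hereditary vanishing of $\mathrm{Ext}^{\ge 2}$ and of negative-degree Homs killing all terms except $i=0$ and $i=-1$, is exactly the standard argument given in \cite{BMRRT}. The degree bookkeeping across projectives and injectives is handled correctly, including the convention that the $i=-1$ term vanishes when $N$ is projective.
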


\begin{Thm}\cite{CC}\label{AR formula}
Let $Q$ be an acyclic quiver and  $M$ any indecomposable
non-projective $\bbc Q$-module, then
$$
X_{M}X_{\tau M}=1+X_{E}
$$
where $E$ is the middle term of the Auslander-Reiten sequence
ending in $M$.
\end{Thm}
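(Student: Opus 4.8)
The plan is to reduce the statement to a single geometric identity about submodule Grassmannians and then extract that identity from the almost-split property of the Auslander--Reiten sequence. Since $M$ is a non-projective indecomposable, $\tau M$ is again a genuine $\bbc Q$-module, and so is the middle term $E$ of the sequence $0\to \tau M\to E\to M\to 0$. First I would apply the multiplicativity rule $X_{M}X_{\tau M}=X_{M\oplus\tau M}$, so that it suffices to prove $X_{M\oplus\tau M}=1+X_{E}$. The short exact sequence gives $\mathrm{\underline{dim}}E=\mathrm{\underline{dim}}M+\mathrm{\underline{dim}}\tau M=\mathrm{\underline{dim}}(M\oplus\tau M)=:d$, so when I expand both $X_{M\oplus\tau M}$ and $X_{E}$ by the reformulation rule (2), the monomial $x^{\ue R+(d-\ue)R^{tr}-d}$ attached to a fixed dimension vector $\ue$ is literally the same for the two modules. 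Hence the whole problem collapses to comparing Euler characteristics of Grassmannians: I must show
\begin{equation*}
\chi(\mathrm{Gr}_{\ue}(M\oplus\tau M))-\chi(\mathrm{Gr}_{\ue}(E))=\delta_{\ue,\ue_0},
\end{equation*}
where $\ue_0$ is the relevant dimension vector whose attached monomial is $x^{0}=1$; summing over $\ue$ then produces exactly the constant $1$.

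To obtain this identity I would view $E$ and $M\oplus\tau M$ as two fibres of one family. The Auslander--Reiten sequence determines a class in $\mathrm{Ext}^1_{\bbc Q}(M,\tau M)$, and rescaling it by $t\in\bbc$ yields a flat family $\mathcal E\to\mathbb{A}^{1}$ of $\bbc Q$-modules with $\mathcal E_{t}\cong E$ for $t\neq 0$ and $\mathcal E_{0}\cong M\oplus\tau M$, the split extension. Forming the relative Grassmannian $\mathrm{Gr}_{\ue}(\mathcal E/\mathbb{A}^{1})\to\mathbb{A}^{1}$ and using that $\chi$ is constant on the generic locus, the difference $\chi(\mathrm{Gr}_{\ue}(M\oplus\tau M))-\chi(\mathrm{Gr}_{\ue}(E))$ becomes the Euler-characteristic jump between the special and the generic fibre. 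I would compute this jump through the $\bbc^{*}$-action rescaling the extension parameter, so that it is governed by the fixed-point locus of that action on the special fibre, i.e. by which submodules of $M\oplus\tau M$ fail to deform to submodules of $E$.

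The heart of the argument, and the step I expect to be the main obstacle, is to show that this jump equals exactly $1$ and is concentrated at the single $\ue_0$ whose monomial is $1$. This is precisely where the almost-split property enters: every homomorphism into $M$ that is not a split epimorphism factors through $p:E\to M$, and dually every non-split monomorphism out of $\tau M$ factors through $\tau M\to E$. I would translate this into the statement that a submodule $W\subseteq M\oplus\tau M$ deforms into a submodule of $E$ unless its projection to $M$ is all of $M$ while its intersection with $\tau M$ is too small --- a condition carved out by the almost-split property and met on exactly one extra orbit, contributing the lone $+1$. Carrying out the fibrewise Euler-characteristic comparison, and checking that no other $\ue$ contributes, is the technical core; the bookkeeping is delicate precisely because for affine $Q$ the form $\ue\mapsto\ue(R-R^{tr})$ is degenerate, so one cannot separate monomials by degree and must keep the comparison at the level of each fixed $\ue$. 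I note that one could instead feed the pair $(M,\tau M)$ directly into Theorem \ref{XX}(1), but the resulting expression carries additional $\mathrm{Hom}$-terms and higher middle-term contributions that cancel only after invoking the formula for modules of smaller quasi-length; the degeneration argument above, which is the route of \cite{CC}, is therefore the cleaner one.
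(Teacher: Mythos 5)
The paper does not actually prove this statement; it is imported verbatim from \cite{CC}, so the only available comparison is with the argument given there. Your skeleton --- use multiplicativity to reduce to showing $\chi(\mathrm{Gr}_{\ue}(M\oplus\tau M))-\chi(\mathrm{Gr}_{\ue}(E))=\delta_{\ue,\ue_0}$ for each fixed $\ue$, then locate the discrepancy via the almost-split property --- is the right one and is essentially what \cite{CC} does. But there is a genuine gap: you never prove that the exceptional monomial is $1$. The almost-split property pins down the missing stratum concretely: under $W\mapsto(p(W),\,W\cap\tau M)$ the only pair $(A,B)$, $A\subseteq M$, $B\subseteq\tau M$, that is not realized by a submodule of $E$ is $(M,0)$ (your ``intersection with $\tau M$ too small'' must be ``intersection equal to zero''; every other pair is hit, by lifting $A\hookrightarrow M$ through $p$ when $A\neq M$ and by factoring $\tau M\to\tau M/B$ through $E$ when $B\neq 0$). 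So the jump sits at $\ue_0=\underline{\mathrm{dim}}\,M$, and you must still verify that the monomial attached to this $\ue_0$ in the expansion of $X_{M\oplus\tau M}$ is $x^0$, i.e. that $-\langle\underline{\mathrm{dim}}M,s_i\rangle-\langle s_i,\underline{\mathrm{dim}}\tau M\rangle=0$ for all $i$. This is the Euler-form identity for the AR translate ($\langle a,s_i\rangle=-\langle s_i,\tau a\rangle$ for $a$ non-projective) and is a real input; writing ``the relevant dimension vector whose attached monomial is $x^0=1$'' assumes exactly what has to be shown, namely that the combinatorial location of the constant term coincides with the homological location of the missing stratum.

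A secondary problem is the mechanism you propose for the Euler-characteristic comparison. Degenerating $E$ to $M\oplus\tau M$ over $\mathbb{A}^1$ and trying to read off ``the jump between special and generic fibre'' from a $\bbc^*$-fixed-point analysis is not a computation that general theory hands you --- Euler characteristics of fibres of proper flat families do jump, and quantifying the jump forces you back to an explicit stratification anyway. (It is also not the route of \cite{CC}, contrary to your closing remark.) The clean argument is the direct constructible map $\mathrm{Gr}_{\ue}(E)\to\coprod_{\ue_1+\ue_2=\ue}\mathrm{Gr}_{\ue_1}(M)\times\mathrm{Gr}_{\ue_2}(\tau M)$, $W\mapsto(p(W),W\cap\tau M)$, whose nonempty fibres are affine spaces; combined with the two bullet points above this yields the identity stratum by stratum with no recourse to families.
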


\begin{Thm}\cite{CK2006}\label{CK}
Let $Q$ be an acyclic quiver and  $M$,$N$ be any two objects in
$\mathcal{C}(Q)$ such that $\mathrm{dim}_{\bbc}\
\mathrm{Ext}^{1}_{\mathcal{C}(Q)}(M,N)=1$, then
$$
X_{M}X_{N}=X_{B}+X_{B'}
$$
where $B$ and $B'$ are the unique objects such that there exists
non-split triangles
$$M\longrightarrow B\longrightarrow N\longrightarrow M[1]\ and\ N\longrightarrow B'\longrightarrow M\longrightarrow N[1].$$
\end{Thm}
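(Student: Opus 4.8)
The plan is to specialize the unified cluster multiplication formula (Theorem \ref{unified}) to the hypothesis $\mathrm{dim}_{\bbc}\mathrm{Ext}^1_{\mc(Q)}(M,N)=1$. First I would invoke the $2$-Calabi--Yau property of the cluster category $\mc(Q)$, i.e. the functorial duality $\mathrm{Ext}^1_{\mc(Q)}(M,N)\cong D\,\mathrm{Ext}^1_{\mc(Q)}(N,M)$, to conclude that $\mathrm{dim}_{\bbc}\mathrm{Ext}^1_{\mc(Q)}(N,M)=1$ as well. Thus both extension spaces are one-dimensional, so their projectivizations $\mathbb{P}\mathrm{Ext}^1_{\mc(Q)}(M,N)$ and $\mathbb{P}\mathrm{Ext}^1_{\mc(Q)}(N,M)$ are single reduced points, each of Euler characteristic $1$. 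Tracing the connecting morphisms, the unique (up to scalar) nonzero class in $\mathrm{Ext}^1_{\mc(Q)}(N,M)=\hom_{\mc(Q)}(N,M[1])$ yields the non-split triangle $M\to B\to N\to M[1]$, determining $B$ up to isomorphism, while the nonzero class in $\mathrm{Ext}^1_{\mc(Q)}(M,N)=\hom_{\mc(Q)}(M,N[1])$ yields $N\to B'\to M\to N[1]$ and hence $B'$.

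Next I would feed this data into Theorem \ref{unified}. Since $\chi(\mathbb{P}\mathrm{Ext}^1_{\mc(Q)}(M,N))=1$, the left-hand side collapses to $X_M X_N$. For the right-hand side I would analyze the two stratifications: because $\mathbb{P}\mathrm{Ext}^1_{\mc(Q)}(M,N)$ is a single point whose associated middle term is $B'$, the stratum $\mathbb{P}\mathrm{Ext}^1_{\mc(Q)}(M,N)_{\str{Y}}$ has Euler characteristic $1$ exactly when $\str{Y}=\str{B'}$ and $0$ otherwise; symmetrically, $\mathbb{P}\mathrm{Ext}^1_{\mc(Q)}(N,M)_{\str{Y}}$ contributes $1$ precisely when $\str{Y}=\str{B}$. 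Summing over $Y\in\mathcal{Y}$ and using that the generalized cluster variable $X_Y$ is constant on each class $\str{\cdot}$ (it depends only on the Grassmannian Euler characteristics), the sum telescopes to $X_{B'}+X_B$. Equating the two sides gives $X_M X_N = X_B + X_{B'}$, as claimed.

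The main point to verify carefully is the stratification bookkeeping: I must match the two extension directions with the correct middle terms (the $\mathrm{Ext}^1(M,N)$-term produces $B'$ and the $\mathrm{Ext}^1(N,M)$-term produces $B$) and confirm that each nonempty stratum contributes Euler characteristic exactly $1$, so that no spurious multiplicities enter. A degenerate subtlety worth recording is the possibility that $B$ and $B'$ fall in the same class $\str{B}=\str{B'}$; in that event $X_B=X_{B'}$ and a single index $Y$ receives both contributions, but the identity $X_M X_N = X_B + X_{B'}$ persists. Beyond Theorem \ref{unified}, the only genuine input is the $2$-Calabi--Yau symmetry, which guarantees that the second extension space is again one-dimensional and that the two displayed triangles exist and are unique.
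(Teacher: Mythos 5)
Your derivation is correct, but it should be noted that the paper itself offers no proof of this statement: Theorem \ref{CK} is imported verbatim from Caldero--Keller \cite{CK2006}, so there is no internal argument to compare against. What you do instead is deduce it as a special case of the unified multiplication formula (Theorem \ref{unified}): the $2$-Calabi--Yau duality $\mathrm{Ext}^1_{\mc(Q)}(M,N)\cong D\,\mathrm{Ext}^1_{\mc(Q)}(N,M)$ makes both projectivized extension spaces single points, each point lies in exactly one stratum $\langle Y\rangle$ (the one containing its middle term, $B'$ for the $\mathrm{Ext}^1(M,N)$-direction and $B$ for the $\mathrm{Ext}^1(N,M)$-direction), and since $X_Y$ is constant on each class $\langle Y\rangle$ the right-hand side collapses to $X_{B'}+X_B$; your handling of the degenerate case $\langle B\rangle=\langle B'\rangle$ is also right. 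This is a genuinely different route from the source: the original Caldero--Keller proof is a direct and considerably more delicate analysis of submodule Grassmannians and the fibres of the maps attaching submodules of $B$ and $B'$ to pairs of submodules of $M$ and $N$, carried out before any general multiplication formula existed. Your approach buys brevity and conceptual clarity at the price of resting on the much heavier Theorem \ref{unified}; this is logically sound (the proofs of the general formula by Xiao--Xu and by Palu do not presuppose the one-dimensional case), but one should be aware that in the historical development the implication ran the other way. The only point you leave implicit is that $X_Y$ depends only on $\underline{\mathrm{dim}}\,Y$ together with the Euler characteristics $\chi(\mathrm{Gr}_{\underline{e}}(Y))$, which is exactly what the equivalence classes $\langle\,\cdot\,\rangle$ record, so identifying the contribution of a stratum with $X_B$ or $X_{B'}$ is legitimate.
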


\end{subsection}
\section{Numerators of Laurent expansions in generalized cluster variables}
Let $Q$ be an acyclic quiver and $E_i[n]$ be the indecomposable
regular module with quasi-socle $E_i$ and quasi-length $n$. For any
$\bbc Q$-module $M$, we denoted by $d_{M,i}$ the $i$-th component of
$\underline{\mathrm{dim}} M$.
\begin{Definition}\label{p}
For $M, N\in \mc(Q)$ with
$\mathrm{\underline{dim}}M=(m_{1},\cdots,m_{n})$ and
$\mathrm{\underline{dim}}N=(r_{1},\cdots,r_{n})$, we write
$\mathrm{\underline{dim}}M\preceq \mathrm{\underline{dim}}N$ if
$m_{i}\leq r_{i}\ for\ 1\leq i\leq n$. Moreover, if there exists
some i such that $m_{i}< r_{i}$, then we write
$\mathrm{\underline{dim}}M\prec \mathrm{\underline{dim}}N.$

\end{Definition}

For any $\ud\in \bbz^{Q_0},$ define $\ud^{+}=(d^+_i)_{i\in Q_0}$
such that $d^+_i=d_i$ if $d_i>0$ and $d_i^+=0$ if $d_i\leq 0$ for
any $i\in Q_0.$ Dually, we set $\ud^-=\ud^+-\ud.$

By Theorem \ref{XX}, we have the following easy lemma.
\begin{Lemma}\label{induction}
With the above notation in Theorem \ref{XX}(1) and
$\mathrm{Ext}^1_{\bbc Q}(M,N)_{\str{L}}\neq 0$, we have $
\mathrm{\underline{dim}}(V\oplus U\oplus I[-1])\prec
\mathrm{\underline{dim}}(M\oplus N)=\mathrm{\underline{dim}}L$.

\end{Lemma}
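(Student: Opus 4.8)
The plan is to read off the assertion directly from the cluster multiplication formula in Theorem~\ref{XX}(1) and track dimension vectors through the two partition sums appearing there. Recall that the left-hand side $\mathrm{dim}_{\bbc}\mathrm{Ext}^1_{\bbc Q}(M,N)\,X_M X_N$ corresponds, via the Caldero-Chapton map, to objects of total dimension vector $\mathrm{\underline{dim}}M+\mathrm{\underline{dim}}N=\ue$, and the first sum on the right ranges over the middle terms $L$ of non-split extensions $0\to N\to L\to M\to 0$, all of which satisfy $\mathrm{\underline{dim}}L=\ue$ by additivity of the dimension vector on short exact sequences. So the claim concerns only the \emph{second} sum, indexed by injectives $I$ and by $V\in S(\ud_1)$, $U\in S(\ud_2)$, with terms $X_{V\oplus U\oplus I[-1]}$.

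First I would recall the meaning of the defining set $\mathrm{Hom}_{\bbc Q}(N,\tau M)_{\str{V}\oplus\str{U}\oplus I[-1]}$: a map $f$ contributes precisely when $\mathrm{Ker}(f)\in\str{V}$ and $\mathrm{Coker}(f)=\tau U'\oplus I$ for some $U'\in\str{U}$. The hypothesis $\mathrm{Ext}^1_{\bbc Q}(M,N)_{\str{L}}\neq0$ guarantees that $\mathrm{Ext}^1_{\bbc Q}(M,N)\neq0$, equivalently (by the exactness of the relevant long sequence, or by the interpretation used in \cite{XiaoXu}\cite{Xu}) that $\mathrm{Hom}_{\bbc Q}(N,\tau M)\neq0$, so the contributing maps $f$ are nonzero. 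The key computation is then to express $\mathrm{\underline{dim}}(V\oplus U\oplus I[-1])$ in terms of $\ue$. Using the extended dimension-vector convention $\mathrm{\underline{dim}}(I[-1])=\mathrm{\underline{dim}}\,\mathrm{soc}\,I$ (equivalently $-\mathrm{\underline{dim}}$ of the projective shift), together with the formulas $\ud_1=\mathrm{\underline{dim}}\,\tau M+\mathrm{\underline{dim}}V-\mathrm{\underline{dim}}N$ and $\ud_2$ defined analogously from the kernel/cokernel data of $f$, I would assemble the total dimension vector of $V\oplus U\oplus I[-1]$ from the exact sequence $0\to\mathrm{Ker}(f)\to N\to\tau M\to\mathrm{Coker}(f)\to0$ attached to $f$. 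Additivity along this four-term exact sequence gives $\mathrm{\underline{dim}}V+\mathrm{\underline{dim}}(\tau U\oplus I)=\mathrm{\underline{dim}}N+\mathrm{\underline{dim}}\,\tau M$, and after applying $\tau$ to translate $\tau M$ and $\tau U$ back to $M$ and $U$ this should collapse to $\mathrm{\underline{dim}}(V\oplus U)+\mathrm{\underline{dim}}(I[-1])=\ue$ up to the correction terms coming from $\tau$ acting on non-preprojective summands.

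The strict inequality, rather than equality, is where the real content lies, and I expect this to be the main obstacle. The point is that a \emph{nonzero} map $f\in\mathrm{Hom}_{\bbc Q}(N,\tau M)$ has a nontrivial image, so either $\mathrm{Ker}(f)\neq N$ or $\mathrm{Coker}(f)\neq\tau M$, and this forces a genuine drop in at least one coordinate when one passes from the naive total $\ue$ to the actual $\mathrm{\underline{dim}}(V\oplus U\oplus I[-1])$. Concretely I would argue that $\dim_{\bbc}\mathrm{Im}(f)>0$ and that this positive-dimensional image is exactly what is subtracted twice in the dimension bookkeeping (once from the kernel side measuring $V$ against $N$, once from the cokernel side), yielding $\mathrm{\underline{dim}}(V\oplus U\oplus I[-1])=\ue-c$ for some nonzero $c\in\bbn^{Q_0}$, hence $\prec\ue$. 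The care needed here is to handle the $\tau$-twists and the splitting off of the injective part $I$ and projective/injective shifted pieces correctly so that no coordinate accidentally increases; I would verify this componentwise using that $\tau$ preserves dimension vectors on regular and preprojective modules in the relevant range, and that the shift convention keeps all the nonprojective module contributions nonnegative. Once the net subtracted vector $c$ is shown to be nonzero in at least one coordinate, the definition of $\prec$ in Definition~\ref{p} is satisfied and the lemma follows.
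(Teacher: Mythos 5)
The paper offers no written proof of this lemma (it is asserted as an ``easy'' consequence of Theorem \ref{XX}), so your attempt can only be judged on its own merits. Your overall strategy is the right one: the first sum contributes objects of dimension vector exactly $\ue$, the content lies entirely in the second sum, and the strict drop must come from $\mathrm{Im}(f)\neq 0$ for the nonzero maps $f\in\mathrm{Hom}_{\bbc Q}(N,\tau M)$. The decomposition of the bookkeeping into a kernel-side and a cokernel-side contribution is also correct.

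However, there is a genuine gap in the cokernel-side step. You propose to pass from the four-term exact sequence $0\to V\to N\to\tau M\to\tau U'\oplus I\to 0$ to an inequality between $\mathrm{\underline{dim}}U$ and $\mathrm{\underline{dim}}M$ by ``applying $\tau$'' and invoking the claim that ``$\tau$ preserves dimension vectors on regular and preprojective modules in the relevant range.'' This claim is false: $\tau$ acts on dimension vectors by the Coxeter transformation, which is not the identity and is not even componentwise monotone --- for instance $\mathrm{\underline{dim}}\,\tau^{-1}P_i\neq\mathrm{\underline{dim}}P_i$ for preprojectives, and on a non-homogeneous tube $\tau$ permutes the quasi-simples $E_1,\dots,E_r$, which have pairwise distinct dimension vectors. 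Consequently the componentwise verification you propose would fail, and the inequality $\mathrm{\underline{dim}}\,\tau U'\preceq\mathrm{\underline{dim}}\,\tau M-\mathrm{\underline{dim}}\,\mathrm{Im}(f)$ cannot simply be ``translated back'' to $\mathrm{\underline{dim}}U\preceq\mathrm{\underline{dim}}M$. (There is also a slip earlier: the alternating sum along the four-term sequence gives $\mathrm{\underline{dim}}V+\mathrm{\underline{dim}}(\tau U'\oplus I)=\mathrm{\underline{dim}}N+\mathrm{\underline{dim}}\,\tau M-2\,\mathrm{\underline{dim}}\,\mathrm{Im}(f)$, not the equality you first write, though you implicitly correct this afterwards.) The way to close the gap is structural rather than numerical: applying the derived inverse translate $\tau^{-1}$ to $0\to\mathrm{Im}(f)\to\tau M\to\tau U'\oplus I\to 0$ yields an exact sequence of modules $0\to\nu^{-1}I\to\tau^{-1}\mathrm{Im}(f)\to M\to U'\to 0$, so $U'$ is a quotient of $M$ and hence $\mathrm{\underline{dim}}U\preceq\mathrm{\underline{dim}}M$ componentwise; combined with $V=\mathrm{Ker}(f)\subsetneq N$ (giving $\mathrm{\underline{dim}}V\prec\mathrm{\underline{dim}}N$ since $f\neq 0$) and $\mathrm{\underline{dim}}(I[-1])=-\mathrm{\underline{dim}}\,\mathrm{soc}\,I\preceq 0$, the three inequalities add up to the assertion of the lemma.
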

According to the definition of the Caldero-Chapton map, we consider
the Laurent expansions in generalized cluster variables
$X_{M}=\frac{P(x)}{\Pi_{1\leq i\leq n}x^{m_{i}}_{i}}$ for
$M\in\mathcal{C}(Q)$ such that the integral polynomial $P(x)$ in the
variables $x_i$  is not divisible by any $x_{i}.$ We define the
denominator vector of $X_{M}$ as $(m_{1},\cdots,m_{n})$ \cite{FZ1}.
The following theorem is called as the denominator theorem.
\begin{Thm}\cite{CK2006}\label{d}
Let Q be an acyclic quiver. Then for any object M in $\mc(Q)$, the
denominator vector of $X_{M}$ is $\mathrm{\underline{dim}}M.$
\end{Thm}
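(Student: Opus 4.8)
The plan is to establish the denominator theorem by induction on $\mathrm{\underline{dim}}M$ with respect to the partial order $\preceq$ introduced in Definition \ref{p}, reducing everything to the indecomposable case and then to the multiplication formula of Theorem \ref{XX}. First I would dispose of the reducible objects: since $X_{M\oplus N}=X_M X_N$, the denominator vector is additive on direct sums, so it suffices to prove the claim for each indecomposable object of $\mc(Q)$. The indecomposable objects split into the indecomposable $\bbc Q$-modules together with the shifted projectives $P_i[1]$ (by the identification $\mathrm{ind}\,\mc(Q)=\mathrm{ind}\,\bbc Q\sqcup\{P_i[1]\}$ recalled in the introduction). For $M=P_i[1]$ we have $X_{P_i[1]}=x_i$ directly from the reformulated Caldero-Chapton rules, whose denominator vector is $\mathrm{\underline{dim}}\,P_i[1]=-(0,\dots,1,\dots,0)$, matching the extended dimension-vector convention $\mathrm{\underline{dim}}M=\mathrm{\underline{dim}}M_0-(m_i)$.

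The heart of the argument is the indecomposable module case. For an indecomposable $\bbc Q$-module $M$ the Caldero-Chapton formula in rule (2) gives
\[
X_M=\sum_{\ue}\chi(\mathrm{Gr}_{\ue}(M))\,x^{\ue R+(\mathrm{\underline{dim}}M-\ue)R^{tr}-\mathrm{\underline{dim}}M},
\]
so I would rewrite $X_M=x^{-\mathrm{\underline{dim}}M}P(x)$ where $P(x)=\sum_{\ue}\chi(\mathrm{Gr}_{\ue}(M))\,x^{\ue R+(\mathrm{\underline{dim}}M-\ue)R^{tr}}$ has nonnegative exponents (each entry of $R$ is nonnegative, and $0\preceq\ue\preceq\mathrm{\underline{dim}}M$ on every Grassmannian). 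The content of the theorem is then precisely that $P(x)$ is \emph{not} divisible by any $x_i$, equivalently that for each $i$ some Grassmannian term contributes a monomial with zero $i$-th exponent, i.e. with $(\ue R+(\mathrm{\underline{dim}}M-\ue)R^{tr})_i=0$. The term $\ue=0$ always appears with coefficient $\chi(\mathrm{Gr}_0(M))=1$ and exponent $\mathrm{\underline{dim}}M\cdot R^{tr}$, and the term $\ue=\mathrm{\underline{dim}}M$ appears with coefficient $1$ and exponent $\mathrm{\underline{dim}}M\cdot R$; combined with the acyclicity of $Q$ one sees the lowest-order monomials cannot all carry a common factor $x_i$.

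The clean way to make this rigorous, and the route I would actually take, is by induction using the cluster multiplication formula. If $M$ is non-regular (preprojective or preinjective), then for suitable projective or simple modules the Auslander-Reiten formula of Theorem \ref{AR formula}, $X_M X_{\tau M}=1+X_E$, together with Lemma \ref{induction}, lets me reduce the denominator of $X_M$ to that of strictly smaller objects; since $\mathrm{\underline{dim}}E\prec\mathrm{\underline{dim}}M\oplus\mathrm{\underline{dim}}\tau M$, the inductive hypothesis controls the cancellations and pins down the denominator as $\mathrm{\underline{dim}}M$ exactly. The main obstacle is the regular case, where $\tau$ has finite order on each tube and the naive induction does not decrease the dimension vector; here I expect to need a direct analysis of the numerator $P(x)$ via the support of the Grassmannians $\mathrm{Gr}_{\ue}(M)$ for a regular $M$, showing that the extreme terms $\ue=0$ and $\ue=\mathrm{\underline{dim}}M$ already prevent divisibility by any $x_i$. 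I would handle this by verifying that for each vertex $i$ the exponent $(\mathrm{\underline{dim}}M\cdot R^{tr})_i$ and $(\mathrm{\underml{dim}}M\cdot R)_i$ cannot simultaneously exceed zero in a way that forces $x_i\mid P(x)$, using that $R+R^{tr}$ records the symmetrized Euler form and that $M$ is sincere or controlling the non-sincere case separately. Since this is a previously published result of Caldero-Keller (Theorem \ref{d} is cited to \cite{CK2006}), the cleanest exposition simply invokes that reference; the sketch above is the self-contained route one would follow if proving it from scratch.
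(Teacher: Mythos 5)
The first thing to say is that the paper contains no proof of this statement: Theorem \ref{d} is imported verbatim from Caldero--Keller \cite{CK2006} and used as a black box throughout, so your closing sentence (``the cleanest exposition simply invokes that reference'') is in fact exactly what the authors do. Judged as a self-contained argument, though, your sketch has a genuine gap at its central step. The reduction to indecomposables and the $P_i[1]$ case are fine, but the claim that the two extreme Grassmannian terms $\ue=0$ and $\ue=\mathrm{\underline{dim}}M$ already prevent $P(x)$ from being divisible by any $x_i$ is false at a vertex $i$ that is neither a sink nor a source: with $r_{ij}=\mathrm{dim}_{\bbc}\mathrm{Ext}^1(S_i,S_j)$, the $i$-th exponents of those two monomials are $\sum_{\alpha:i\to j}d_j$ and $\sum_{\alpha:j\to i}d_j$ respectively, and both are strictly positive as soon as $M$ is supported on some successor and some predecessor of $i$. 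What the theorem actually requires is, for each vertex $i$, a submodule $N\subseteq M$ with $N_j=0$ at every source of an arrow into $i$ and $N_j=M_j$ at every target of an arrow out of $i$, \emph{and} an argument that the Euler characteristics contributing to the resulting minimal monomial do not cancel (quiver Grassmannians of non-rigid modules may a priori have negative Euler characteristic). Neither the existence of such $N$ nor the non-cancellation is addressed, and this is precisely where the work in \cite{CK2006} lies.

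The inductive fallback you propose is also not sound as stated. From $X_MX_{\tau M}=1+X_E$ and knowledge of the denominator vectors of $X_{\tau M}$ and $X_E$ one cannot read off the denominator vector of $X_M$: a product of Laurent polynomials can have a smaller denominator than the sum of the factors' denominators unless one controls the numerators (for instance by knowing their constant terms are nonzero). Note that this is exactly the content of Lemma \ref{1} and Proposition \ref{2} in the paper, which are proved \emph{downstream} of Theorem \ref{d} and use it as an input, not the other way around; running the implication in your direction is circular. Finally, you concede the regular case outright. So the proposal is an outline whose two concrete mechanisms (extreme terms, AR-induction) both fail as given, and the correct comparison is simply that the paper, like your last sentence, defers the proof to \cite{CK2006}.
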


The orientation of a quiver $Q$ is called \emph{alternating} if
every vertex of $Q$ is a sink or a source. We note that there exists
an alternating orientation for a quiver of type
$\widetilde{A}_{p,p}, \widetilde{D}_n (n\geq 4)$ or $\widetilde{E}_m
(m=6,7,8)$. According to Theorem \ref{d}, we can prove the following
propositions.
\begin{Lemma}\label{1}
Let $Q$ be a tame quiver with the \emph{alternating} orientation. If
$M$ is either $P_i$ or $I_i$ for $1\leq i\leq n$, then
$X_{M}=\frac{P(x)}{x^{\mathrm{\underline{dim}}(M)}}$ where the
constant term of P(x) is 1.
\end{Lemma}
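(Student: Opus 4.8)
The plan is to separate the two assertions: that the denominator is exactly $x^{\underline{\mathrm{dim}}M}$, and that the numerator $P(x)$ has constant term $1$. The first is precisely the denominator theorem (Theorem \ref{d}) applied to $M$, which for $M=P_i$ or $M=I_i$ is an honest $\bbc Q$-module; it guarantees that $P(x):=X_M\,x^{\underline{\mathrm{dim}}M}$ is a polynomial in the $x_j$ not divisible by any $x_j$, so the denominator is as claimed. Thus the whole content is to pin down the coefficient of the monomial $1$ in $P(x)$.

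First I would write out, via the Caldero--Chapton formula, the exponent of $x_j$ in the monomial of $P(x)$ indexed by the dimension vector $\ue$ of a submodule of $M$. Multiplying the $\ue$-summand of $X_M$ by $x^{\underline{\mathrm{dim}}M}$, this exponent is $m_j-\langle \ue, s_j\rangle-\langle s_j, \underline{\mathrm{dim}}M-\ue\rangle$, where $\bd=(m_j)=\underline{\mathrm{dim}}M$. Expanding the Euler form $\langle \ud,\ue\rangle=\sum_i d_ie_i-\sum_{\az\in Q_1}d_{s(\az)}e_{t(\az)}$, this exponent simplifies to
$$ \sum_{\az:\,t(\az)=j} e_{s(\az)} \;+\; \sum_{\az:\,s(\az)=j}\bigl(m_{t(\az)}-e_{t(\az)}\bigr). $$
Since $0\le e_i\le m_i$ for a submodule, every summand here is non-negative, so the $x_j$-exponent vanishes for all $j$ exactly when every summand vanishes.

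The main step is to read off this vanishing condition using the alternating orientation, and I expect this bookkeeping to be the crux. For a sink $j$ only the first sum survives and forces $e_{s(\az)}=0$ for each (necessarily source) neighbour $s(\az)$ of $j$; for a source $j$ only the second sum survives and forces $e_{t(\az)}=m_{t(\az)}$ for each (necessarily sink) neighbour $t(\az)$ of $j$. Because $Q$ is connected, every source meets a sink and every sink meets a source, so the unique dimension vector $\ue^{\ast}$ contributing to the constant term is $e_i=0$ on all sources and $e_i=m_i$ on all sinks. Once this is established the rest is immediate: a submodule of $M$ with dimension vector $\ue^{\ast}$ must be $0$ on each source and all of $M_i$ on each sink, and any such graded subspace is automatically a subrepresentation (its source components being zero make every structure map restrict to the zero map), hence it is unique. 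Therefore $\mathrm{Gr}_{\ue^{\ast}}(M)$ is a single point, $\chi(\mathrm{Gr}_{\ue^{\ast}}(M))=1$, and the constant term of $P(x)$ equals $1$, as claimed. I note that this argument is insensitive to $M$ being projective or injective and in fact applies to any $\bbc Q$-module; we invoke it only for $P_i$ and $I_i$.
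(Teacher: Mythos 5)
Your proof is correct, and it takes a genuinely different route from the paper's. The paper argues by recursion: for a sink $i$ it feeds the exact sequence $0\to P_i\to I_i\to I'\to 0$ into the multiplication theorem to get $X_{\tau P_i}X_{P_i}=x^{\underline{\mathrm{dim}}\,\mathrm{soc}\,I'}+1$, reads off the constant term from $X_{\tau P_i}=x_i$, and for a source $i$ uses $0\to P'\to P_i\to I_i\to 0$ together with induction along the preprojective component (and dually for the $I_i$). You instead compute the exponent of $x_j$ in the $\ue$-summand of $X_M\,x^{\underline{\mathrm{dim}}M}$ in closed form, observe that the alternating orientation makes every summand of that exponent non-negative, and identify the unique dimension vector $\ue^{\ast}$ (zero on sources, full on sinks) whose Grassmannian is a point. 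I checked the exponent formula $\sum_{\az:\,t(\az)=j}e_{s(\az)}+\sum_{\az:\,s(\az)=j}(m_{t(\az)}-e_{t(\az)})$ against the Euler form and it is right; connectivity of a tame quiver guarantees every source is adjacent to a sink and conversely, so $\ue^{\ast}$ is indeed pinned down, and the corresponding subrepresentation is unique, so no cancellation among Euler characteristics can occur. Two remarks on what each approach buys. Your argument is more elementary (no multiplication formulas, no induction) and, as you note, applies verbatim to \emph{every} $\bbc Q$-module over an alternating quiver; combined with $X_{P_i[1]}=x_i$ and multiplicativity of $X$ on direct sums, it would yield the module case of Proposition \ref{2} directly, bypassing the case analysis there — the paper's recursive proof of Lemma \ref{1} is really the base case of the inductive machinery it reuses throughout Section 3. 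One small point of economy: your own computation already shows that all exponents of $X_M\,x^{\underline{\mathrm{dim}}M}$ are non-negative and that the constant term is $1$, which forces $P(x)$ to be a polynomial not divisible by any $x_j$; so the appeal to Theorem \ref{d} is not strictly needed for these modules, though it is harmless.
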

\begin{proof}
1) If $i$ is a sink point, we have the following short exact
sequence:
$$0\longrightarrow P_i\longrightarrow I_i\longrightarrow I'\longrightarrow 0$$
Then by  Theorem \ref{XX}, we have:
$$X_{\tau P_i}X_{P_i}=x^{\mathrm{\underline{dim}}\mathrm{soc} I'}+1$$
Thus the constant term of numerator in  $X_{P_i}$ as an
irreducible fraction of integral polynomials in the variables
$x_i$ is $1$ because of $X_{\tau P_i}=x_i.$

 If $i$ is a source point, we have the following short exact
sequence:
$$0\longrightarrow P'\longrightarrow P_i\longrightarrow I_i\longrightarrow 0$$
Similarly we have:
$$X_{\tau P_i}X_{P_i}=X_{P'}+1$$
Thus we can finish it by induction on $P'$.

2) For $X_{I_i}$, it is totally similar.
\end{proof}
Note that $X_{\tau {P_i}}=x_i=\frac{1}{x^{-1}_i}$ and
$\mathrm{\underline{dim}}(\tau {P_i})=(0,\cdots,0,-1,0,\cdots,0)$
with i-th component 1 and others 0. Hence we denote the
denominator of $X_{\tau {P_i}}$ by $x^{-1}_i$, and assert the
constant term of numerator in $X_{\tau {P_i}}$ is 1. With these
notations, we have the following Proposition \ref{2}.
\begin{Prop}\label{2}
Let $Q$ be a tame quiver with the \emph{alternating} orientation.
For any object $M \in \mathcal{C}(Q)$, then
$X_{M}=\frac{P(x)}{x^{\mathrm{\underline{dim}}(M)}}$ where the
constant term of P(x) is 1.
\end{Prop}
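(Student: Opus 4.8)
The plan is to prove the statement by an induction on the object $M \in \mc(Q)$, using the cluster multiplication formulas to reduce a general $M$ to the projective and injective building blocks handled in Lemma \ref{1}. The base case is precisely Lemma \ref{1}: if $M$ is some $P_i$ or $I_i$, the numerator of $X_M$ has constant term $1$. Since $\mc(Q)$ is built from indecomposable modules together with the shifted projectives $P_i[1]=\tau P_i$, and since $X_{M\oplus N}^Q = X_M^Q X_N^Q$ is multiplicative, I would first reduce to the case where $M$ is indecomposable: if $M = \bigoplus M_j$ and each $X_{M_j}$ has the form $P_j(x)/x^{\underline{\mathrm{dim}}M_j}$ with constant term of $P_j$ equal to $1$, then $X_M = \prod_j P_j(x)/x^{\underline{\mathrm{dim}}M}$ and the constant term of $\prod_j P_j(x)$ is the product of the constant terms, hence $1$. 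The shifted-projective factors $X_{\tau P_i}=x_i=1/x_i^{-1}$ are, by the remark preceding the statement, already covered (constant term $1$, denominator $x_i^{-1}$), so I only need indecomposable $\bbc Q$-modules $M$.

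For an indecomposable non-projective module $M$ I would run the induction using the Auslander--Reiten formula Theorem \ref{AR formula}, which gives $X_M X_{\tau M} = 1 + X_E$, where $E$ is the middle term of the AR-sequence $0 \to \tau M \to E \to M \to 0$. Solving, $X_M = (1+X_E)/X_{\tau M}$. The denominator theorem (Theorem \ref{d}) controls the denominator vectors: $X_M$, $X_{\tau M}$ and $X_E$ have denominators $x^{\underline{\mathrm{dim}}M}$, $x^{\underline{\mathrm{dim}}\tau M}$ and $x^{\underline{\mathrm{dim}}E}$ respectively, and $\underline{\mathrm{dim}}E = \underline{\mathrm{dim}}M + \underline{\mathrm{dim}}\tau M$ from the AR-sequence. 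Writing $X_E = Q(x)/x^{\underline{\mathrm{dim}}E}$ and $X_{\tau M} = S(x)/x^{\underline{\mathrm{dim}}\tau M}$, the relation $X_M = (1+X_E)/X_{\tau M}$ becomes
$$
X_M = \frac{x^{\underline{\mathrm{dim}}E}+Q(x)}{x^{\underline{\mathrm{dim}}E}}\cdot\frac{x^{\underline{\mathrm{dim}}\tau M}}{S(x)} = \frac{x^{\underline{\mathrm{dim}}E}+Q(x)}{x^{\underline{\mathrm{dim}}M}S(x)}.
$$
By the inductive hypothesis $Q(x)$ and $S(x)$ have constant term $1$; since $\underline{\mathrm{dim}}E > 0$ in every coordinate that is nonzero, $x^{\underline{\mathrm{dim}}E}$ contributes no constant term, so the numerator $x^{\underline{\mathrm{dim}}E}+Q(x)$ has constant term $1$. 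Because the denominator theorem already forces the reduced denominator of $X_M$ to be exactly $x^{\underline{\mathrm{dim}}M}$, the factor $S(x)$ must divide $x^{\underline{\mathrm{dim}}E}+Q(x)$ evenly, and the quotient $P(x)$ then has constant term $1/1 = 1$. The projective indecomposables (and dually the injective ones via $\tau^{-1}$, using the injective analogue of the AR-formula) supply the base of this induction through Lemma \ref{1}.

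The main obstacle is organizing the induction so that every indecomposable $M$ is genuinely reached from the base cases, and in particular controlling the regular (tube) modules, where $\tau$ acts periodically and iterating $M \mapsto \tau M$ does not terminate at a projective. For preprojective and preinjective modules the induction is clean because repeated application of $\tau^{\pm 1}$ lands on a projective or injective in finitely many steps; the delicate point is the regular components, where I would instead induct on the quasi-length of $M$ within its tube, using the short exact sequences relating $E[n]$, $E[n-1]$ and the regular simples, with the regular simples themselves handled as the base case of that secondary induction (again via an AR-type formula inside the tube). A second technical point is justifying the divisibility step: rather than proving $S(x)\mid x^{\underline{\mathrm{dim}}E}+Q(x)$ by hand, I would invoke Theorem \ref{d} to know a priori that $X_M$ is a Laurent polynomial with reduced denominator $x^{\underline{\mathrm{dim}}M}$, so the cancellation is automatic and only the constant-term bookkeeping remains.
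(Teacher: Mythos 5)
Your treatment of the preprojective and preinjective cases matches the paper's: reduce to indecomposables by multiplicativity, then induct along the directed AR-component using $X_MX_{\tau M}=1+X_E$ with Lemma \ref{1} as the base, and let Theorem \ref{d} handle the divisibility bookkeeping. That part is fine.

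The genuine gap is in the regular case, and it is exactly where you flag "the delicate point." Your proposed fix --- a secondary induction on quasi-length inside the tube, "with the regular simples themselves handled as the base case \ldots\ again via an AR-type formula inside the tube" --- has no base case. The AR-sequence ending at a regular simple $E_i$ in a tube is $0\to\tau E_i\to \tau E_i[2]\to E_i\to 0$, so the AR formula gives $X_{E_i}X_{\tau E_i}=1+X_{\tau E_i[2]}$: it relates quasi-length-one objects only to each other and to a quasi-length-two object, and a homogeneous regular simple satisfies $M\cong\tau M$. Tubes are $\tau$-periodic and closed under the AR-quiver structure, so no amount of iterating $\tau$ or AR-sequences inside a tube ever reaches a projective or injective, and the induction never gets started. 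The paper's essential extra idea is to leave the tube: pick an extending vertex $e$ with $\delta_e=1$, so that for the regular simple $M$ of dimension $\delta$ (or for the appropriate $E_1$ at the mouth of a non-homogeneous tube) one has $\dim_{\bbc}\mathrm{Ext}^1_{\mc(Q)}(M,P_e)=1$; Theorem \ref{CK} then yields $X_MX_{P_e}=X_L+X_{L'}X_{\tau^{-1}L''}$ with $L,L'$ preprojective and $L''$ preinjective, i.e.\ all other factors are non-regular objects already covered by part (1), whence the constant term of the numerator of $X_M$ is forced to be $1$. The remaining regular simples $E_2,\dots,E_r$ of a non-homogeneous tube are then reached by propagating through $X_{E_1}X_{E_2}=X_{E_1[2]}+1$ (after first treating $X_{E_1[2]}$ by the same pairing with $P_e$). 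Without some such step outside the tube, your argument does not close.
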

\begin{proof}
It is enough to consider the case that $M$ is an indecomposable
module.

1) If $M$ is an indecomposable preprojective module, then by the
exchange relation in Thereom \ref{AR formula} we have
$$X_{M}X_{\tau M}=\prod_iX_{B_i}+1.$$
Thus by Lemma \ref{1}, we can prove that
$X_{M}=\frac{P(x)}{x^{\mathrm{\underline{dim}}(M)}}$ where the
constant term of $P(x)$ is $1$ by induction and the directness of
the preprojective component of the Auslander-Reiten quiver of
indecomposable $\bbc Q$-modules. It is similar for indecomposable
preinjective modules.

2) If $M$ is an indecomposable regular module, it is enough to prove
that the proposition holds for any regular simple module according
to the exchange relation. If $M$ is in a homogeneous tube, , then
$M\cong \tau M$. It is enough to consider the case of
$\mathrm{\underline{dim}}M=\delta=(\delta_i)_{i\in Q_0}$ by Theorem
\ref{AR formula}. Note that there exists a vertex $e\in Q_0$ such
that $\delta_e=1.$ Thus we have
$$\mathrm{dim}_{\bbc} \mathrm{Ext}^1_{\bbc
Q}(M,P_e)=\mathrm{dim}_{\bbc}\mathrm{Hom}_{\bbc Q}(P_e, M)=1.$$
Then we obtain the following two non-split exact sequences:
$$0\longrightarrow P_e\longrightarrow L\longrightarrow M\longrightarrow 0$$
and
$$0\longrightarrow L'\longrightarrow P_e\longrightarrow  M\longrightarrow L''\longrightarrow 0$$
where $L$ and $L'$ are preprojective modules and $L''$  is a
preinjective module. Using Theorem \ref{XX} or Theorem \ref{CK},
we have
$$X_{M}X_{P_e}=X_{L}+X_{L'}X_{\tau^{-1} L''}$$
where $\mathrm{\underline{dim}}(L'\oplus \tau^{-1} L'')\prec
\mathrm{\underline{dim}}(P_e\oplus M)$. We have already proved
that the constant terms of the numerators of $X_{P_e}, X_{L},
X_{L'}$ and $X_{\tau^{-1}L''}$ as irreducible fractions of
integral polynomials in the variables $x_i$ is $1$ by the
discussion in 1), then the constant term of the numerator in
$X_{M}$ as an irreducible fraction must be $1$. Let $\mathcal{T}$
be a non-homogeneous tube of rank $r$ with regular simple module
$E_i$ for $1\leq i\leq r.$ By Theorem \ref{AR formula}, we only
need to prove the constant term of the numerator in $X_{E_{i}}$ is
$1$ for $1\leq i\leq r$. We assume $d_{E_r, e}\neq 0$ and $\tau
E_{2}=E_{1}, \cdots, \tau E_{1}=E_r.$ Therefore
$\mathrm{dim}_{\bbc}\mathrm{Ext}^1(E_{1},P_e)=1$, then we have the
following non-split exact sequences combining the relation $\tau
E_{1}=E_{r}$
$$0\longrightarrow P_e\longrightarrow L\longrightarrow E_{1}\longrightarrow 0$$
and
$$0\longrightarrow L'\longrightarrow P_e\longrightarrow E_{r}\longrightarrow L''\longrightarrow 0$$
where $L$ and $L'$ are preprojective modules and $L''$  is a
preinjective module. Then  we have
$$X_{E_{1}}X_{P_e}=X_{L}+X_{L'}X_{\tau^{-1} L''}$$
where $\mathrm{\underline{dim}}(L'\oplus \tau^{-1} L'')\prec
\mathrm{\underline{dim}}(P_e\oplus E_{1})$. Hence, the constant
term of the numerator in $X_{E_{1}}$ must be $1$.  Note that
$d_{E_{r}[2],e}=d_{E_{r},e}+ d_{E_{1},e}=1$, by similar
discussions, we can obtain the constant term of the numerator in
$X_{E_{1}[2]}$ must be $1$. Thus by
$X_{E_{1}}X_{E_{2}}=X_{E_{1}[2]}+1$, we obtain that the constant
term of the numerator in $X_{E_{2}}$ must be $1$. Using the same
method, we can prove the constant term of the numerator in
$X_{E_{i}}$ must be $1$ for $3\leq i\leq r.$
\end{proof}

\section{Generalized
cluster variables on tubes} Let $Q$ be a tame quiver with the
minimal imaginary root $\delta=(\delta_i)_{i\in Q_0}$. Then tubes of
indecomposable regular $\bbc Q$-modules are indexed by the
projective line $\mathbb{P}^1.$ Let $\lambda$ be an index of a
homogeneous tube and $E(\lambda)$ be the regular simple $\bbc
Q$-module with dimension vector $\delta$ in this homogeneous tube.
Assume there are $t(\leq 3)$ non-homogeneous tubes for $Q.$ We
denote these tubes by $\mathcal{T}_1, \cdots, \mathcal{T}_t$. Let
$r_i$ be the rank of $\mathcal{T}_i$ and the regular simple modules
in $\mathcal{T}_i$ be $E^{(i)}_{1}, \cdots E^{(i)}_{r_i}$ such that
$\tau E^{(i)}_2=E^{(i)}_1, \cdots, \tau E^{(i)}_1=E^{(i)}_{r_i}$ for
$i=1, \cdots, t$. If we restrict the discussion to one tube, we will
omit the index $i$ for convenience. Given a regular simple $E$ in a
non-homogeneous tube, $E[i]$ is the indecomposable regular module
with quasi-socle $E$ and quasi-length $i$ for any $i\in \bbn$. Let
$X_M$ be the generalized cluster variable associated to $M$ by the
reformulation of the Caldero-Chapton map. Set
$X_{n\delta_{i,j}}=X_{E^{(i)}_{j}[nr_i]}$ for $n\in \mathbb{N}$.

\begin{Prop}\cite[Lemma 3.14]{Dupont1}\label{independent}
Let $\lambda$ and $\mu$ be in $\bbc$ such that $E(\lambda)$ and
$E(\mu)$ are two regular simple modules of dimension vector
$\delta$. Then $\chi(Gr_{\ue}(E(\lambda)))=\chi(Gr_{\ue}(E(\mu))).$
\end{Prop}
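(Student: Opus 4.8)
The plan is to realise all the modules $E(\lambda)$ simultaneously as the fibres of a single \emph{smooth proper} family of quiver Grassmannians over an irreducible base, and then to invoke Ehresmann's fibration theorem to conclude that the fibres are diffeomorphic, hence have equal Euler characteristic. The representation-theoretic input is a uniform smoothness statement for the Grassmannians $\mathrm{Gr}_{\ue}(E(\lambda))$ that holds for \emph{every} $\lambda$, not merely for a generic one.

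First I would analyse the subrepresentations of a fixed regular simple $E=E(\lambda)$ of dimension $\delta$. Decomposing a proper nonzero submodule $V\subseteq E$ into indecomposables, each summand admits a nonzero map into $E$; since $\hom_{\bbc Q}(\text{preinjective},\text{regular})=0$, and since a nonzero map from a regular module to the regular simple $E$ has regular image, which is $0$ or $E$ and would force $V=E$, no summand of $V$ can be preinjective or regular. Hence every proper submodule $V$ is preprojective, and dually, applying $\hom_{\bbc Q}(\text{regular},\text{preprojective})=0$ to the surjection $E\twoheadrightarrow E/V$, the quotient $E/V$ is a sum of regular and preinjective indecomposables. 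Now I would compute, via the Auslander--Reiten duality $\mathrm{Ext}^1(V,E/V)\cong D\hom_{\bbc Q}(E/V,\tau V)$, that $\mathrm{Ext}^1(V,E/V)=0$: projective summands of $V$ contribute no $\mathrm{Ext}^1$, while for the remaining summands $\tau V$ is preprojective and $\hom_{\bbc Q}(\text{regular}\oplus\text{preinjective},\text{preprojective})=0$. Since the tangent space of $\mathrm{Gr}_{\ue}(E)$ at $V$ is $\hom_{\bbc Q}(V,E/V)$ with obstructions in $\mathrm{Ext}^1(V,E/V)$, this shows that $\mathrm{Gr}_{\ue}(E(\lambda))$ is smooth of dimension $\langle\ue,\delta-\ue\rangle$ for \emph{every} $\lambda$.

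Next I would set up the family. Let $R\subseteq\bbe_{\delta}$ be the locus of $x$ for which $M(x)$ is regular simple; since $\delta$ is an isotropic Schur root, the generic representation of dimension $\delta$ is a homogeneous regular simple module, so $R$ is dense in the affine space $\bbe_{\delta}$ and hence irreducible. Form the relative quiver Grassmannian
$$\tilde Z=\{(x,V)\mid x\in R,\ V\subseteq M(x),\ \udim V=\ue\},\qquad \pi\colon \tilde Z\to R,$$
which is projective over $R$ and therefore proper. The fibrewise smoothness and constant fibre dimension $\langle\ue,\delta-\ue\rangle$ from the previous paragraph, combined with the analogous tangent--obstruction computation for the total space, show that $\tilde Z$ is smooth; as $R$ is smooth and the fibres are equidimensional, miracle flatness gives flatness of $\pi$, so $\pi$ is a smooth proper morphism. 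By Ehresmann's fibration theorem $\pi$ is, in the analytic topology, a locally trivial fibre bundle over the connected base $R$, whence all fibres $\mathrm{Gr}_{\ue}(M(x))$ are diffeomorphic and in particular share one Euler characteristic. Since both $E(\lambda)$ and $E(\mu)$ occur as $M(x)$ for suitable $x\in R$, this yields $\chi(\mathrm{Gr}_{\ue}(E(\lambda)))=\chi(\mathrm{Gr}_{\ue}(E(\mu)))$.

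The main obstacle I anticipate lies in the smoothness of $\pi$, equivalently in the fact that the vanishing $\mathrm{Ext}^1(V,E/V)=0$ holds uniformly across \emph{all} of $R$ rather than only on a dense open subset. A naive constructibility argument would give constancy of $\chi(\mathrm{Gr}_{\ue}(E(\lambda)))$ only away from finitely many exceptional homogeneous tubes; it is precisely the representation-theoretic equisingularity established above --- that subrepresentations of a regular simple are always preprojective and the relevant self-extension groups always vanish --- that makes $\pi$ a genuine fibre bundle over the whole connected family and thereby eliminates any exceptional tube.
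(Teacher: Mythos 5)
Your argument is correct, but note that the paper itself offers no proof of this statement: Proposition \ref{independent} is quoted from Dupont's Lemma 3.14 and used as a black box (e.g.\ in Section 2.2 to get $X_E=X_{E'}$), so there is no internal argument to compare against. What you have written is therefore a genuine, self-contained proof, and it correctly isolates the real difficulty, which you name in your last paragraph: constructibility of $x\mapsto\chi(\mathrm{Gr}_{\ue}(M(x)))$ on $\bbe_{\delta}$ only gives constancy on a dense open set, which could a priori miss some of the codimension-one orbits of the $E(\lambda)$, so one genuinely needs a statement uniform over \emph{all} homogeneous tubes. Your representation-theoretic input is sound: a proper submodule $V$ of a regular simple $E$ of dimension vector $\delta$ has no preinjective summand (no nonzero maps from preinjectives to regulars) and no regular summand (a regular submodule of $E$ is a subobject of a simple object in the exact abelian subcategory of regular modules), hence is preprojective; dually $E/V$ has only regular and preinjective summands; and $\mathrm{Ext}^1(V,E/V)\cong D\mathrm{Hom}(E/V,\tau V)=0$, which by the standard tangent-space computation makes every $\mathrm{Gr}_{\ue}(E(\lambda))$ smooth of the expected dimension $\langle\ue,\delta-\ue\rangle$. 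Two points you gloss over deserve to be spelled out, though both are true and routine: (i) the base $R$ must be a smooth connected \emph{variety}, not merely a dense constructible set; this holds because $R$ is the open brick locus of $\bbe_{\delta}$ with the finitely many orbits of the modules $E^{(i)}_{j}[r_i]$ removed, and each such orbit is closed inside the brick locus since any proper degeneration of a brick has endomorphism algebra of dimension at least $2$; (ii) smoothness of the total space $\tilde Z$ is most cleanly obtained by observing that the universal quiver Grassmannian over all of $\bbe_{\delta}$ is a sub-vector-bundle of the trivial bundle over $\prod_{i}\mathrm{Gr}(e_i,\delta_i)$, so that $\tilde Z$ is an open subvariety of a smooth irreducible variety. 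With these in place, miracle flatness and Ehresmann's theorem give the locally trivial fibration over the connected base, and the conclusion follows (including the degenerate case, since the image of the smooth proper map $\pi$ is open and closed in $R$, so emptiness of the fibres is also constant). Your proof is independent of the cited one and has the additional merit of showing that all the Grassmannians $\mathrm{Gr}_{\ue}(E(\lambda))$ are in fact diffeomorphic, not merely of equal Euler characteristic.
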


\begin{Prop}\label{def1}
Let $M$ be the regular simple $\bbc Q$-module with dimension vector
$\delta$ in a homogeneous tube. For any $m,n\in \bbn$ and $m\geq n$,
we have
$$X_{M[m]}X_{M[n]}=X_{M[m+n]}+X_{M[m+n-2]}+\cdots+X_{M[m-n+2]}+X_{M[m-n]}.$$
\end{Prop}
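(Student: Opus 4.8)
The plan is to derive the formula from the Auslander--Reiten formula (Theorem \ref{AR formula}) by extracting a Chebyshev-type recursion. Throughout write $x_k := X_{M[k]}$, with the convention $M[0]=0$ and $x_0 := 1$ (the value of $X$ on the zero object, forced by the multiplicativity rule (3) of the Caldero--Chapton map). Since $M$ is the quasi-simple module of a homogeneous tube, that tube has rank one, so $\tau M[n]\cong M[n]$ for all $n$, and the almost split sequence ending in $M[n]$ has middle term $M[n-1]\oplus M[n+1]$ for $n\ge 2$ and $M[2]$ for $n=1$. Each $M[n]$ is regular, hence non-projective, so Theorem \ref{AR formula} applies; combined with rule (3) (to split $X_{M[n-1]\oplus M[n+1]}=x_{n-1}x_{n+1}$) it yields the quadratic relations
\begin{equation}\label{qr}
x_1^2 = 1 + x_2, \qquad x_n^2 = 1 + x_{n-1}x_{n+1}\quad (n\ge 2).
\end{equation}

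First I would extract the three-term linear recursion $x_1 x_k = x_{k+1}+x_{k-1}$ for all $k\ge 1$, which is exactly the case $n=1$ of Proposition \ref{def1}. I would argue by induction on $k$, working in the field $\bbq(x_1,\dots,x_n)$. The base case $k=1$ is the first relation in \eqref{qr}. For the step, assuming $x_1 x_k = x_{k+1}+x_{k-1}$, I would solve the relation $x_{k+1}^2=1+x_k x_{k+2}$ of \eqref{qr} for $x_{k+2}$, multiply the target identity $x_{k+2}=x_1 x_{k+1}-x_k$ through by $x_k\ (\ne 0)$, and check that it collapses to $0=0$ after substituting the inductive hypothesis together with the relation $x_k^2=1+x_{k-1}x_{k+1}$. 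This passage from the \emph{balanced} squares produced by Theorem \ref{AR formula} to the \emph{unbalanced} products $x_1 x_k$ is the one genuinely delicate point; the division by $x_k$ is legitimate because every generalized cluster variable is a nonzero element of the field $\bbq(x_1,\dots,x_n)$.

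With the recursion established, I would prove
$$x_m x_n = x_{m+n} + x_{m+n-2} + \cdots + x_{m-n} = \sum_{j=0}^{n} x_{m+n-2j} \qquad (m\ge n)$$
by induction on $n$, treating all $m\ge n$ simultaneously. The cases $n=0$ (trivial, using $x_0=1$) and $n=1$ (the recursion) begin the induction. For the step I would write $x_{n+1}=x_1 x_n - x_{n-1}$, so that $x_m x_{n+1}=x_1(x_m x_n)-x_m x_{n-1}$, insert the inductive formulas for $x_m x_n$ and $x_m x_{n-1}$ (both available since $m\ge n+1$ exceeds $n$ and $n-1$), and apply $x_1 x_l = x_{l+1}+x_{l-1}$ to each resulting summand --- valid because the smallest index occurring is $m-n\ge 1$. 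A telescoping cancellation then leaves exactly $\sum_{j=0}^{n+1}x_{m+n+1-2j}$, the claim for $n+1$.

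I expect the main obstacle to be the first induction, since Theorem \ref{AR formula} only outputs squares and one must bootstrap the linear recursion out of them; the second induction is a routine telescoping computation, although the boundary bookkeeping (ensuring $m-n\ge 1$, and the appearance of $x_0=1$ when $m=n$) must be tracked with care.
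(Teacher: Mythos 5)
Your proof is correct, and the second induction (on $n$, via $X_{M[k+1]}=X_{M[k]}X_M-X_{M[k-1]}$ followed by the telescoping cancellation) is word-for-word the paper's argument. Where you genuinely diverge is in how you obtain the three-term recursion $X_{M[m]}X_M=X_{M[m+1]}+X_{M[m-1]}$. The paper gets it in one step by applying the cluster multiplication theorem (Theorem \ref{XX}, or Theorem \ref{CK}) to the pair $(M[m],M)$, using the two non-split short exact sequences $0\to M\to M[m+1]\to M[m]\to 0$ and $0\to M\to M[m]\to M[m-1]\to 0$ together with $\tau M[k]\cong M[k]$. You instead use only the Auslander--Reiten formula (Theorem \ref{AR formula}) to produce the balanced relations $x_n^2=1+x_{n-1}x_{n+1}$ and then bootstrap the unbalanced recursion by the identity $(x_1x_{k+1}-x_k)x_k=x_{k+1}^2-1=x_kx_{k+2}$ and cancellation of $x_k\neq 0$ in $\bbq(x_1,\dots,x_n)$; I checked this computation and it closes. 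The trade-off: your route costs an extra induction and a division argument, but it only invokes Theorem \ref{AR formula}, which applies unconditionally to non-projective indecomposables. The paper's route is shorter but, strictly speaking, Theorem \ref{CK} does not literally apply here since $\mathrm{Ext}^1_{\mc(Q)}(M[m],M)\cong\mathrm{Ext}^1_{\bbc Q}(M[m],M)\oplus D\,\mathrm{Ext}^1_{\bbc Q}(M,M[m])$ is two-dimensional, so one must really fall back on Theorem \ref{XX}; your argument sidesteps that subtlety entirely. Both are valid; yours is arguably the more self-contained of the two.
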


\begin{proof}
If $n=1$, we know
$\mathrm{dim}_{\bbc}\mathrm{Ext}^{1}(M[m],M)=\mathrm{dim}_{\bbc}
\mathrm{Hom}(M, M[m])=1$. The involving non-split short exact
sequences are
$$0\longrightarrow M\longrightarrow M[m+1]\longrightarrow M[m]\longrightarrow 0$$
and
$$0\longrightarrow M\longrightarrow M[m]\longrightarrow M[m-1]\longrightarrow 0.$$
Thus by  Theorem \ref{XX} or Theorem \ref{CK}  and the fact $\tau
M[k]=M[k]$ for any $k\in \bbn$, we obtain the equation
 $$
X_{M[m]}X_{M}=X_{M[m+1]}+X_{M[m-1]}.$$

Suppose that it holds for  $n\leq k$. When $n=k+1,$  we have
$$X_{M[m]}X_{M[k+1]}=X_{M[m]}(X_{M[k]}X_{M}-X_{M[k-1]})= X_{M[m]}X_{M[k]}X_{M}-X_{M[m]}X_{M[k-1]}$$
$$\hspace{-1.3cm}=\sum_{i=0}^{k}X_{M[m+k-2i]}X_{M}-\sum_{i=0}^{k-1}X_{M[m+k-1-2i]}$$
$$\hspace{1.3cm}=\sum_{i=0}^{k}(X_{M[m+k+1-2i]}+X_{M[m+k-1-2i]})-\sum_{i=0}^{k-1}X_{M[m+k-1-2i]}$$
$$
\hspace{-4.6cm}=\sum_{i=0}^{k+1}X_{M[m+k+1-2i]}.$$
\end{proof}
 By
Proposition \ref{independent} and Proposition \ref{def1}, we can
define $X_{n\delta}:=X_{M[n]}$ for $n\in \bbn.$ Now we consider
non-homogeneous tubes.

\begin{Prop}\label{4}
Let $Q$ be a tame quiver with the \emph{alternating} orientation.
Then
$X_{n\delta_{i,k}}=X_{n\delta_{j,l}}+\sum_{\mathrm{\underline{dim}}L\prec
n\delta}a_LX_L$  in  non-homogeneous tubes where $a_L\in
\mathbb{Q}$.
\end{Prop}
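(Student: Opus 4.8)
The plan is to prove the stronger statement by induction on $n$: I claim that $X_{n\delta_{i,k}}$ is independent of the non-homogeneous tube $\mathcal{T}_i$ and of the quasi-socle $E^{(i)}_k$ \emph{modulo lower-order terms}, where I write $f\equiv g$ (mod $\prec\mathbf{d}$) to mean that $f-g$ lies in the $\mathbb{Q}$-span of the $X_L$ with $\underline{\mathrm{dim}}L\prec\mathbf{d}$. The formal engine is that $\equiv$ is compatible with multiplication up to a degree shift: by Theorem \ref{XX}(1) a product $X_AX_B$ expands as a $\mathbb{Q}$-combination of $X_C$ with $\underline{\mathrm{dim}}C\preceq\underline{\mathrm{dim}}A+\underline{\mathrm{dim}}B$, the leading terms (those with $\underline{\mathrm{dim}}C=\underline{\mathrm{dim}}A+\underline{\mathrm{dim}}B$) being the extension terms and the remainder being strictly smaller by Lemma \ref{induction}. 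Consequently, if $X_A\equiv X_{A'}$ (mod $\prec\underline{\mathrm{dim}}A$) and $X_B\equiv X_{B'}$ (mod $\prec\underline{\mathrm{dim}}B$) with both dimension vectors nonnegative, then $X_AX_B\equiv X_{A'}X_{B'}$ (mod $\prec\underline{\mathrm{dim}}A+\underline{\mathrm{dim}}B$), since $X_AX_B-X_{A'}X_{B'}=(X_A-X_{A'})X_B+X_{A'}(X_B-X_{B'})$.

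For the base case $n=1$ I would compare all regular modules of dimension vector $\delta$ across the non-homogeneous tubes by multiplying with the simple projective $P_e$ at a vertex $e$ with $\delta_e=1$. By AR duality, $\mathrm{dim}_{\bbc}\mathrm{Ext}^1_{\bbc Q}(E^{(i)}_k[r_i],P_e)=\mathrm{dim}_{\bbc}\mathrm{Hom}_{\bbc Q}(P_e,\tau E^{(i)}_k[r_i])=(\underline{\mathrm{dim}}E^{(i)}_{k-1}[r_i])_e=\delta_e=1$, so Theorem \ref{XX}(1) produces a single leading term, namely the middle term $L$ of the unique non-split sequence $0\to P_e\to L\to E^{(i)}_k[r_i]\to 0$, giving
$$X_{E^{(i)}_k[r_i]}X_{P_e}=X_L+(\text{terms }\prec\delta+\underline{\mathrm{dim}}P_e).$$
Here $L$ is the indecomposable preprojective module of dimension vector $\delta+\underline{\mathrm{dim}}P_e$; as this dimension vector is a real Schur root and the preprojective component is directed, $L$ is the \emph{same} module for every tube and every quasi-socle. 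Subtracting two such identities yields $(X_{E^{(i)}_k[r_i]}-X_{E^{(j)}_l[r_j]})X_{P_e}\equiv 0$ (mod $\prec\delta+\underline{\mathrm{dim}}P_e$), from which the base case will follow after division by $X_{P_e}$ (addressed below).

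For the inductive step I would exploit the short exact sequence $0\to E^{(i)}_k[(n-1)r_i]\to E^{(i)}_k[nr_i]\to E^{(i)}_k[r_i]\to 0$ coming from the uniserial structure of tube modules. A direct check on submodules and quotients shows $\mathrm{dim}_{\bbc}\mathrm{Hom}_{\bbc Q}(E^{(i)}_k[r_i],E^{(i)}_k[(n-1)r_i])=1$ (the only maps are scalar multiples of the inclusion), and since $\langle\delta,(n-1)\delta\rangle=(n-1)\langle\delta,\delta\rangle=0$ we also get $\mathrm{dim}_{\bbc}\mathrm{Ext}^1_{\bbc Q}(E^{(i)}_k[r_i],E^{(i)}_k[(n-1)r_i])=1$. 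Thus Theorem \ref{XX}(1) again has a single leading term, the unique non-split extension, whose middle term is $E^{(i)}_k[nr_i]$, so
$$X_{E^{(i)}_k[r_i]}X_{E^{(i)}_k[(n-1)r_i]}=X_{E^{(i)}_k[nr_i]}+(\text{terms }\prec n\delta).$$
Applying the induction hypothesis at levels $1$ and $n-1$, namely $X_{E^{(i)}_k[r_i]}\equiv X_{E^{(j)}_l[r_j]}$ and $X_{E^{(i)}_k[(n-1)r_i]}\equiv X_{E^{(j)}_l[(n-1)r_j]}$, together with the multiplicativity of $\equiv$ from the first paragraph, gives $X_{E^{(i)}_k[nr_i]}\equiv X_{E^{(j)}_l[nr_j]}$ (mod $\prec n\delta$), which is precisely the assertion.

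The step I expect to be the main obstacle is the passage from $(X_{E^{(i)}_k[r_i]}-X_{E^{(j)}_l[r_j]})X_{P_e}\equiv 0$ to $X_{E^{(i)}_k[r_i]}\equiv X_{E^{(j)}_l[r_j]}$, that is, the \emph{division by $X_{P_e}$}. Mere cancellation of the most singular monomial is insufficient, because the denominator vector need not drop when the leading coefficient vanishes (for example $x_1^{-1}-x_2^{-1}$ still has denominator vector $(1,1)$); one cannot argue by symbols alone. I would resolve this using the denominator theorem (Theorem \ref{d}) together with Proposition \ref{2}: the leading Laurent behaviour of $X_L$ is governed by $\underline{\mathrm{dim}}L$, so the $X_L$ form a triangular, hence linearly independent, system indexed by dimension vectors, and $X_{P_e}$ has denominator $x^{\underline{\mathrm{dim}}P_e}$ with numerator of constant term $1$; this rigidity forces the difference to lie in the span of $X_{L'}$ with $\underline{\mathrm{dim}}L'\prec\delta$. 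The remaining technical points are the two $\mathrm{Hom}/\mathrm{Ext}^1$ computations identifying the relevant extension spaces as one-dimensional and the indecomposability and tube-independence of the preprojective module $L$ in the base case; both are standard facts from the representation theory of tame quivers.
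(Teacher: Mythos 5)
Your strategy (compare leading terms after multiplying by an auxiliary cluster variable, then divide using the triangularity supplied by Proposition \ref{2} and Theorem \ref{d}) is genuinely different from the paper's, which instead expands the single product $X_{S_1}^{v_1}\cdots X_{S_n}^{v_n}$ in two ways, once grouped according to the regular simples of $\mathcal{T}_i$ and once according to those of $\mathcal{T}_j$, so that $X_{\delta_{i,k}}$ and $X_{\delta_{j,l}}$ both occur as leading terms of the \emph{same} expression. Your inductive step is sound, and the ``division by $X_{P_e}$'' that you flag as the main obstacle is in fact handled correctly by your triangularity argument. The real gap is elsewhere, in the base case: the middle term $M_{i,k}$ of the unique non-split sequence $0\to P_e\to M_{i,k}\to E^{(i)}_k[r_i]\to 0$ is \emph{not} the indecomposable preprojective $L$ of dimension vector $\delta+\underline{\mathrm{dim}}P_e$ for every quasi-socle $k$. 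It is so only for the one quasi-socle per tube for which $E^{(i)}_k[r_i]$ lies in the set $\Omega$ of the appendix, i.e.\ has regular top supported at $e$. For the other quasi-socles the middle term is a decomposable module of the form $(\mbox{regular})\oplus(\mbox{preprojective})$ with the same dimension vector $\delta+\underline{\mathrm{dim}}P_e$, and it depends on $i$ and $k$. Concretely, for $\widetilde{A}_{2,2}$ with arrows $1\to 2\leftarrow 3\to 4\leftarrow 1$, $e=2$, and the rank-two tube with regular simples $E_1,E_2$ of dimension vectors $(1,1,0,0)$ and $(0,0,1,1)$: the non-split extension of $E_2[2]$ by $P_2=S_2$ has middle term the indecomposable preprojective of dimension $(1,2,1,1)$, whereas the non-split extension of $E_1[2]$ by $S_2$ has middle term $E_1\oplus P'$ with $P'$ the indecomposable preprojective of dimension $(0,1,1,1)$ --- one checks directly that this extension carries a nontrivial idempotent endomorphism. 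Since $\underline{\mathrm{dim}}(E_1\oplus P')=\underline{\mathrm{dim}}L$, the discrepancy $X_{E_1\oplus P'}-X_L$ sits at leading order, so $(X_{E^{(i)}_k[r_i]}-X_{E^{(j)}_l[r_j]})X_{P_e}\not\equiv 0$ modulo lower terms in general and your cancellation collapses.

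This is precisely the difficulty that the appendix's difference property (Theorem \ref{difference}) is designed to overcome, and doing so requires the much more delicate chain of exact identities given there; the paper's proof of Proposition \ref{4} avoids the issue entirely by never identifying middle terms over $P_e$, using the two groupings of $\prod_i X_{S_i}^{v_i}$ instead. A secondary, repairable point: with the alternating orientation an extending vertex need not be a sink (in the paper's $\widetilde{D}_4$ example all extending vertices are sources), so $P_e$ need not be simple; this can be fixed by duality or by noting that $\mathrm{Hom}(P_e,-)$ still computes the $e$-th coordinate, but the middle-term identification cannot be fixed this way.
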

\begin{proof}
Denote $\delta=(v_1,v_2,\cdots,v_n)$ and
$\mathrm{\underline{dim}}E^{(i)}_{j}=(v_{j1},v_{j2},\cdots,v_{jn})$,
then $\delta=\sum_{1\leq j\leq
r_i}\mathrm{\underline{dim}}E^{(i)}_{j}$. Thus by Theorem
\ref{XX}, Lemma \ref{induction} and the fact that for any
dimension vector there is at most one exceptional module up to
isomorphism, we have
\begin{eqnarray*}
   && X^{v_1}_{S_1}X^{v_2}_{S_2}\cdots X^{v_n}_{S_n}\nonumber \\
   &=& (X^{v_{11}}_{S_1}X^{v_{12}}_{S_2}\cdots X^{v_{1n}}_{S_n})
\cdots (X^{v_{r_i1}}_{S_1}X^{v_{r_i2}}_{S_2}\cdots
X^{v_{r_in}}_{S_n}) \nonumber\\
  &=& (a_1X_{E^{(i)}_{1}}+\sum_{\mathrm{\underline{dim}}L'\prec
\mathrm{\underline{dim}}E^{(i)}_{1}}a_{L'}X_{L'})\cdots
(a_{r_i}X_{E^{(i)}_{r_i}}+\sum_{\mathrm{\underline{dim}}L''\prec
\mathrm{\underline{dim}}E^{(i)}_{r_i}}a_{L''}X_{L''})\nonumber \\
  &=& a_1\cdots a_{r_i}X_{\delta_{i,k}}+\sum_{\mathrm{\underline{dim}}M\prec
\delta}a_MX_M.
\end{eqnarray*}
Note that by Proposition \ref{2}, the left hand side of the above
equation have a term $\frac{1}{\prod^{n}_{i=1}x^{v_i}_{i}}$ which
cannot appear in $X_M$ for $\mathrm{\underline{dim}}M\prec \delta$,
we thus have $a_i's\neq 0$.
 Similarly we have
\begin{eqnarray*}
   && X^{v_1}_{S_1}X^{v_2}_{S_2}\cdots X^{v_n}_{S_n}\nonumber \\
  &=& (b_1X_{E^{(j)}_{1}}+\sum_{\mathrm{\underline{dim}}T'\prec
\mathrm{\underline{dim}}E^{(j)}_{1}}b_{T'}X_{T'})\cdots
(b_{r_j}X_{E^{(j)}_{r_j}}+\sum_{\mathrm{\underline{dim}}T''\prec
\mathrm{\underline{dim}}E^{(j)}_{r_j}}b_{T''}X_{T''})\nonumber \\
   &=& b_1\cdots b_{r_j}X_{\delta_{j,l}}+\sum_{\mathrm{\underline{dim}}N\prec
\delta}b_NX_N.
\end{eqnarray*}
where $b_i's\neq 0$. Thus we have
$$a_1\cdots a_nX_{\delta_{i,k}}=b_1\cdots b_nX_{\delta_{j,l}}+\sum_{\mathrm{\underline{dim}}N\prec
\delta}b_NX_N-\sum_{\mathrm{\underline{dim}}M\prec \delta}a_MX_M.$$
Therefore by Proposition \ref{2} and  Theorem \ref{d}, we have
$$X_{\delta_{i,k}}=X_{\delta_{j,l}}+\sum_{\mathrm{\underline{dim}}N'\prec
\delta}b_{N'}X_{N'}.$$ \\
Now, suppose the proposition holds for $m\leq n,$ then on the one
hand
$$X_{n\delta_{i,k}}X_{\delta_{i,k}}=X_{(n+1)\delta_{i,k}}+\sum_{\mathrm{\underline{dim}}L'\prec
(n+1)\delta}b_{L'}X_{L'}.$$ \\
On the other hand \begin{eqnarray}
  && X_{n\delta_{i,k}}X_{\delta_{i,k}} \nonumber\\
   &=& (X_{n\delta_{j,l}}+\sum_{\mathrm{\underline{dim}}L\prec
n\delta}a_LX_L)X_{\delta_{i,k}} \nonumber\\
   &=& (X_{n\delta_{j,l}}+\sum_{\mathrm{\underline{dim}}L\prec
n\delta}a_LX_L)(X_{\delta_{j,l}}+\sum_{\mathrm{\underline{dim}}N'\prec
\delta}b_{N'}X_{N'})\nonumber \\
   &=& X_{(n+1)\delta_{j,l}}+\sum_{\mathrm{\underline{dim}}L''\prec
(n+1)\delta}b_{L''}X_{L''}.\nonumber
\end{eqnarray}\\
Therefore, we have
$$X_{(n+1)\delta_{i,k}}=X_{(n+1)\delta_{j,l}}+\sum_{\mathrm{\underline{dim}}L''\prec
(n+1)\delta}b_{L''}X_{L''}-\sum_{\mathrm{\underline{dim}}L'\prec
(n+1)\delta}b_{L'}X_{L'}.$$ \\
Thus the proof is finished.
\end{proof}
\begin{Prop}\label{5}
Let $Q$ be a tame quiver with the \emph{alternating} orientation.
Then
$X_{n\delta}=X_{n\delta_{i,1}}+\sum_{\mathrm{\underline{dim}}L\prec
n\delta}a_LX_L$, where $a_L\in \mathbb{Q}$.
\end{Prop}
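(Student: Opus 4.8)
Proposition~\ref{5} compares the generalized cluster variable $X_{n\delta}$ attached to an indecomposable regular module of dimension $n\delta$ in a homogeneous tube with the variable $X_{n\delta_{i,1}}$ attached to the module $E^{(i)}_1[nr_i]$ in a non-homogeneous tube. The plan is to proceed exactly as in the proof of Proposition~\ref{4}, where two non-homogeneous tubes were compared: the key idea there was to expand the product $X^{v_1}_{S_1}\cdots X^{v_n}_{S_n}$ (with $\delta=(v_1,\dots,v_n)$) over the regular simples of each tube using Theorem~\ref{XX} together with Lemma~\ref{induction}, and to isolate the top term using Proposition~\ref{2} and Theorem~\ref{d}. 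So the first step is to obtain, for the homogeneous tube, an analogue of the expansion
$$
X^{v_1}_{S_1}\cdots X^{v_n}_{S_n}=c\,X_{\delta}+\sum_{\mathrm{\underline{dim}}L\prec\delta}c_L X_L
$$
with leading coefficient $c\neq 0$, where $X_\delta=X_M$ for $M$ the regular simple of dimension $\delta$ in the homogeneous tube.

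\textbf{The base case $n=1$.} The difficulty, and the reason this is stated separately from Proposition~\ref{4}, is that $M$ in a homogeneous tube is itself the simple of dimension $\delta$, so there is no factorization of $X^{v_1}_{S_1}\cdots X^{v_n}_{S_n}$ into distinct regular simples as in the non-homogeneous case. Instead I would use the cluster multiplication directly. By Proposition~\ref{independent} (= Proposition~\ref{def1}'s hypothesis), all regular simples of dimension $\delta$ have the same $X$-value, so $X_\delta$ is well defined, and the multiplication formula of Theorem~\ref{XX} applied to the product over the simples $S_k$ still produces $X_\delta$ as a leading term with all other terms of strictly smaller dimension vector by Lemma~\ref{induction}. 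Comparing this expansion with the expansion over a non-homogeneous tube from the proof of Proposition~\ref{4} — both equal the same product $X^{v_1}_{S_1}\cdots X^{v_n}_{S_n}$ — and cancelling, one gets
$$
c\,X_{\delta}=a_1\cdots a_{r_i}X_{\delta_{i,1}}+\sum_{\mathrm{\underline{dim}}L\prec\delta}(\cdots)X_L.
$$
The crucial point, exactly as in Proposition~\ref{4}, is that the leading coefficients are nonzero: by Proposition~\ref{2} the left side $X^{v_1}_{S_1}\cdots X^{v_n}_{S_n}$ contains the monomial $1/\prod_i x_i^{v_i}$, which by Theorem~\ref{d} cannot occur in any $X_L$ with $\mathrm{\underline{dim}}L\prec\delta$, forcing both leading coefficients to be nonzero. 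Dividing gives the case $n=1$.

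\textbf{The inductive step.} For the induction on $n$ I would run the same two-step argument used at the end of Proposition~\ref{4}. Multiplying the inductive hypothesis $X_{n\delta}=X_{n\delta_{i,1}}+\sum_{\prec n\delta}a_L X_L$ by $X_\delta$ and using, on the homogeneous side, Proposition~\ref{def1} (which gives $X_{n\delta}X_\delta=X_{(n+1)\delta}+X_{(n-1)\delta}$) and, on the non-homogeneous side, the $n=1$ comparison together with the inductive formulas, one expresses $X_{(n+1)\delta}$ in terms of $X_{(n+1)\delta_{i,1}}$ plus a combination of $X_L$ with $\mathrm{\underline{dim}}L\prec(n+1)\delta$, absorbing the lower-order term $X_{(n-1)\delta}$ (which is itself $\prec(n+1)\delta$) into the error sum. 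The main obstacle is purely bookkeeping: one must check that every term produced by Theorem~\ref{XX}, by Proposition~\ref{def1}, and by the cross terms in the product has dimension vector strictly below $(n+1)\delta$, so that the leading terms $X_{(n+1)\delta}$ and $X_{(n+1)\delta_{i,1}}$ are genuinely isolated; Lemma~\ref{induction} and Theorem~\ref{d} guarantee this, and the nonvanishing of the leading coefficient again follows from Proposition~\ref{2}. Since the orientation is assumed \emph{alternating}, all the structural results (Lemma~\ref{1}, Proposition~\ref{2}, Proposition~\ref{4}) are available, so no further input is needed.
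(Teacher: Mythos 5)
Your inductive step matches the paper's, but the base case $n=1$ --- which you correctly single out as the real difficulty --- contains a genuine gap. You assert that expanding $X^{v_1}_{S_1}\cdots X^{v_n}_{S_n}$ via Theorem \ref{XX} ``still produces $X_\delta$ as a leading term'' with nonzero coefficient $c$. That is precisely what has to be proved, and it is not automatic: Lemma \ref{induction} only pushes the Hom-type terms below $\delta$, while the Ext-type terms of dimension exactly $\delta$ range over \emph{all} strata $\langle L\rangle$ in $\bbe_\delta$ (all non-homogeneous tubes, the single homogeneous stratum $\langle E(\lambda)\rangle$ of Proposition \ref{independent}, and decomposables), each weighted by an Euler characteristic that could perfectly well vanish on the homogeneous stratum. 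Your appeal to the monomial $1/\prod_i x_i^{v_i}$ via Proposition \ref{2} and Theorem \ref{d} only controls the \emph{sum} of the top-degree coefficients; after rewriting via Propositions \ref{independent} and \ref{4} it yields one relation $\alpha X_\delta+\beta X_{\delta_{i,1}}=(\text{known})+(\text{lower})$ with $\alpha+\beta$ known, which cannot separate $X_\delta$ from $X_{\delta_{i,1}}$. The paper's actual proof supplies the missing second equation by re-expanding the same product through the orthogonal pair $(P_e,I)$ with $\underline{\mathrm{dim}}P_e+\underline{\mathrm{dim}}I=\delta$: the computation of Section 2.2 gives $2X_{P_e}X_{I}=(2-t)X_{E}+\sum_{k=1}^{t}X_{E_k}+(\text{lower})$, where $t$ is the number of non-homogeneous tubes and $2-t=\chi(\mathbb{P}^1)-t$ is the Euler characteristic of the homogeneous locus of $\mathbb{P}\mathrm{Ext}^1(I,P_e)$. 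For $\widetilde{D}_n$ and $\widetilde{E}_m$ one has $t=3$, so $X_\delta$ appears with the explicitly nonzero coefficient $-\tfrac{1}{2}b_1b_2$, and subtracting the purely in-tube expansion of Proposition \ref{4} (in which $X_\delta$ does not occur at all) isolates $X_\delta=X_{\delta_{i,1}}+(\text{lower})$. This Euler-characteristic input is the heart of the proposition and is absent from your argument.

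A second omission: even granting a nonzero leading coefficient, your argument does not cover type $\widetilde{A}_{p,p}$. There $t=2$, so $2-t=0$ and the product $X_{P_e}X_I$ carries no information about $X_\delta$; the paper must instead invoke the difference property (Theorem \ref{difference}, proved in the appendix) for this case. Any complete proof of Proposition \ref{5} has to treat $\widetilde{A}_{p,p}$ separately, and your proposal does not.
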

\begin{proof}
Let $Q$ be of type $\widetilde{D}_n(n\geq 4)$ or
$\widetilde{E}_m(m=6,7,8)$ and $\delta=(v_1,v_2,\cdots,v_n)$,  as
in the proof of  Proposition \ref{4}, we have

\begin{eqnarray*}
   && X^{v_1}_{S_1}X^{v_2}_{S_2}\cdots X^{v_n}_{S_n}\nonumber\\
   &=& (a_1X_{E^{(i)}_{1}}+\sum_{\mathrm{\underline{dim}}L'\prec
\mathrm{\underline{dim}}E^{(i)}_{1}}a_{L'}X_{L'})\cdots
(a_{r_i}X_{E^{(i)}_{r_i}}+\sum_{\mathrm{\underline{dim}}L''\prec
\mathrm{\underline{dim}}E^{(i)}_{r_i}}a_{L''}X_{L''})\nonumber \\
   &=& a_1\cdots a_{r_i}X_{\delta_{i,1}}+\sum_{\mathrm{\underline{dim}}M\prec
\delta}a_MX_M.
\end{eqnarray*}
where $a_i's\neq 0$.
 On the other hand, by the discussion in Section
2.2 and Proposition \ref{4}, we have
\begin{eqnarray*}
   && X^{v_1}_{S_1}X^{v_2}_{S_2}\cdots X^{v_n}_{S_n}\nonumber \\
   &=& (b_1X_{P_{e}}+\sum_{\mathrm{\underline{dim}}L'\prec
\mathrm{\underline{dim}}P_{e}}b_{L'}X_{L'})(b_2X_{I}+\sum_{\mathrm{\underline{dim}}L''\prec
\mathrm{\underline{dim}}I}b_{L''}X_{L''})\nonumber \\
   &=& \frac{1}{2}b_1b_2(-X_{\delta}+\sum^{3}_{k=1}X_{\delta_{k,1}})+\sum_{\mathrm{\underline{dim}}N\prec
\delta}b_{N}X_{N}\nonumber \\
   &=& -\frac{1}{2}b_1b_2X_{\delta}+\frac{3}{2}b_1b_2X_{\delta_{i,1}}+\sum_{\mathrm{\underline{dim}}N'\prec
\delta}b_{N'}X_{N'}.
\end{eqnarray*}
where $b_i's\neq 0$.
 Thus
$$a_1\cdots a_nX_{\delta_{i,1}}+\sum_{\mathrm{\underline{dim}}M\prec
\delta}a_MX_M=-\frac{1}{2}b_1b_2X_{\delta}+\frac{3}{2}b_1b_2X_{\delta_{i,1}}+\sum_{\mathrm{\underline{dim}}N'\prec
\delta}b_{N'}X_{N'}.$$ This deduces the following identity
$$\frac{1}{2}b_1b_2X_{\delta} =(\frac{3}{2}b_1b_2-a_1\cdots a_n)X_{\delta_{i,1}}+\sum_{\mathrm{\underline{dim}}N'\prec
\delta}b_{N'}X_{N'}-\sum_{\mathrm{\underline{dim}}M\prec
\delta}a_MX_M.$$ Note that $b_i's\neq 0$. Therefore by Proposition
\ref{2} and Theorem \ref{d}, we have
$$X_{\delta}=X_{\delta_{i,1}}+\sum_{\mathrm{\underline{dim}}M'\prec
\delta}a_{M'}X_{M'}.$$ Then we can finish the proof by induction
as in the proof of Proposition \ref{4}.

When  $Q$ is of type $\widetilde{A}_{p,p}$, we can prove it by
Theorem \ref{difference}.
\end{proof}

\begin{Prop}\label{6}
Let $Q$ be a tame quiver with the \emph{alternating} orientation. If
$\mathrm{\underline{dim}}(T_1\oplus
R_1)=\mathrm{\underline{dim}}(T_2\oplus R_2)$ where $R_i$ are 0 or
any regular exceptional modules, $T_i$ are 0 or any indecomposable
regular modules with self-extension in non-homogeneous tubes and
there are no extension between $R_i$ and $T_i$, then
$$X_{T_1\oplus R_1}=X_{T_2\oplus R_2}+\sum_{\mathrm{\underline{dim}}R\prec
\mathrm{\underline{dim}}(T_2\oplus R_2)}a_{R}X_{R}$$ where
$a_{R}\in \mathbb{Q}$.
\end{Prop}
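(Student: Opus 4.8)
The plan is to prove that, no matter which admissible decomposition is chosen, $X_{T_i\oplus R_i}$ equals one fixed ``standard'' element modulo strictly lower terms. Let $\ud=\mathrm{\underline{dim}}(T_i\oplus R_i)$ and let $\ud=n\delta+\ud_0$ be its canonical decomposition \cite{Kac}, where $\ud_0$ is the dimension vector of the unique rigid regular module $M_0$. The target identity is
$$X_{T_i\oplus R_i}=X_{n\delta}X_{M_0}+\sum_{\mathrm{\underline{dim}}R\prec\ud}a_RX_R,$$
with the same leading element $X_{n\delta}X_{M_0}$ for $i=1,2$; subtracting the two expansions then proves the proposition. Since the Caldero--Chapton map satisfies $X_{M\oplus N}=X_MX_N$, I may write $X_{T_i\oplus R_i}=X_{T_i}X_{R_i}$ and treat the factors separately. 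Set $T_i=E^{(i)}[m_i]$ with $E^{(i)}$ regular simple in a non-homogeneous tube of rank $r_i$, and $m_i=n_ir_i+s_i$ with $0\le s_i<r_i$; self-extension of $T_i$ forces $n_i\ge1$. (If some $T_i=0$ then $\ud$ is rigid, $R_1\cong R_2$, and the claim is trivial, so I assume both $T_i\neq0$.)

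First I would peel off the rigid part of $T_i$. By the isomorphism $\mathrm{Ext}^1_{\mc(Q)}(M,N)\cong\mathrm{Ext}^1_{\bbc Q}(M,N)\oplus\mathrm{Hom}_{\bbc Q}(M,\tau N)$ of \cite{BMRRT} and Auslander--Reiten duality, the absence of extensions between $T_i$ and $R_i$ gives $\mathrm{Ext}^1_{\bbc Q}(T_i,R_i)=\mathrm{Ext}^1_{\bbc Q}(R_i,T_i)=0$. Consider the two short exact sequences in the tube
$$0\longrightarrow E^{(i)}[n_ir_i]\longrightarrow E^{(i)}[m_i]\longrightarrow E^{(i)}[s_i]\longrightarrow 0,\qquad 0\longrightarrow E^{(i)}[s_i]\longrightarrow E^{(i)}[m_i]\longrightarrow (\tau^{-s_i}E^{(i)})[n_ir_i]\longrightarrow 0,$$
which exhibit $E^{(i)}[s_i]$ as a quotient and as a submodule of $T_i$. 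Applying $\mathrm{Hom}_{\bbc Q}(R_i,-)$ to the first and $\mathrm{Hom}_{\bbc Q}(-,R_i)$ to the second and using $\mathrm{Ext}^2_{\bbc Q}=0$, the groups $\mathrm{Ext}^1_{\bbc Q}(R_i,E^{(i)}[s_i])$ and $\mathrm{Ext}^1_{\bbc Q}(E^{(i)}[s_i],R_i)$ are realised as quotients of $\mathrm{Ext}^1_{\bbc Q}(R_i,T_i)=0$ and $\mathrm{Ext}^1_{\bbc Q}(T_i,R_i)=0$, hence vanish. Thus $E^{(i)}[s_i]\oplus R_i$ is rigid regular; by uniqueness of the rigid regular module with a prescribed dimension vector it is isomorphic to a fixed $M_0$, and comparing $\ud=n_i\delta+\mathrm{\underline{dim}}(E^{(i)}[s_i]\oplus R_i)$ with the canonical decomposition forces $n_i=n$ and $\mathrm{\underline{dim}}(E^{(i)}[s_i]\oplus R_i)=\ud_0$ for all $i$. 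In particular $X_{E^{(i)}[s_i]}X_{R_i}=X_{E^{(i)}[s_i]\oplus R_i}=X_{M_0}$ is independent of $i$.

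The heart of the argument is to reconnect $X_{T_i}$ with the dimension-$n\delta$ variable. Using the first sequence with $M=E^{(i)}[s_i]$ and $N=E^{(i)}[n_ir_i]$, a count in the tube gives $\dim_{\bbc}\mathrm{Ext}^1_{\bbc Q}(E^{(i)}[s_i],E^{(i)}[n_ir_i])=1$, the unique non-split extension having indecomposable middle term $T_i=E^{(i)}[m_i]$. Feeding this into Theorem \ref{XX}(1), the single top-dimensional term is $X_{T_i}$ with coefficient $1$, while by Lemma \ref{induction} all remaining terms lie strictly below $\mathrm{\underline{dim}}T_i$, so
$$X_{E^{(i)}[s_i]}X_{E^{(i)}[n_ir_i]}=X_{T_i}+\sum_{\mathrm{\underline{dim}}W\prec\mathrm{\underline{dim}}T_i}c_WX_W$$
(the case $s_i=0$ is immediate, since then $T_i=E^{(i)}[n_ir_i]$ and $R_i\cong M_0$). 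On the other hand, Propositions \ref{4} and \ref{5} give $X_{E^{(i)}[n_ir_i]}=X_{n\delta}+(\text{lower terms})$.

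Combining these, and repeatedly using that a product $X_YX_Z$ expands into generalized cluster variables of dimension vectors $\preceq\mathrm{\underline{dim}}Y+\mathrm{\underline{dim}}Z$ (by Theorem \ref{d} and Lemma \ref{induction}), so that a lower term multiplied by a rigid variable stays strictly below $\ud$, I obtain
$$X_{T_i\oplus R_i}=X_{E^{(i)}[n_ir_i]}\bigl(X_{E^{(i)}[s_i]}X_{R_i}\bigr)+(\text{lower})=X_{n\delta}X_{M_0}+\sum_{\mathrm{\underline{dim}}R\prec\ud}a_RX_R,$$
which is the desired form with leading element independent of $i$. The main obstacle is the middle step: establishing that the tube multiplication produces exactly one top-dimensional term, namely $X_{T_i}$ with coefficient $1$. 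This rests on the one-dimensionality of $\mathrm{Ext}^1_{\bbc Q}(E^{(i)}[s_i],E^{(i)}[n_ir_i])$ and the identification of its middle term with the indecomposable $E^{(i)}[m_i]$, together with the degree bookkeeping guaranteeing that every correction term genuinely drops in the $\prec$-order.
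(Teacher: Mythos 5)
Your proof is correct, but it follows a genuinely different route from the paper's. The paper proves Proposition \ref{6} by writing $X_{S_1}^{d_1}\cdots X_{S_n}^{d_n}$ in two ways, once as $(a_1X_{E^{(i)}_1}+\cdots)\cdots(a_{R_1}X_{R_1}+\cdots)=aX_{T_1\oplus R_1}+(\text{lower})$ and once with the decomposition adapted to $T_2\oplus R_2$, and then identifies the two leading coefficients by extracting the monomial $x^{-\ud}$ via Proposition \ref{2} (constant term of the numerator is $1$) and Theorem \ref{d} (denominator theorem); no normal form for the leading term is ever named. You instead normalize each $X_{T_i\oplus R_i}$ against the common element $X_{n\delta}X_{M_0}$ attached to the canonical decomposition $\ud=n\delta+\ud_0$: the rigidity of $E^{(i)}[s_i]\oplus R_i$ (your Ext-vanishing argument via the two tube filtrations is sound, since $\mathrm{Ext}^2_{\bbc Q}=0$ and rigid modules are determined by their dimension vector), the one-dimensionality of $\mathrm{Ext}^1_{\bbc Q}(E^{(i)}[s_i],E^{(i)}[nr_i])$ with indecomposable middle term $T_i$, and Proposition \ref{4} to align the tubes. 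What your approach buys is a stronger and more structural statement, $X_{T\oplus R}=X_{n\delta}X_{M_0}+(\text{lower})$, which essentially anticipates the unipotent comparison with Dupont's generic variables in Corollary \ref{unipotent}; what it costs is heavier machinery. In particular your citation of Proposition \ref{5} is unnecessary (and better avoided): since both $T_1$ and $T_2$ lie in non-homogeneous tubes, Proposition \ref{4} alone identifies the leading terms across tubes, whereas Proposition \ref{5} for type $\widetilde{A}_{p,p}$ rests on the difference property proved only in the appendix. If you keep $X_{n\delta_{1,1}}X_{M_0}$ rather than $X_{n\delta}X_{M_0}$ as your reference element, the extra dependency disappears and your argument is self-contained at the same level as the paper's.
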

\begin{proof}
Suppose $\mathrm{\underline{dim}}(T_1\oplus
R_1)=(d_1,d_2,\cdots,d_n)$,  as in  the proof of Proposition
\ref{4}, we have
\begin{eqnarray*}
   && X^{d_1}_{S_1}X^{d_2}_{S_2}\cdots X^{d_n}_{S_n}\nonumber \\
   &=& (a_1X_{E^{(i)}_{1}}+\sum_{\mathrm{\underline{dim}}L'\prec
\mathrm{\underline{dim}}E^{(i)}_{1}}a_{L'}X_{L'})\cdots
(a_sX_{E^{(i)}_{s}}+\sum_{\mathrm{\underline{dim}}L''\prec
\mathrm{\underline{dim}}E^{(i)}_{s}}a_{L''}X_{L''})\nonumber \\
   && \times(a_{R_1}X_{R_1}+\sum_{\mathrm{\underline{dim}}L'''\prec
\mathrm{\underline{dim}}R_1}a_{L'''}X_{L'''})\nonumber \\
   &=& aX_{T_1\oplus R_1}+\sum_{\mathrm{\underline{dim}}L\prec
\mathrm{\underline{dim}}(T_1\oplus R_1)}a_{L}X_{L}.
\end{eqnarray*}
where $a\neq 0$ as the discussion in the proof of Proposition
\ref{4}. Similarly we have
\begin{eqnarray*}
   && X^{d_1}_{S_1}X^{d_2}_{S_2}\cdots X^{d_n}_{S_n} \nonumber \\
   &=& (b_1X_{E^{(j)}_{1}}+\sum_{\mathrm{\underline{dim}}M'\prec
\mathrm{\underline{dim}}E^{(j)}_{1}}b_{M'}X_{M'})\cdots
(b_tX_{E^{(j)}_{t}}+\sum_{\mathrm{\underline{dim}}M''\prec
\mathrm{\underline{dim}}E^{(j)}_{t}}b_{M''}X_{M''})\nonumber \\
   && \times(b_{R_2}X_{R_2}+\sum_{\mathrm{\underline{dim}}M'''\prec
\mathrm{\underline{dim}}R_2}b_{M'''}X_{M'''})\nonumber \\
   &=& bX_{T_2\oplus R_2}+\sum_{\mathrm{\underline{dim}}L\prec
\mathrm{\underline{dim}}(T_2\oplus R_2)}b_{M}X_{M}.
\end{eqnarray*}
where $b\neq 0.$ Thus
$$aX_{T_1\oplus R_1}+\sum_{\mathrm{\underline{dim}}L\prec
\mathrm{\underline{dim}}(T_1\oplus R_1)}a_{L}X_{L}=bX_{T_2\oplus
R_2}+\sum_{\mathrm{\underline{dim}}L\prec
\mathrm{\underline{dim}}(T_2\oplus R_2)}b_{M}X_{M}.$$
 Therefore by Proposition \ref{2} and Theorem \ref{d}, we have
$$X_{T_1\oplus R_1}=X_{T_2\oplus R_2}+\sum_{\mathrm{\underline{dim}}R\prec
\mathrm{\underline{dim}}(T_2\oplus R_2)}a_{R}X_{R}.$$
\end{proof}

\section{A $\mathbb{Z}$-basis for the cluster algebra of  affine type} In
this section, we will construct a $\bbz$-basis for the cluster
algebra of a tame quiver.
\begin{Definition}\cite{CK2005}
Let $Q$ be an acyclic quiver and $B=R-R^{tr}$. The quiver $Q$ is
called graded if there exists a linear form $\epsilon$ on
$\mathbb{Z}^{n}$ such that $\epsilon(B\alpha_i)<0$ for any $1\leq
i\leq n$ where $\alpha_i$ denotes the $i$-th vector of the
canonical basis of $\mathbb{Z}^{n}$.
\end{Definition}

\begin{Thm}\cite{CK2005}\label{40}
Let Q be a graded quiver and $\{M_1,\cdots,M_r\}$ a family objects
in $\mc(Q)$ such that $\mathrm{\underline{dim}}M_i\neq
\mathrm{\underline{dim}}M_j$ for $i\neq j$, then
$X_{M_1},\cdots,X_{M_r}$ are linearly independent over
$\mathbb{Q}$.
\end{Thm}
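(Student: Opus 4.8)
The plan is to prove linear independence via a triangularity/leading-term argument, exploiting the denominator theorem (Theorem~\ref{d}) together with the grading to linearly order the relevant monomials.

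The plan is to prove linear independence by a \emph{leading-term} (triangularity) argument with respect to the grading supplied by the hypothesis. Writing a general object as $M=M_0\oplus P_M[1]$ with $P_M=\bigoplus_i m_iP_i$ and $\mathbf m=(m_i)$ (so $\underline{\mathrm{dim}}M=\underline{\mathrm{dim}}M_0-\mathbf m$), the reformulated Caldero--Chapton map gives
$$X_{M}=\sum_{\ue}\chi(\mathrm{Gr}_{\ue}(M_0))\,x^{\ue R+(\underline{\mathrm{dim}}M_0-\ue)R^{tr}-\underline{\mathrm{dim}}M_0+\mathbf m}.$$
The only part of the exponent that varies with $\ue$ is $\ue(R-R^{tr})=\ue B$. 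Since $B$ is skew-symmetric, the grading condition $\epsilon(B\alpha_k)<0$ yields the identity $\epsilon(\ue B)=-\sum_k(\ue)_k\,\epsilon(B\alpha_k)$, which is strictly positive for every nonzero effective $\ue$ (necessarily $\ue\ge0$). Hence, viewing $\epsilon$ as a $\bbz$-grading on $\bbq(x_1,\dots,x_n)$ with $\deg x_i=\epsilon(\alpha_i)$, the variable $X_M$ has a \emph{unique} monomial of minimal $\epsilon$-degree, the one coming from $\ue=0$, whose coefficient $\chi(\mathrm{Gr}_0(M_0))$ equals $1$.

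First I would treat the case that all $M_i$ are modules. There the minimal monomial of $X_{M_i}$ has exponent $w_i=\underline{\mathrm{dim}}M_i\,(R^{tr}-I)$. Since $Q$ is acyclic, one may order the vertices so that $R$ is strictly upper triangular; then $I-R^{tr}$ is unipotent, hence invertible, so $\underline{\mathrm{dim}}M\mapsto w(M)$ is injective and the $w_i$ are pairwise distinct. Now suppose $\sum_i\lambda_iX_{M_i}=0$ with some $\lambda_i\neq0$, and let $m$ be the least value of $\epsilon(w_i)$ over indices with $\lambda_i\neq0$. For indices with $\epsilon(w_i)>m$ every monomial of $X_{M_i}$ has $\epsilon$-degree $>m$, while for those with $\epsilon(w_i)=m$ the only monomial of $\epsilon$-degree $m$ is $x^{w_i}$. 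Thus the degree-$m$ component of the relation is $\sum_{\epsilon(w_i)=m}\lambda_ix^{w_i}=0$, a sum of \emph{distinct} monomials, forcing each such $\lambda_i=0$ — a contradiction. This settles independence over $\bbq$ for modules.

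The main obstacle is the passage to general objects of $\mc(Q)$, where shifted projective summands are allowed. Then the minimal exponent is $w(M)=-\underline{\mathrm{dim}}M+\underline{\mathrm{dim}}M_0\,R^{tr}$, which depends on $M_0$ and not on $\underline{\mathrm{dim}}M$ alone; consequently two objects with \emph{different} dimension vectors can share the same minimal monomial (for example $S_i$ and a suitable $M_0\oplus P_j[1]$), so the leading terms no longer separate the $X_{M_i}$ and the argument above stalls — one checks similarly that no single linear functional of the exponent recovers $\underline{\mathrm{dim}}M$. To overcome this I would invoke the denominator theorem (Theorem~\ref{d}): the denominator $x^{\underline{\mathrm{dim}}M_i}$ is itself an invariant of $X_{M_i}$, and the $\underline{\mathrm{dim}}M_i$ are distinct by hypothesis. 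The idea is to refine the $\epsilon$-filtration by a coordinatewise peeling on the denominator vector — after clearing denominators by a common monomial $x^{\mathbf c}$, specialize variables (e.g.\ set $x_k=0$) to isolate the summands whose $k$-th denominator exponent is maximal, then induct. Making this separation rigorous, so that the distinctness of the full denominator vectors (rather than a single extremal monomial) is what precludes cancellation, is the delicate heart of the proof, and it is exactly where Theorem~\ref{d} becomes indispensable.
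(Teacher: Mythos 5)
The paper gives no proof of this statement---it is imported verbatim from [CK2005]---so your attempt can only be measured against the standard Caldero--Keller argument. Your treatment of the case where all $M_i$ are \emph{modules} is correct and complete: the computation $\epsilon(\ue B)=-\sum_k e_k\,\epsilon(B\alpha_k)>0$ for $0\neq\ue\geq 0$, the identification of the unique $\epsilon$-minimal monomial $x^{\underline{\mathrm{dim}}M(R^{tr}-I)}$ with coefficient $\chi(\mathrm{Gr}_0(M))=1$, the injectivity of $\underline{\mathrm{dim}}M\mapsto \underline{\mathrm{dim}}M(R^{tr}-I)$ via unipotence of $I-R^{tr}$, and the extraction of the lowest homogeneous component are all sound. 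This is exactly the grading/leading-term mechanism of Caldero--Keller.

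The genuine gap is the case of general objects $M=M_0\oplus P_M[1]$, and you are right that it cannot be waved away: the minimal exponent becomes $w(M)=\underline{\mathrm{dim}}M_0R^{tr}-\underline{\mathrm{dim}}M$, and distinct dimension vectors no longer force distinct leading monomials. Concretely, for the Kronecker quiver $1\rightrightarrows 2$ one has $X_{P_2\oplus P_2[1]}=x_1^2+1$ and $X_{P_1[1]^{\oplus 2}}=x_1^2$, with dimension vectors $(0,0)\neq(-2,0)$ but identical $\epsilon$-minimal monomial $x_1^2$; replacing the minimal by the maximal monomial only moves the collision elsewhere, and for quivers with $\det B=0$ (e.g.\ $\widetilde{D}_4$, which has an odd number of vertices) even the pair (minimal, maximal) need not separate. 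Your proposed remedy---clearing denominators and specializing $x_k=0$ to ``peel'' by the denominator vector---is not a proof: after setting $x_k=0$ the surviving terms $\lambda_iP_i|_{x_k=0}$ are no longer numerators of generalized cluster variables, so no induction hypothesis applies, the surviving index set may be all of $\{1,\dots,r\}$, and the claim that the distinct denominator vectors of Theorem~\ref{d} preclude cancellation is essentially a restatement of the linear independence being proved. Since the paper applies Theorem~\ref{40} precisely to families containing non-module objects (the exceptional objects $L$ in $\mathcal{S}(Q)$ carry $\tau P_i=P_i[1]$ summands, e.g.\ all cluster monomials from non-initial clusters), the unresolved case is the one that matters, and an additional idea---the tie-breaking analysis carried out in [CK2005] itself---is still required.
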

Now let $Q$ be a tame quiver with the \emph{alternating}
orientation. Note that the quiver $Q$ we consider is graded. Define
the set $\mathcal{S}(Q)$ to be
$$\{X_{L},X_{T\oplus R}|\mathrm{\underline{dim}}(T_1\oplus
R_1)\neq\mathrm{\underline{dim}}(T_2\oplus
R_2),\mathrm{Ext}^{1}_{\mc(Q)}(T,R)=0,\mathrm{Ext}^{1}_{\mc(Q)}(L,L)=0\}$$
where $L$ is any non-regular exceptional object, $R$ is $0$ or any
regular exceptional module and $T$ is $0$ or any indecomposable
regular module with self-extension.

To prove Theorem \ref{7}, Firstly we  need to prove the dimension
vectors of these objects associated to the corresponding elements
in $\mathcal{S}(Q)$ are different by Theorem \ref{40}.
\begin{Prop}\label{yy}
Let  $T\oplus R$ satisfy $X_{T\oplus R} \in \mathcal{S}(Q)$. If
$R\neq 0,$ then $\underline{\mathrm{dim}}M\neq m\delta$ for $m\in
\bbn$.
\end{Prop}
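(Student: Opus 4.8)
Let me first understand what Proposition \ref{yy} is claiming.The plan is to argue by contradiction using the Euler form $\langle-,-\rangle$ of $\bbc Q$. Set $M:=T\oplus R$, the object whose dimension vector occurs in the statement, and suppose $\underline{\mathrm{dim}}M=m\delta$ for some $m\in\bbn$. Since $\delta$ spans the radical of the symmetrized Euler form, $\langle\delta,\delta\rangle=0$, so $\langle\underline{\mathrm{dim}}M,\underline{\mathrm{dim}}M\rangle=m^{2}\langle\delta,\delta\rangle=0$. As $\bbc Q$ is hereditary, $\langle\underline{\mathrm{dim}}M,\underline{\mathrm{dim}}M\rangle=\mathrm{dim}_{\bbc}\mathrm{Hom}(M,M)-\mathrm{dim}_{\bbc}\mathrm{Ext}^{1}(M,M)$, so the hypothesis forces
$$\mathrm{dim}_{\bbc}\mathrm{Hom}(M,M)=\mathrm{dim}_{\bbc}\mathrm{Ext}^{1}(M,M).$$

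Next I would collapse the right-hand side using the rigidity encoded in $\mathcal{S}(Q)$. Decomposing along $T$ and $R$ gives $\mathrm{Ext}^{1}(M,M)=\mathrm{Ext}^{1}(T,T)\oplus\mathrm{Ext}^{1}(T,R)\oplus\mathrm{Ext}^{1}(R,T)\oplus\mathrm{Ext}^{1}(R,R)$. Here $\mathrm{Ext}^{1}(R,R)=0$ because $R$ is regular exceptional, and the condition $\mathrm{Ext}^{1}_{\mc(Q)}(T,R)=0$ yields the vanishing of both $\mathrm{Ext}^{1}_{\bbc Q}(T,R)$ and $\mathrm{Ext}^{1}_{\bbc Q}(R,T)$: indeed, by the Section~2 Lemma $\mathrm{Ext}^{1}_{\mc(Q)}(T,R)\cong\mathrm{Ext}^{1}_{\bbc Q}(T,R)\oplus\mathrm{Hom}_{\bbc Q}(T,\tau R)$, and $\mathrm{Hom}_{\bbc Q}(T,\tau R)\cong D\mathrm{Ext}^{1}_{\bbc Q}(R,T)$ by Auslander--Reiten duality. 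Thus $\mathrm{Ext}^{1}(M,M)=\mathrm{Ext}^{1}(T,T)$, and expanding $\mathrm{Hom}(M,M)$ the displayed identity becomes
$$\mathrm{dim}_{\bbc}\mathrm{End}(T)+\mathrm{dim}_{\bbc}\mathrm{Hom}(T,R)+\mathrm{dim}_{\bbc}\mathrm{Hom}(R,T)+\mathrm{dim}_{\bbc}\mathrm{End}(R)=\mathrm{dim}_{\bbc}\mathrm{Ext}^{1}(T,T).$$

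I would then feed in the Euler form of $T$ alone, namely $\mathrm{dim}_{\bbc}\mathrm{Ext}^{1}(T,T)-\mathrm{dim}_{\bbc}\mathrm{End}(T)=-\langle\underline{\mathrm{dim}}T,\underline{\mathrm{dim}}T\rangle$, to rewrite the identity as
$$\mathrm{dim}_{\bbc}\mathrm{Hom}(T,R)+\mathrm{dim}_{\bbc}\mathrm{Hom}(R,T)+\mathrm{dim}_{\bbc}\mathrm{End}(R)=-\langle\underline{\mathrm{dim}}T,\underline{\mathrm{dim}}T\rangle.$$
The left-hand side is at least $\mathrm{dim}_{\bbc}\mathrm{End}(R)\geq 1$ since $R\neq 0$, hence strictly positive. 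For the right-hand side, $T$ is an indecomposable regular module, so $\underline{\mathrm{dim}}T$ is a positive root of the affine root system; writing $T=E[\ell]$ in a tube of rank $r$ and decomposing $\underline{\mathrm{dim}}T=a\delta+w$ with $w$ either $0$ or the dimension vector of a regular exceptional module of quasi-length $<r$, one computes (using $\langle\delta,\delta\rangle=0$ and $\langle\delta,w\rangle+\langle w,\delta\rangle=0$) that $\langle\underline{\mathrm{dim}}T,\underline{\mathrm{dim}}T\rangle=\langle w,w\rangle\in\{0,1\}$. Therefore the right-hand side is $\leq 0$, contradicting the positivity of the left-hand side; this contradiction establishes $\underline{\mathrm{dim}}(T\oplus R)\neq m\delta$.

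The computation is short, and the one delicate point is the second step: one must convert the single cluster-category hypothesis $\mathrm{Ext}^{1}_{\mc(Q)}(T,R)=0$ into the vanishing of \emph{both} module-level groups $\mathrm{Ext}^{1}_{\bbc Q}(T,R)$ and $\mathrm{Ext}^{1}_{\bbc Q}(R,T)$, since only then does $\mathrm{Ext}^{1}(M,M)$ reduce to $\mathrm{Ext}^{1}(T,T)$. I note that the self-extension of $T$ plays no role in producing the contradiction; what is actually used is only that $T$ is regular indecomposable, which forces $\langle\underline{\mathrm{dim}}T,\underline{\mathrm{dim}}T\rangle\geq 0$, together with $R$ being a nonzero rigid regular module having no extensions to or from $T$.
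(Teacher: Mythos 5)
Your proof is correct, and it is the same kind of argument as the paper's: both derive a contradiction from an Euler-form identity using that $\delta$ lies in the radical of the symmetrized form together with the rigidity hypotheses encoded in $\mathcal{S}(Q)$. The difference is in the choice of pairing. The paper picks a single indecomposable summand $R_0$ of $R$, writes $M=M'\oplus R_0$, and compares $(m\delta,\underline{\mathrm{dim}}R_0)=0$ with $(\underline{\mathrm{dim}}M',\underline{\mathrm{dim}}R_0)+(\underline{\mathrm{dim}}R_0,\underline{\mathrm{dim}}R_0)>0$; since $R_0$ is exceptional the second summand is already $2$, and the first is a sum of $\mathrm{Hom}$-dimensions (nonnegative) once the cluster-category Ext-vanishing is unpacked, so positivity is immediate and no information about $T$ beyond $\mathrm{Ext}^1_{\mc(Q)}(T,R)=0$ is needed. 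You instead pair $m\delta$ with itself, which forces you to carry the term $\langle\underline{\mathrm{dim}}T,\underline{\mathrm{dim}}T\rangle$ and to verify separately that it is nonnegative (your computation $\langle a\delta+w,a\delta+w\rangle=\langle w,w\rangle\in\{0,1\}$ is correct for indecomposable regular $T$). Both routes rely on the same delicate point you flag: converting $\mathrm{Ext}^1_{\mc(Q)}(T,R)=0$ into the vanishing of both module-level extension groups via the BMRRT decomposition and Auslander--Reiten duality. Your version is slightly longer but no less valid; the paper's choice of $R_0$ as the test vector is just the more economical one.
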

\begin{proof}
 Let $R_0$ be an indecomposable
regular exceptional module as a non-zero direct summand of $R$. We
set $M=T\oplus R=M'\oplus R_0$ and
$m\delta=\underline{\mathrm{dim}}M$. Since
$\mathrm{Ext}_{\mathcal{C}(Q)}^1(T,R)=0$ and
$\mathrm{Ext}_{\mathcal{C}(Q)}^1(R,R)=0$, we have
$$(\underline{\mathrm{dim}}M,\mathrm{\underline{dim}}R_0)
=(\underline{\mathrm{dim}}M',\mathrm{\underline{dim}}R_0)+(\underline{\mathrm{dim}}R_0,\mathrm{\underline{dim}}R_0)>0$$
where $(\alpha,\beta)=\left <\alpha,\beta\right
>+\left <\beta,\alpha\right >$ for $\alpha,\beta\in
\mathbb{Z}^{n}.$ It is a contradiction to
$(m\delta,\mathrm{\underline{dim}}R_0)=0$.
\end{proof}

\begin{Prop}\label{9}
Let $M$ be a  regular module associated to some element in
$\mathcal{S}(Q)$ and $L$ be a non-regular exceptional object in
$\mc(Q)$. Then $\underline{\mathrm{dim}}M\neq
\underline{\mathrm{dim}}L$.

\end{Prop}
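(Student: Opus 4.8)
The plan is to compare dimension vectors by decomposing each vector according to the canonical decomposition and the preprojective/regular/preinjective trichotomy of the root system of the tame quiver $Q$. Recall that $M$ is a regular module attached to an element $X_{T\oplus R}\in\mathcal{S}(Q)$, so $M=T\oplus R$ where $T$ is $0$ or an indecomposable regular module with self-extension and $R$ is $0$ or a regular exceptional module, with $\mathrm{Ext}^1_{\mc(Q)}(T,R)=0$. The key point I want to exploit is that $\underline{\mathrm{dim}}M$ is then a nonnegative integer combination of dimension vectors of regular modules, hence lies in the ``regular cone'' of the root lattice; more precisely, since both $T$ and $R$ are regular, every indecomposable summand has dimension vector a positive real root (or $n\delta$ for an imaginary summand) supported in a tube, and $\langle\delta,\underline{\mathrm{dim}}M\rangle=\langle\underline{\mathrm{dim}}M,\delta\rangle=0$ because $\delta$ is in the radical of the symmetrized Euler form and $M$ is regular.

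\medskip

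First I would record the orthogonality relation for regular modules: for any regular $\bbc Q$-module $M$ one has $(\delta,\underline{\mathrm{dim}}M)=0$, where $(-,-)$ is the symmetrization of the Euler form, exactly as used in the proof of Proposition \ref{yy}. By contrast, for a non-regular exceptional object $L$ in $\mc(Q)$ I would show that $(\delta,\underline{\mathrm{dim}}L)\neq 0$. Indeed, if $L$ is an indecomposable preprojective or preinjective module (or a shifted projective $P[1]$), then $\underline{\mathrm{dim}}L$ is a preprojective/preinjective real root, and such roots satisfy $(\delta,\underline{\mathrm{dim}}L)\neq 0$ — the defect (the value of the linear form $\langle\delta,-\rangle-\langle-,\delta\rangle$, or equivalently $(\delta,-)$ up to sign) is strictly negative on preprojectives and strictly positive on preinjectives, and is zero precisely on the regular part. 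This defect computation is the crux: it distinguishes the regular summands from the non-regular exceptional object purely at the level of the root lattice.

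\medskip

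Then I would assemble the argument. Suppose for contradiction that $\underline{\mathrm{dim}}M=\underline{\mathrm{dim}}L$. Applying $(\delta,-)$ to both sides gives $0=(\delta,\underline{\mathrm{dim}}M)=(\delta,\underline{\mathrm{dim}}L)$, which contradicts $(\delta,\underline{\mathrm{dim}}L)\neq 0$ established above. One subtlety to handle carefully is the case where $L$ is a shifted object $P[1]$: here $\underline{\mathrm{dim}}L=-\underline{\mathrm{dim}}(P/\mathrm{rad}\,P)$ by the extended definition of dimension vector on $\mc(Q)$, so I must check that the defect of this (possibly negative) vector is still nonzero, which follows since $P/\mathrm{rad}\,P$ is a sum of simples whose defects do not all vanish. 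The main obstacle I anticipate is the bookkeeping of signs and the precise verification that the defect is nonzero for every non-regular exceptional object — including the shifted projectives — rather than any conceptual difficulty; once the defect form is pinned down, the contradiction is immediate.
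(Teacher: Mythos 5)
Your strategy rests on the claim that every non-regular exceptional object $L$ of $\mc(Q)$ has nonzero defect, and this claim is false as stated; moreover you only argue it for \emph{indecomposable} $L$. Two concrete problems. First, a non-regular exceptional object need not be indecomposable: it can be a rigid object with both preprojective and preinjective summands simultaneously (such rigid objects do exist in tame cluster categories), and since the defect $\partial=\langle\delta,-\rangle$ is strictly negative on indecomposable preprojectives and strictly positive on indecomposable preinjectives, the contributions can cancel, so additivity of the defect gives you nothing. You never address this mixed case. Second, even your indecomposable check fails for shifted projectives: for $Q$ of type $\widetilde{A}_{p,q}$ as in Section 8.2 the simple $S_{p+2}$ is a \emph{regular} simple, hence $\partial(S_{p+2})=0$ and $P_{p+2}[1]$ is a non-regular exceptional object of defect exactly $0$; your asserted verification that ``the defects of the simples in $P/\mathrm{rad}\,P$ do not all vanish'' is simply wrong here. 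There is also a notational error that, read literally, makes the whole argument vacuous: since $\delta$ spans the radical of the symmetrized Euler form, $(\delta,\beta)=\langle\delta,\beta\rangle+\langle\beta,\delta\rangle=0$ for \emph{every} $\beta$, so ``$(\delta,\underline{\mathrm{dim}}L)\neq 0$'' is never true; the invariant you mean is the non-symmetric form $\langle\delta,-\rangle$, and it is not ``equivalent to $(\delta,-)$ up to sign.''

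For comparison, the paper avoids the defect entirely and argues in two steps. If $L$ has a shifted-projective summand $\tau P_i=P_i[1]$, rigidity forces the complementary module summand to vanish at the vertex $i$, so the $i$-th entry of $\underline{\mathrm{dim}}L$ is negative while $\underline{\mathrm{dim}}M\succeq 0$ --- a coordinate argument, not a defect argument (and this is the step your defect cannot see, by the $\widetilde{A}_{p,q}$ example above). If $L$ is a module, then exceptional modules are generic in their orbit, so $\underline{\mathrm{dim}}M=\underline{\mathrm{dim}}L$ would make $M$ a degeneration of $L$; the Zwara--Riedtmann exact sequences $0\to M\to L\oplus U\to U\to 0$ and its dual, with $U$, $V$ chosen minimally, show that $M$ maps nonzero to every indecomposable summand of $L$ and receives a nonzero map from every such summand, which rules out preprojective and preinjective summands of $L$ since $M$ is regular; hence $L$ would be regular, a contradiction. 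If you want to salvage a defect-style proof you would need an additional argument showing that a rigid \emph{module} with nonnegative dimension vector and total defect zero is necessarily regular, and that is not obvious and is not supplied.
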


\begin{proof}
If $L$ contains some $\tau P_i$ as its direct summand, we know
that
$$\underline{\mathrm{dim}}(\tau P_i)=(0,\cdots,0,-1,0,\cdots,0)$$
where the $i$-th component is $-1$. Suppose $L=\tau P_i\oplus\tau
P_{i_1}\cdots\tau P_{i_r}\oplus N$ where $N$ is an exceptional
module. Because  $L$ is an exceptional object, $ X_{\tau
P_i}X_{N}=X_{\tau P_i\oplus N}$ i.e.
$\mathrm{dim}_{\bbc}\mathrm{Hom}(P_i,N)=0$. Thus we have
$\mathrm{dim}_{\bbc} N(i)=0$ and $\mathrm{dim}_{\bbc}(\tau
P_i\oplus\tau P_{i_1}\cdots\tau P_{i_r}\oplus N)(i)\leq -1$.
However, $\underline{\mathrm{dim}}M\geq 0$. Therefore,
$\underline{\mathrm{dim}}M\neq \underline{\mathrm{dim}}L$.

If $L$ is a module. Suppose $\underline{\mathrm{dim}}M=
\underline{\mathrm{dim}}L$. Because $L$ is an exceptional module,
we know that $M$ belongs to the orbit of $L$ and then $M$ is a
degeneration of $L$. Hence, there exists some $\bbc Q$-module $U$
such that
$$ 0\longrightarrow
M\longrightarrow L\oplus U\longrightarrow U\longrightarrow 0$$ is
an exact sequence. Choose minimal $U$ so that we cannot separate
the following exact sequence
$$0\longrightarrow  0\longrightarrow U_1\longrightarrow U_1\longrightarrow
0$$ from the above short exact sequence. Thus  $M$ has a non-zero
map to every direct summand of $L$. Therefore $L$  has no
preprojective modules as direct summand because  $M$ is a regular
module.

Dually there exists a $\bbc Q$-module $V$ such that $$
0\longrightarrow V\longrightarrow V\oplus L\longrightarrow
M\longrightarrow 0$$ is an exact sequence. We can choose minimal
$V$ so that one cannot separate the following exact sequence
$$0\longrightarrow V_1\longrightarrow V_1\longrightarrow
0\longrightarrow 0$$ from the above short exact sequence. Thus
every direct summand of  $L$ has a non-zero map to $M$. Therefore
$L$  has no preinjective modules as direct summand because  M is a
regular module.

Therefore $L$ is a regular exceptional module, it is a
contradiction.
\end{proof}

Secondly, we need to prove that $\mathcal{S}(Q)$ is a
$\mathbb{Z}$-basis of $\mathcal{AH}(Q).$

\begin{Prop}\label{11}
$X_{M}X_{N}$ must be a $\mathbb{Z}$-combination of elements in the
set   $\mathcal{S}(Q)$ for any $M,N \in \mathcal{C}(Q).$
\end{Prop}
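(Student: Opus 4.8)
The plan is to reduce to products of generalized cluster variables of indecomposable objects and then induct on the dimension vector. By the multiplicativity $X_{A\oplus B}=X_AX_B$, writing $M$ and $N$ as direct sums of indecomposables, it suffices to prove that an arbitrary finite product $X_{M_1}\cdots X_{M_k}$ of generalized cluster variables of indecomposable objects $M_1,\dots,M_k$ lies in the $\bbz$-span of $\mathcal{S}(Q)$. Since $\bbn^{Q_0}$ is well-founded under $\prec$, I would argue by induction on $\ud=\sum_i\mathrm{\underline{dim}}M_i$, with a secondary induction measuring how far $W:=\bigoplus_i M_i$ is from being rigid of standard form, for instance $\sum_{i<j}\mathrm{dim}_\bbc\mathrm{Ext}^1_{\mc(Q)}(M_i,M_j)$ together with the number of indecomposable regular summands.

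For the base of the induction, a single indecomposable object already belongs to $\mathcal{S}(Q)$: an indecomposable non-regular object is exceptional, hence an admissible $L$; an indecomposable regular module is either regular exceptional (giving $R$ with $T=0$) or carries a self-extension (giving $T$ with $R=0$). More generally, if $W$ is rigid then its regular part is exceptional and its non-regular part is an exceptional object, so $X_{M_1}\cdots X_{M_k}=X_W$ is (a product of) elements of $\mathcal{S}(Q)$ and there is nothing to prove.

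In the inductive step I split $W=W_{\mathrm{nr}}\oplus W_{\mathrm{reg}}$ into its non-regular and regular parts. Whenever two of the factors carry a nonzero extension, I rewrite the corresponding product $X_{M_i}X_{M_j}$ by the cluster multiplication formula: Theorem \ref{XX} in general, or the cleaner Theorem \ref{CK} when the extension group is one-dimensional, or the Auslander--Reiten relation of Theorem \ref{AR formula}. The ``error'' summands $X_{V\oplus U\oplus I[-1]}$ have strictly smaller dimension vector by Lemma \ref{induction}, so after reinserting the untouched factors they are covered by the primary induction on $\ud$; the remaining middle terms keep the dimension vector $\ud$ but are more split, and are reached by the secondary induction. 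For the regular part I appeal to Propositions \ref{def1}, \ref{4}, \ref{5} and \ref{6}, which replace the generalized cluster variable of an arbitrary regular object by that of the standard form $T\oplus R$ (one indecomposable regular module with self-extension together with a rigid regular module in the prescribed tubes) modulo terms of strictly smaller dimension vector. Assembling these reductions, the unique top-dimensional contribution is the standard element of $\mathcal{S}(Q)$ attached to $\ud$, and every other term is handled by the induction hypothesis.

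The main obstacle is integrality of the coefficients. Theorem \ref{XX} divides by $\mathrm{dim}_\bbc\mathrm{Ext}^1_{\bbc Q}(M,N)$, which can exceed $1$, and Propositions \ref{4}--\ref{6} are only stated with rational coefficients, so a priori one obtains only a $\bbq$-combination of $\mathcal{S}(Q)$. To upgrade to a $\bbz$-combination I would invoke Proposition \ref{2}, which forces the numerator of each $X_M$ to have constant term $1$, together with the denominator theorem \ref{d}: these pin down the leading monomial $1/x^{\ud}$ and thereby force the coefficient of the top-dimensional element of $\mathcal{S}(Q)$ to be an integer, after which the remaining lower-dimensional part is integral by the induction hypothesis. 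Controlling this leading coefficient uniformly across homogeneous and non-homogeneous tubes is the delicate point, and it is exactly what Propositions \ref{2}, \ref{4}, \ref{5} and \ref{6} are built to supply; linear independence (Theorem \ref{40}), which separates the distinct dimension vectors occurring in $\mathcal{S}(Q)$, guarantees that this extraction of coefficients is unambiguous.
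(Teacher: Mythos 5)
Your overall strategy matches the paper's: use Lemma \ref{induction} together with the Section~4 propositions to land in the $\bbq$-span of $\mathcal{S}(Q)$, then use Proposition \ref{2} and Theorem \ref{d} to control the leading coefficient. But there is a genuine gap at the final integrality step. After extracting the top-dimensional term you are left with
$X_MX_N-b_YX_Y=\sum_{\underline{\mathrm{dim}}Y'\prec\underline{\mathrm{dim}}(M\oplus N)}b_{Y'}X_{Y'}$
with $b_{Y'}\in\bbq$, and you assert this ``remaining lower-dimensional part is integral by the induction hypothesis.'' Your induction hypothesis, however, is a statement about \emph{products} $X_{M_1}\cdots X_{M_k}$ of smaller total dimension; the residual expression above is not such a product, and the rational coefficients $b_{Y'}$ come from dividing by $\mathrm{dim}_{\bbc}\mathrm{Ext}^1$ in Theorem \ref{XX} and from Propositions \ref{4}--\ref{6}, so knowing that each individual $X_{Y'}$ (or each lower-dimensional product) is a $\bbz$-combination of $\mathcal{S}(Q)$ says nothing about the integrality of the $b_{Y'}$ themselves. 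The paper closes this gap differently: it runs a \emph{downward} induction along the partial order $\prec$ on the dimension vectors actually occurring. Among the remaining $Y'$ pick the maximal ones $Y'_1,\dots,Y'_s$; by Proposition \ref{2} and Theorem \ref{d} the monomial $x^{-\underline{\mathrm{dim}}Y'_j}$ occurs in $X_{Y'_j}$ with coefficient exactly $1$ and occurs in no $X_{Y''}$ with $\underline{\mathrm{dim}}Y''$ not succeeding $\underline{\mathrm{dim}}Y'_j$; since the full Laurent expansion of $X_MX_N$ and of $X_Y$ have integer coefficients and $b_Y=1$, comparing the coefficient of that monomial forces $b_{Y'_j}\in\bbz$, and one repeats down the order. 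Without some version of this distinguished-monomial argument your proof only yields a $\bbq$-combination.

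Two smaller points. First, the paper's Proposition \ref{2} gives the leading coefficient $b_Y=1$ exactly, not merely an integer; an integer would suffice for the downward induction, but you should say which you are using and why. Second, your secondary induction measure is suspect: the middle terms $L$ of non-split extensions appearing in Theorem \ref{XX} have the same dimension vector but are generally \emph{less} split than $M_i\oplus M_j$, not more, so it is not clear that $\sum_{i<j}\mathrm{dim}_{\bbc}\mathrm{Ext}^1_{\mc(Q)}(M_i,M_j)$ decreases. The paper sidesteps this by expanding $X_{S_1}^{v_1}\cdots X_{S_n}^{v_n}$ and invoking Propositions \ref{def1}, \ref{4}, \ref{5}, \ref{6} to compare any two regular objects of the same dimension vector directly, rather than iterating the multiplication formula on middle terms.
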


\begin{proof}
Let $M,N $ be in $\mathcal{C}(Q).$ By  Lemma \ref{induction}, we
know that $X_{M}X_{N}$ must be a $\mathbb{Q}$-linear combination
of elements in the set
$$\{X_{L},X_{T\oplus
R}|\mathrm{Ext}^{1}_{\mc(Q)}(T,R)=0,\mathrm{Ext}^{1}_{\mc(Q)}(L,L)=0\}$$
where $L$ is any non-regular exceptional object, $R$ is $0$ or any
regular exceptional module and $T$ is $0$ or any indecomposable
regular module with self-extension.

From those propositions in Section 4, we can easily find that
$X_{M}X_{N}$ is a $\mathbb{Q}$-linear combination of elements in the
set $\mathcal{S}(Q)$. Thus we have
$$X_{M}X_{N}=b_YX_{Y}+\sum_{\mathrm{\underline{dim}}Y'\prec
\mathrm{\underline{dim}}(M\oplus N)}b_{Y'}X_{Y'}$$ where
$\mathrm{\underline{dim}}Y=\mathrm{\underline{dim}}(M\oplus N)$,
$X_{Y},X_{Y'}\in \mathcal{S}(Q)$ and $b_Y,b_{Y'}\in \mathbb{Q}$.
Therefore by Proposition \ref{2} and Theorem \ref{d}, we have
$b_Y=1$. Note that there exists a partial order on these dimension
vectors by Definition \ref{p}. Thus in these remained $Y'$, we
choose these maximal elements denoted by $Y'_{1},\cdots,Y'_{s}$.
Then by $b_Y=1$ and the coefficients of Laurent expansions in
generalized cluster variables are integers, we obtain that
$b_{Y'_{1}},\cdots,b_{Y'_{s}}$ are integers. Using the same
method, we have $b_{Y'}\in \mathbb{Z}$.
\end{proof}

\begin{Thm}\label{7}
The set $\mathcal{S}(Q)$ is a  $\mathbb{Z}$-basis of
$\mathcal{AH}(Q).$
\end{Thm}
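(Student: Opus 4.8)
The plan is to prove that $\mathcal{S}(Q)$ is both a spanning set and a linearly independent set over $\mathbb{Z}$, which together establish it as a $\mathbb{Z}$-basis of $\mathcal{AH}(Q)$. The spanning property follows almost immediately from the work already done: by Proposition \ref{11}, any product $X_M X_N$ for $M,N\in\mathcal{C}(Q)$ is a $\mathbb{Z}$-combination of elements of $\mathcal{S}(Q)$. Since $\mathcal{AH}(Q)$ is by definition generated as an algebra by the generalized cluster variables $X_M$ (together with the $X_{\tau P}$, which are themselves among the generators), an easy induction on the number of factors in a monomial shows that every element of $\mathcal{AH}(Q)$ lies in the $\mathbb{Z}$-span of $\mathcal{S}(Q)$. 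One should also check that each element of $\mathcal{S}(Q)$ actually belongs to $\mathcal{AH}(Q)$; this holds because each such element is either a generalized cluster variable or, in the case of an indecomposable $T$ with self-extension, is expressible via the multiplication formulas (Theorem \ref{XX}, together with Propositions \ref{def1}, \ref{4}, \ref{5}, \ref{6}) as an integral combination of products of cluster variables.

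The heart of the argument is linear independence. The strategy I would use is to invoke Theorem \ref{40}: since $Q$ is graded, any family of objects with pairwise distinct dimension vectors gives linearly independent generalized cluster variables over $\mathbb{Q}$ (hence over $\mathbb{Z}$). Therefore it suffices to verify that the objects indexing the elements of $\mathcal{S}(Q)$ have pairwise distinct dimension vectors. The definition of $\mathcal{S}(Q)$ already imposes $\underline{\mathrm{dim}}(T_1\oplus R_1)\neq\underline{\mathrm{dim}}(T_2\oplus R_2)$ among the regular-type elements and, for the non-regular exceptional objects $L$, the usual fact that distinct exceptional objects in a cluster category are distinguished by their dimension vectors (this is essentially the denominator Theorem \ref{d}). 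What remains is to rule out coincidences \emph{across} the two families: a regular element $X_{T\oplus R}$ must not share a dimension vector with a non-regular exceptional object $X_L$. This is exactly the content of Propositions \ref{yy} and \ref{9}, which I would cite directly: Proposition \ref{yy} handles the case where the regular part has a nonzero exceptional summand $R$ (forcing $\underline{\mathrm{dim}}\neq m\delta$), and Proposition \ref{9} shows that a purely regular module cannot have the same dimension vector as a non-regular exceptional object.

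Assembling these, I would argue: given the full index set of $\mathcal{S}(Q)$, partition it into (i) non-regular exceptional objects $L$, (ii) purely regular elements $T\oplus R$ with $R\neq 0$, and (iii) elements $T\oplus R$ with $R=0$, i.e. an indecomposable regular $T$ with self-extension. Within each part, distinctness of dimension vectors is built into the definition or into Theorem \ref{d}. Between (i) and (ii)/(iii), Proposition \ref{9} (and Proposition \ref{yy} to locate the relevant dimension vectors away from multiples of $\delta$ when $R\neq 0$) guarantees no collision. Having confirmed that all indexing objects carry distinct dimension vectors, Theorem \ref{40} yields $\mathbb{Q}$-linear independence of $\mathcal{S}(Q)$, which combined with the integral spanning property established above shows $\mathcal{S}(Q)$ is a $\mathbb{Z}$-basis.

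The step I expect to be the main obstacle is the cross-family distinctness of dimension vectors, precisely because it is not formal: it genuinely requires the representation-theoretic input of Propositions \ref{yy} and \ref{9}, whose proofs use the bilinear form $(\alpha,\beta)$ and degeneration arguments. The other delicate point is making sure the coefficients in the spanning expressions are \emph{integers} rather than merely rationals; this is where Proposition \ref{11} is essential, since its maximality-of-dimension-vector argument (using that the leading coefficient $b_Y=1$ by Proposition \ref{2} and Theorem \ref{d}, and that Laurent coefficients are integral) is what upgrades a $\mathbb{Q}$-basis to a genuine $\mathbb{Z}$-basis. I would therefore take care to present these two ingredients as the substantive core, treating the remainder as bookkeeping.
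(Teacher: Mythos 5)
Your proposal is correct and follows essentially the same route as the paper, whose proof of this theorem is simply the assembly of Propositions \ref{yy}, \ref{9} and \ref{11} (integral spanning from Proposition \ref{11}, and linear independence from distinctness of dimension vectors via Theorem \ref{40} together with Propositions \ref{yy} and \ref{9}). You merely spell out in more detail what the paper leaves implicit, so no further comparison is needed.
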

\begin{proof}
The proof follows from Proposition \ref{yy}, Proposition \ref{9}
and Proposition \ref{11}.
\end{proof}

\begin{Cor}\label{81}
$\mathcal{S}(Q)$ is a $\mathbb{Z}$-basis of the cluster algebra
$\mathcal{EH}(Q)$.
\end{Cor}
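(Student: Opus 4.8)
The plan is to leverage Theorem~\ref{7}, which already identifies $\mathcal{S}(Q)$ as a $\bbz$-basis of $\mathcal{AH}(Q)$. Since $\mathcal{EH}(Q)\subseteq\mathcal{AH}(Q)$ by definition, it suffices to establish the single inclusion $\mathcal{S}(Q)\subseteq\mathcal{EH}(Q)$. Once this is known, the $\bbz$-span of $\mathcal{S}(Q)$ lies in $\mathcal{EH}(Q)$; but that span is all of $\mathcal{AH}(Q)$ by Theorem~\ref{7}, so $\mathcal{AH}(Q)\subseteq\mathcal{EH}(Q)$ and hence $\mathcal{EH}(Q)=\mathcal{AH}(Q)$. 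A $\bbz$-basis of $\mathcal{AH}(Q)$ all of whose members lie in $\mathcal{EH}(Q)$ is then automatically a $\bbz$-basis of $\mathcal{EH}(Q)$, which is exactly the assertion.

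To verify $\mathcal{S}(Q)\subseteq\mathcal{EH}(Q)$ I would treat the two families separately. For a variable $X_L$ with $L$ exceptional, decompose $L=\bigoplus_k L_k$ into indecomposables; each summand satisfies $\ext^1_{\mc(Q)}(L_k,L_k)=0$, so $X_{L_k}$ is a generator of $\mathcal{EH}(Q)$ and $X_L=\prod_k X_{L_k}\in\mathcal{EH}(Q)$ by multiplicativity of the Caldero--Chapton map. For $X_{T\oplus R}=X_T\,X_R$, the factor $X_R$ is a product of cluster variables of indecomposable regular exceptional modules, hence already lies in $\mathcal{EH}(Q)$. The whole problem thus reduces to the key claim that $X_T\in\mathcal{EH}(Q)$ for every indecomposable regular module $T$ with self-extension (that is, $T=E[\ell]$ whose quasi-length $\ell$ is at least the rank of its tube).

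I would prove the key claim by induction on $\mathrm{\underline{dim}}T$ with respect to the order $\prec$ of Definition~\ref{p}, mimicking the bookkeeping in the proof of Proposition~\ref{11}. The engine is the product expansion used throughout Section~4: writing $\mathrm{\underline{dim}}T=\bd=(d_1,\dots,d_n)$, the product $X_{S_1}^{d_1}\cdots X_{S_n}^{d_n}$ of cluster variables of the (exceptional, since $Q$ is acyclic) simple modules $S_i$ expands, via Theorem~\ref{XX} and Lemma~\ref{induction}, as a nonzero multiple of a single self-extended regular variable at dimension $\bd$ plus a combination of terms $X_L$ with $\mathrm{\underline{dim}}L\prec\bd$. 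As the left-hand side lies in $\mathcal{EH}(Q)$ and the lower terms lie there by the inductive hypothesis, this places that one regular variable in $\mathcal{EH}(Q)$; Propositions~\ref{4},~\ref{5} and~\ref{6} then transport membership across the remaining non-homogeneous tubes and to the homogeneous variable $X_{n\delta}$ at the same dimension, again modulo strictly $\prec$-smaller terms. For the homogeneous tubes the $\bbz$-linear recursion $X_{M[n+1]}=X_M\,X_{M[n]}-X_{M[n-1]}$ of Proposition~\ref{def1} additionally builds every $X_{M[n]}$ from the base variable $X_\delta=X_{M[1]}$ without leaving $\mathcal{EH}(Q)$.

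The main obstacle is integrality: the identities above a priori only place $X_T$ in $\mathcal{EH}(Q)\otimes\bbq$, because the product expansions carry nonzero rational leading coefficients. I would resolve this precisely as in the proof of Proposition~\ref{11}. By Proposition~\ref{2} (the numerator of each $X_M$ has constant term $1$) together with the denominator Theorem~\ref{d}, the coefficient of the top-dimensional term is forced to equal $1$; the integrality of the Laurent coefficients of generalized cluster variables then propagates down the finite $\prec$-chain, keeping every coefficient in $\bbz$. The only genuinely type-dependent point is that the $\widetilde{A}_{p,p}$ instance of Proposition~\ref{5} relies on the difference property (Theorem~\ref{difference}); in every other case the argument is uniform within the alternating-orientation setting of Theorem~\ref{7}.
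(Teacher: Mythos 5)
Your proposal is correct and follows the same skeleton as the paper: both proofs reduce the statement to the single containment $\mathcal{S}(Q)\subseteq\mathcal{EH}(Q)$ and then invoke Theorem~\ref{7}, since a $\mathbb{Z}$-basis of $\mathcal{AH}(Q)$ whose members all lie in the subalgebra $\mathcal{EH}(Q)$ is automatically a $\mathbb{Z}$-basis of $\mathcal{EH}(Q)$. Where you diverge is in how the containment is established for the non-rigid regular variables. The paper disposes of this in one line by citing Dupont's result on generalized Chebyshev polynomials (\cite[Proposition 6.2]{Dupont}), which expresses $X_{E_i[n]}$ as an integer polynomial in the rigid generators $X_{E_1},\dots,X_{E_r}$ of a non-homogeneous tube, and then applies Proposition~\ref{5} to pull $X_{m\delta}$ into $\mathcal{EH}(Q)$ as well. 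You instead rebuild this containment from the paper's own Section~4 machinery: expanding $X_{S_1}^{d_1}\cdots X_{S_n}^{d_n}$ (legitimate, since for an alternating orientation every $S_i$ is projective or injective, hence rigid in $\mc(Q)$) and running a downward induction along $\prec$, with Propositions~\ref{2},~\ref{4},~\ref{5},~\ref{6} and Theorem~\ref{d} controlling the leading coefficient and transporting membership between tubes. Your route is heavier but self-contained within the paper, and your attention to the integrality of the coefficients is welcome (though strictly speaking, for the containment $\mathcal{S}(Q)\subseteq\mathcal{EH}(Q)$ one could also note that the recursion $X_{E_i[n+1]}=X_{E_i[n]}X_{E_{i+n}}-X_{E_i[n-1]}$ implicit in Theorem~\ref{16} gives $X_{E_i[n]}\in\mathbb{Z}[X_{E_1},\dots,X_{E_r}]$ directly, which is exactly the content of the Chebyshev-polynomial citation and shortcuts most of your induction for the non-homogeneous tubes). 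Your separate treatment of the homogeneous case via Proposition~\ref{5} is necessary and correctly placed, since $X_\delta$ is not itself a generator of $\mathcal{EH}(Q)$.
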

\begin{proof}
Let $E_1, \cdots, E_r$ be regular simple modules in a tube
$\mathcal{T}$ of rank $r>1.$ It is obvious that $X_{E_i}\in
\mathcal{EH}(Q)$. Then by   \cite[Proposition 6.2]{Dupont}, one can
show that $X_{E_i[n]}\in \mathcal{EH}(Q)$ for any $n\in \bbn$. Thus
by Proposition \ref{5}, we can prove that $X_{m\delta}\in
\mathcal{EH}(Q)$. By definition, $X_{L}\in \mathcal{EH}(Q)$ for $L$
satisfying $\mathrm{Ext}_{\mathcal{C}(Q)}^1(L,L)=0$. Thus
$\mathcal{S}(Q)\subset \mathcal{EH}(Q)$. It follows that
$\mathcal{S}(Q)$ is a $\mathbb{Z}$-basis of the cluster algebra
$\mathcal{EH}(Q)$ by Theorem \ref{7}.
\end{proof}
According to Theorem \ref{7} and Corollary \ref{81}, we have
\begin{Cor}\label{coincide}
Let $Q$ be an alternating tame quiver.
 Then $\mathcal{EH}(Q)=\mathcal{AH}(Q)$.
\end{Cor}

\begin{Prop}\label{12}
Let $Q$ be an alternating tame quiver with $Q_0=\{1,2,\cdots,
n\}.$ For any $\ud=(d_i)_{i\in Q_0}\in \bbz^n$, we have
$$\prod_{i=1}^{n}X^{d^+_i}_{S_i}X^{d^-_i}_{P_i[1]}=X_{M(d_1,\cdots,d_n)}+\sum_{\underline{dim}L\prec
(d_1,\cdots,d_n)}b_{L}X_{L}$$ where $X_{M(d_1,\cdots,d_n)}$ and
$X_{L}\in \mathcal{S}(Q)$,
$\mathrm{\underline{dim}}M=(d_1,\cdots,d_n)\in \mathbb{Z}^n$ and
$b_{L}\in \mathbb{Z}$.
\end{Prop}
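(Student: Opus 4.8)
The plan is to prove Proposition \ref{12} by an inductive argument on the partial order $\preceq$ from Definition \ref{p}, closely following the template already established in the proofs of Proposition \ref{4}, Proposition \ref{5}, and Proposition \ref{11}. The left-hand side $\prod_{i=1}^{n}X^{d^+_i}_{S_i}X^{d^-_i}_{P_i[1]}$ is, by definition, a product of generalized cluster variables, so by Proposition \ref{11} it expands as a $\mathbb{Z}$-linear combination of elements of $\mathcal{S}(Q)$. The content of the statement is therefore twofold: first, that there is a unique ``leading'' term $X_{M(d_1,\dots,d_n)}$ whose associated object has dimension vector exactly $(d_1,\dots,d_n)$ and whose coefficient is $1$, and second, that every remaining term $X_L$ has $\mathrm{\underline{dim}}L\prec(d_1,\dots,d_n)$ with integer coefficient.

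First I would fix $\ud=(d_i)\in\mathbb{Z}^n$ and analyze the product purely in terms of denominators. Each $X_{S_i}$ has denominator vector $\mathrm{\underline{dim}}S_i=\alpha_i$ by Theorem \ref{d}, and by the discussion preceding Proposition \ref{2} the variable $X_{P_i[1]}=X_{\tau P_i}=x_i$ has denominator $x_i^{-1}$, i.e.\ denominator vector $-\alpha_i$. Hence the product $\prod_i X^{d^+_i}_{S_i}X^{d^-_i}_{P_i[1]}$ carries the monomial $\prod_i x_i^{-d^+_i}x_i^{d^-_i}=x^{-\ud}$ in its Laurent expansion, where I use $\ud^-=\ud^+-\ud$ so that $d^+_i-d^-_i=d_i$. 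By Proposition \ref{2} the constant term of each numerator is $1$, so multiplying out produces the leading monomial $x^{-\ud}=1/x^{\ud}$ with coefficient exactly $1$; this is the ``lowest denominator'' contribution that cannot be cancelled. The object $M(d_1,\dots,d_n)$ is then specified by the assignment so that $\mathrm{\underline{dim}}M=\ud$, and Theorem \ref{d} together with Proposition \ref{2} forces its coefficient $b_Y=1$, exactly as in the final paragraph of the proof of Proposition \ref{11}.

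Next I would handle the remaining terms. Using Proposition \ref{11}, write the product as $b_Y X_Y+\sum_{\mathrm{\underline{dim}}Y'\prec\ud}b_{Y'}X_{Y'}$ with all $X_Y,X_{Y'}\in\mathcal{S}(Q)$; the argument of the previous paragraph gives $b_Y=1$ and $\mathrm{\underline{dim}}Y=\ud$. For the integrality of the lower terms, I would invoke the same descending induction on $\preceq$ used in Proposition \ref{11}: among the $Y'$ choose the maximal dimension vectors $Y'_1,\dots,Y'_s$; since $b_Y=1$ and the coefficients of Laurent expansions in generalized cluster variables are integers (Theorem \ref{d} gives the denominator and Proposition \ref{2} the normalized numerator, so the whole expansion has integer coefficients), comparing the two sides monomial-by-monomial forces $b_{Y'_1},\dots,b_{Y'_s}\in\mathbb{Z}$; iterating downward yields $b_{Y'}\in\mathbb{Z}$ for all remaining terms. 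This sets $X_{M(d_1,\dots,d_n)}=X_Y$ and completes the expansion.

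The main obstacle I anticipate is verifying that the leading object $M(d_1,\dots,d_n)$ with $\mathrm{\underline{dim}}M=\ud$ genuinely lies in $\mathcal{S}(Q)$ and is the \emph{unique} term at the top dimension vector. The decomposition $\ud=\ud^+-\ud^-$ separates a positive (module) part from a negative (shifted-projective) part, and I would need the mixed product $\prod_i X^{d^+_i}_{S_i}\cdot\prod_i X^{d^-_i}_{\tau P_i}$ to produce a well-defined object of $\mathcal{C}(Q)$ of the form (exceptional non-regular)$\oplus$(regular exceptional)$\oplus$(indecomposable regular with self-extension) with the prescribed Ext-vanishing, so that the assignment $\phi$ from the introduction applies. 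Here I would lean on Corollary \ref{coincide} ($\mathcal{EH}(Q)=\mathcal{AH}(Q)$) and on the fact that for a given dimension vector there is at most one exceptional module up to isomorphism, exactly the uniqueness principle exploited throughout Section 4; the compatibility of the positive and negative parts (no unwanted extensions across the shift) is what needs the care of Proposition \ref{9} and Proposition \ref{yy}. Once uniqueness at the top is secured, everything below is the routine descending induction already rehearsed above.
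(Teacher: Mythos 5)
Your proposal is correct and follows essentially the same route as the paper: the paper's own proof is a one-line citation of Proposition \ref{2}, Theorem \ref{d} and Theorem \ref{7}, and your argument simply unpacks those three ingredients (leading monomial $x^{-\ud}$ with coefficient $1$ from Proposition \ref{2} and Theorem \ref{d}, spanning and integrality via the descending induction of Proposition \ref{11}/Theorem \ref{7}). The uniqueness worry in your last paragraph is already absorbed by Propositions \ref{yy} and \ref{9} (distinct elements of $\mathcal{S}(Q)$ have distinct dimension vectors), so no extra work is needed there.
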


\begin{proof}
It follows from Proposition \ref{2}, Theorem \ref{d} and Theorem
\ref{7}.
\end{proof}

Note that $\{X_{M(d_1,\cdots,d_n)}:(d_1,\cdots,d_n)\in
\mathbb{Z}^n\}$ is a $\mathbb{Z}$-basis of $\mathcal{AH}(Q)$, then
we have the following Corollary \ref{13} by Proposition \ref{12}.

\begin{Cor}\label{13}
The set $\{\prod_{i=1}^{n}X^{d^+_i}_{S_i}X^{d^-_i}_{P_i[1]}\mid
(d_1,\cdots,d_n)\in \mathbb{Z}^n\}$ is a $\mathbb{Z}$-basis of
$\mathcal{AH}(Q)$.
\end{Cor}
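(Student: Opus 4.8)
The plan is to reduce Corollary \ref{13} to the already-established Theorem \ref{7} together with the key intermediate identity in Proposition \ref{12}. The setup is that $\mathcal{S}(Q)$ is a $\bbz$-basis of $\mathcal{AH}(Q)$ (Theorem \ref{7}), and by the denominator theorem (Theorem \ref{d}) and Proposition \ref{2}, the generalized cluster variables $X_{M(d_1,\cdots,d_n)}$ ranging over $(d_1,\cdots,d_n)\in\bbz^n$ pick out exactly one basis element of $\mathcal{S}(Q)$ for each dimension vector. Since each element of $\mathcal{S}(Q)$ has a distinct denominator vector $\mathrm{\underline{dim}}$, the family $\{X_{M(d_1,\cdots,d_n)}:(d_1,\cdots,d_n)\in\bbz^n\}$ is itself a $\bbz$-basis of $\mathcal{AH}(Q)$. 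This is precisely the claim recorded in the remark preceding Corollary \ref{13}, so I may take it as given.

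First I would set $Y_{\ud}:=\prod_{i=1}^{n}X^{d_i^+}_{S_i}X^{d_i^-}_{P_i[1]}$ for $\ud=(d_1,\cdots,d_n)\in\bbz^n$. The content of Proposition \ref{12} is the triangularity relation
$$
Y_{\ud}=X_{M(\ud)}+\sum_{\mathrm{\underline{dim}}L\prec\ud}b_L X_L,
$$
with every $b_L\in\bbz$ and every $X_L\in\mathcal{S}(Q)$, and with $\mathrm{\underline{dim}}M(\ud)=\ud$. The essential observation is that this expresses the family $\{Y_{\ud}\}$ in terms of the $\bbz$-basis $\{X_{M(\ud)}\}$ via a unitriangular integer matrix with respect to the partial order $\prec$ on dimension vectors from Definition \ref{p}: the diagonal coefficient is $1$, and the off-diagonal coefficients are integers supported on strictly smaller dimension vectors.

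To finish, I would argue that such a $\prec$-unitriangular change of basis is invertible over $\bbz$, so that $\{Y_{\ud}\}$ is again a $\bbz$-basis. Concretely, for $\bbz$-linear spanning one inverts the relation by downward induction on $\prec$: from $X_{M(\ud)}=Y_{\ud}-\sum_{\mathrm{\underline{dim}}L\prec\ud}b_L X_L$ and the inductive hypothesis that each $X_L$ with $\mathrm{\underline{dim}}L\prec\ud$ lies in the $\bbz$-span of the $Y$'s, one concludes $X_{M(\ud)}$ also lies in that span; hence the $Y_{\ud}$ generate $\mathcal{AH}(Q)$ over $\bbz$. Linear independence of $\{Y_{\ud}\}$ follows either from the same unitriangularity (a unitriangular matrix over $\bbz$ has determinant $1$, hence is injective) or directly from Theorem \ref{40}, since distinct $\ud$ give the leading terms $X_{M(\ud)}$ with distinct dimension vectors.

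The only mild subtlety, and the step I would treat most carefully, is the well-foundedness needed to run the downward induction on the partial order $\prec$: one must check that for a fixed $\ud$ there are only finitely many $L\in\mathcal{S}(Q)$ with $\mathrm{\underline{dim}}L\prec\ud$ appearing, and that the induction terminates. This is guaranteed because $\ud^+$ fixes an upper bound and the coordinates lie in the bounded box below $\ud$, so the set of relevant dimension vectors is finite and $\prec$ restricted to it is a well-founded partial order. Granting this, the inversion is routine and no genuine obstacle remains; the proof is essentially a formal consequence of Proposition \ref{12} and the basis property of $\{X_{M(\ud)}\}$.
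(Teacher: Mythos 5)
Your proof is correct and follows essentially the same route as the paper: the paper likewise observes that $\{X_{M(d_1,\cdots,d_n)}\}$ is a $\mathbb{Z}$-basis of $\mathcal{AH}(Q)$ and deduces the corollary directly from the unitriangular integral relation of Proposition \ref{12}. Your write-up merely makes explicit the inversion of the $\prec$-unitriangular change of basis that the paper leaves implicit.
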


Suppose $Q$ is an acyclic quiver. Define the reflected quiver
$\sigma_{i}(Q)$ by reversing all the arrows ending at $i$. The
mutations can be viewed as generalizations of reflections i.e. if
$i$ is a sink or a source in $Q_0,$ then
$\mu_{i}(Q)=\sigma_{i}(Q)$ where $\mu_{i}$ denotes the mutation in
the direction $i$. Thus there is a natural isomorphism of cluster
algebras
$$\Phi: \mathcal{EH}(Q)\longrightarrow \mathcal{EH}(Q')$$
where $Q'$ is a quiver mutation equivalent to $Q$, and $\Phi$ is
called the canonical cluster algebras isomorphism.

Let $i$ be a sink in $Q_0$, $Q'=\sigma_{i}(Q)$ and
$R^{+}_{i}:\mathcal{C}({Q})\longrightarrow \mathcal{C}({Q'})$ be
the extended BGP-reflection functor defined in \cite{Zhu}. Denote
by $X^{Q}_{?}$ (resp. by $X^{\sigma_iQ}_{?}$) the Caldero-Chapton
map associated to $Q$ (resp. to $\sigma_iQ$).

Then the following hold.
\begin{Lemma}\cite{Zhu}\label{80}
Let $Q$ be an acyclic quiver and  $i$ be a sink in $Q$. Then
$R^{+}_{i}$ induces a triangle equivalence
 $$R^{+}_{i}:\mathcal{C}({Q})\longrightarrow
\mathcal{C}({\sigma_iQ})$$

\end{Lemma}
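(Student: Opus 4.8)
The plan is to realize $R_i^+$ as the functor induced on orbit categories by an Auslander--Platzeck--Reiten tilting equivalence, and then to invoke Keller's theory of triangulated orbit categories. Recall that $\mc(Q)=\md^b(Q)/F$ with $F=[1]\circ\tau^{-1}$, and likewise $\mc(\sigma_iQ)=\md^b(\sigma_iQ)/F$. First I would produce the underlying derived equivalence. Since $i$ is a sink, the simple $S_i=P_i$ is simple projective, and the Auslander--Platzeck--Reiten module
$$T=\tau^{-1}S_i\oplus\bigoplus_{j\neq i}P_j$$
is a tilting $\bbc Q$-module whose endomorphism algebra is isomorphic to $\bbc\sigma_iQ$; this is the classical description of the reflection at a sink as an APR-tilt. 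Hence
$$G:=\mathrm{RHom}_{\bbc Q}(T,-)\colon \md^b(Q)\longrightarrow \md^b(\sigma_iQ)$$
is a triangle equivalence.

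Next I would verify that $G$ commutes with the defining autoequivalence. As a triangle functor it commutes with the shift $[1]$ up to natural isomorphism. Since $\bbc Q$ and $\bbc\sigma_iQ$ are finite-dimensional hereditary algebras, their bounded derived categories admit Serre functors, and any triangle equivalence intertwines Serre functors; because the derived AR-translation is $\tau=\nu[-1]$ with $\nu$ the Serre functor, it follows that $G\circ\tau\cong\tau\circ G$. Combining the two, $G\circ F\cong F\circ G$. By Keller's theorem the orbit categories are triangulated, and a triangle equivalence commuting with $F$ descends to a triangle equivalence $\ol{G}\colon\mc(Q)\to\mc(\sigma_iQ)$ between them.

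The remaining point, which I expect to be the real obstacle, is to identify $\ol{G}$ with the extended BGP-reflection functor $R_i^+$ of \cite{Zhu}. On the subcategory of modules having no $S_i$ as a direct summand, $G$ restricts to the classical reflection functor $S_i^+$, so $\ol{G}$ agrees with $R_i^+$ there; the delicate part is to track the few exceptional indecomposables, namely to check that $\ol{G}$ sends $S_i=P_i$ to the object prescribed in Zhu's definition and matches $R_i^+$ on the extra indecomposables $P_j[1]$ of the cluster category. This is a direct computation of $G$ on these objects together with the orbit identification coming from $F=[1]\circ\tau^{-1}$, and it is precisely the bookkeeping that the combinatorial definition of $R_i^+$ is designed to encode.
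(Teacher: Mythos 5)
The paper does not prove this lemma at all: it is imported verbatim from \cite{Zhu}, so there is no in-paper argument to compare against. Your proposal --- realizing the reflection as the APR-tilting derived equivalence at the sink $i$, checking that it commutes with $F=[1]\circ\tau^{-1}$ via uniqueness of Serre functors, descending to the orbit categories by Keller's theorem on triangulated orbit categories, and then matching the descended functor with the combinatorial definition of $R^{+}_{i}$ on the finitely many exceptional indecomposables --- is a correct reconstruction of exactly the argument in Zhu's paper, so it is essentially the same approach as the cited source.
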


 We note that Lemma \ref{80} plays an essential importance to obtain the following lemmas.
\begin{Lemma}\cite{Dupont1}\label{50}
Let $Q$ be a tame quiver and  $i$ be a sink in $Q$. Denote by
$\Phi_i:\mathcal{A}(Q)\longrightarrow \mathcal{A}(\sigma_iQ)$ the
canonical cluster algebra isomorphism and by
$R^{+}_{i}:\mathcal{C}({Q})\longrightarrow \mathcal{C}({\sigma_iQ})$
the extended BGP-reflection functor. Then
$$\Phi_i(X^{Q}_{M})=X^{\sigma_iQ}_{R^{+}_{i}M}$$
where $M$ is any rigid object in $\mathcal{C}({Q})$ or any regular
module in non-homogeneous tubes.
\end{Lemma}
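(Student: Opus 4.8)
The statement to prove is Lemma 5.11 (labeled \texttt{50}), which asserts that the canonical cluster algebra isomorphism $\Phi_i$ commutes with the Caldero-Chapton map via the extended BGP-reflection functor $R_i^+$, i.e. $\Phi_i(X^Q_M) = X^{\sigma_iQ}_{R_i^+M}$ for $M$ a rigid object or a regular module in a non-homogeneous tube.

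\medskip

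The plan is to reduce everything to the rigid (exceptional) case and then handle regular modules in non-homogeneous tubes by a limiting/inductive argument using the multiplication formulas. First I would observe that by Lemma \ref{80}, $R_i^+ \colon \mathcal{C}(Q) \to \mathcal{C}(\sigma_iQ)$ is a triangle equivalence, so it sends rigid objects to rigid objects and preserves the relevant homological data (dimensions of $\mathrm{Ext}^1$ spaces, non-split triangles, Auslander-Reiten triangles). The key structural fact is that $\Phi_i$ is determined on cluster variables by the correspondence from \cite{CK2006}: rigid indecomposables of $\mathcal{C}(Q)$ correspond bijectively to cluster variables of $\mathcal{A}(Q)$, and $\Phi_i$ is the algebra isomorphism induced by the sink/source mutation $\mu_i = \sigma_i$. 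Thus for $M$ rigid indecomposable, both $\Phi_i(X^Q_M)$ and $X^{\sigma_iQ}_{R_i^+M}$ are cluster variables of $\mathcal{A}(\sigma_iQ)$, and I would check they are the same one by tracking the mutation combinatorially: the initial seed variables $x_1,\dots,x_n$ of $\mathcal{A}(Q)$ go to the initial seed of $\mathcal{A}(\sigma_iQ)$ under $\Phi_i$ (with $x_i$ replaced by the mutated variable), and correspondingly $R_i^+$ sends the initial rigid objects $S_j, P_j[1]$ to the initial rigid objects of $\mathcal{C}(\sigma_iQ)$. For products of rigid objects (general rigid $M$) the claim follows because both $\Phi_i$ and $X^{\sigma_iQ}_{R_i^+(-)}$ are multiplicative on direct sums of compatible rigid summands (rule (3) of the Caldero-Chapton map together with $\mathrm{Ext}^1_{\mathcal{C}}(R_i^+M, R_i^+N) \cong \mathrm{Ext}^1_{\mathcal{C}}(M,N)$).

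\medskip

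For the regular modules $E[n]$ in non-homogeneous tubes, these are \emph{not} rigid once $n \ge r$ (the rank), so I cannot invoke the cluster-variable correspondence directly. Instead I would use induction on the quasi-length $n$, leveraging the cluster multiplication formulas of Section 2. The base cases are the regular simples $E_j$ and short indecomposables, which are rigid and hence already covered. For the inductive step, I would express $X^Q_{E[n]}$ through a multiplication identity such as those appearing in Proposition \ref{def1} and Theorem \ref{XX}/\ref{CK}: a product $X^Q_{A}X^Q_{B}$ of generalized cluster variables of strictly smaller quasi-length equals $X^Q_{E[n]}$ plus lower-order terms $X^Q_L$ with $\underline{\mathrm{dim}}L \prec \underline{\mathrm{dim}}(A \oplus B)$. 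Since $R_i^+$ is a triangle equivalence it transports every such non-split triangle and every $\mathrm{Ext}^1$-dimension faithfully to $\mathcal{C}(\sigma_iQ)$, so applying $\Phi_i$ (which is a ring homomorphism) to both sides and using the inductive hypothesis on the smaller-quasi-length factors and on the lower-order terms forces $\Phi_i(X^Q_{E[n]}) = X^{\sigma_iQ}_{R_i^+(E[n])}$.

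\medskip

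The main obstacle I anticipate is the bookkeeping in the inductive step for tubes: one must make sure that \emph{all} terms produced by the multiplication formula on the $Q$-side correspond, under $R_i^+$, exactly to the terms produced by the corresponding multiplication formula on the $\sigma_iQ$-side, with matching Euler-characteristic coefficients $\chi(\mathbb{P}\mathrm{Ext}^1)$ and matching middle terms. This amounts to verifying that $R_i^+$ not only preserves $\mathrm{Ext}^1$-dimensions but also matches the stratified pieces $\mathrm{Ext}^1(M,N)_{\langle L\rangle}$ and the Grassmannian characteristics up to the equivalence, which is where Lemma \ref{80} does the heavy lifting; the delicate point is that the reflection functor behaves well on regular modules (it permutes tubes and preserves quasi-lengths away from the exceptional simples touched by the reflection), and a careful case analysis at the sink vertex $i$ is needed to confirm the regular structure is respected. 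Once that compatibility of the multiplication formulas under $R_i^+$ and $\Phi_i$ is established, the result follows by the downward induction on $\prec$ exactly as in the proofs of Propositions \ref{4} and \ref{5}.
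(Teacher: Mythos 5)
The paper does not prove this lemma at all: it is quoted verbatim from \cite{Dupont1}, and the only related argument the paper itself supplies is the extension to regular simples of dimension vector $\delta$ in Lemma \ref{100}. Your proposal is essentially the standard (Dupont/Zhu) proof: the rigid case via the compatibility of $R^+_i$ with mutation at a sink and the Caldero--Keller bijection between rigid indecomposables and cluster variables, and the non-homogeneous tube case by induction on quasi-length using the $\dim\mathrm{Ext}^1_{\mc(Q)}=1$ exchange relations of Theorem \ref{CK}, whose two non-split triangles are transported by the triangle equivalence of Lemma \ref{80}. The outline is sound; the only detail worth pinning down is the explicit matching of the initial seeds (the objects $P_j[1]$, not $S_j$, correspond to the initial cluster, and one must record where $R^+_i$ sends $P_i[1]$), but this is bookkeeping rather than a gap.
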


\begin{Lemma}\label{100}
Let $Q$ be a tame quiver and $E$ be  any regular simple module  of
dimension vector $\delta.$ Then we have
$\Phi_e(X^{\sigma_eQ}_{E})=X^{Q}_{R^+_e(E)}$ where $e$ is a sink or
source in $\sigma_eQ.$
\end{Lemma}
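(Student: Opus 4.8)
The plan is to fill exactly the gap left open by Lemma \ref{50}: the object $E$ is a regular simple of dimension $\delta$ in a homogeneous tube, so it is neither rigid nor a regular module in a non-homogeneous tube, and Lemma \ref{50} does not apply to it directly. The strategy is to rewrite $X^{\sigma_eQ}_E$ as a $\bbq$-combination of generalized cluster variables of objects that \emph{are} covered by Lemma \ref{50}, push this combination through the ring isomorphism $\Phi_e$ term by term, and then reassemble the result as $X^Q_{R^+_e(E)}$. First I would record the structural input from Lemma \ref{80}: since $R^+_e$ is a triangle equivalence it carries regular modules to regular modules, non-homogeneous tubes to non-homogeneous tubes of the same rank, and preserves the relevant dimension vectors. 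In particular $R^+_e(E)$ is again a regular simple module of dimension vector $\delta$ in a homogeneous tube of $\mc(Q)$, so by Proposition \ref{independent} its variable $X^Q_{R^+_e(E)}$ equals the common value $X^Q_\delta$ attached to all homogeneous regular simples of dimension $\delta$. Hence it suffices to prove $\Phi_e(X^{\sigma_eQ}_E)=X^Q_\delta$.

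Next I would invoke the identity of Proposition \ref{5} with $n=1$ (valid for the alternating orientation, and whose underlying relation originates from the orientation-independent cluster multiplication of Theorem \ref{XX}) to write
$$X^{\sigma_eQ}_E=X^{\sigma_eQ}_F+\sum_{\underline{\mathrm{dim}}L\prec\delta}a_LX^{\sigma_eQ}_L,\qquad a_L\in\bbq,$$
where $F$ is the regular module of dimension $\delta$ of the form $E^{(i)}_1[r_i]$ in a non-homogeneous tube. The point is that every term on the right is of a type handled by Lemma \ref{50}. Indeed $F$ is a regular module in a non-homogeneous tube, and each indecomposable summand of an $L$ with $\underline{\mathrm{dim}}L\prec\delta$ is rigid: an indecomposable regular module in a homogeneous tube has dimension $m\delta\succeq\delta$, while an indecomposable $G[\ell]$ in a non-homogeneous tube of rank $r$ has self-extension only for $\ell\geq r$, where $\underline{\mathrm{dim}}(G[\ell])\succeq\delta$; thus $\underline{\mathrm{dim}}L\prec\delta$ leaves only rigid (exceptional) summands, together with shifts of projectives and injectives, all covered by the first case of Lemma \ref{50}. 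Since $X_{M\oplus N}=X_MX_N$ and $R^+_e$ is additive, applying the ring homomorphism $\Phi_e$ and Lemma \ref{50} to each indecomposable factor gives
$$\Phi_e(X^{\sigma_eQ}_E)=X^Q_{R^+_e(F)}+\sum_{\underline{\mathrm{dim}}L\prec\delta}a_LX^Q_{R^+_e(L)}.$$

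Finally I would identify this right-hand side with $X^Q_\delta$. Applying Proposition \ref{5} on $Q$ itself to the homogeneous regular simple $R^+_e(E)$ yields an analogous expansion of $X^Q_\delta=X^Q_{R^+_e(E)}$ in terms of a $\delta$-dimensional regular module $F'$ in a non-homogeneous tube of $Q$ and strictly smaller rigid terms. Because $R^+_e$ matches the non-homogeneous tubes and regular simples of $\sigma_eQ$ with those of $Q$ and preserves dimension vectors, it carries $F$ to $F'$ and the lower terms of the $\sigma_eQ$-expansion to those of the $Q$-expansion; as $\Phi_e$ intertwines the two Caldero--Chapton maps on exactly these covered objects, the two expansions coincide. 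Combined with Proposition \ref{2} and the denominator theorem (Theorem \ref{d}), which make each such decomposition unique, this gives $\Phi_e(X^{\sigma_eQ}_E)=X^Q_\delta=X^Q_{R^+_e(E)}$. When $e$ is a source rather than a sink one runs the same argument with the dual reflection functor.

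I expect the main obstacle to be this last step: confirming that $R^+_e$ transports the entire Proposition \ref{5}-expansion coherently, i.e. that the non-homogeneous regular module $F$ and the lower-order rigid objects occurring on the $\sigma_eQ$ side are sent by $R^+_e$ to precisely those occurring on the $Q$ side, with the same rational coefficients. This reduces to checking that the triangle equivalence $R^+_e$ preserves all the combinatorial and numerical data (tube ranks, regular simples, dimension vectors, and the Euler characteristics of the relevant quiver Grassmannians) feeding into Proposition \ref{5}, and it is here that Lemma \ref{80}, Lemma \ref{50}, and Proposition \ref{independent} must be combined carefully. The use of Proposition \ref{independent} to collapse the homogeneous-tube ambiguity is what makes the identification robust, since it renders the conclusion insensitive to which homogeneous tube $R^+_e(E)$ happens to land in.
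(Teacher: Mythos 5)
Your overall strategy---expand $X^{\sigma_eQ}_E$ into generalized cluster variables of objects that Lemma \ref{50} does cover, push the expansion through $\Phi_e$, and reassemble---is the right general idea, but the proof has two genuine gaps. First, you invoke Proposition \ref{5} for $\sigma_eQ$. That proposition (like Propositions \ref{2} and \ref{4} on which it rests) is only established in the paper for the \emph{alternating} orientation, whereas Lemma \ref{100} must hold for an arbitrary tame quiver: it is applied along a whole chain of reflections $\sigma_{i_s}\cdots\sigma_{i_1}(Q)$ in Theorem \ref{60}, and the intermediate quivers are not alternating. Extending Proposition \ref{5} to arbitrary orientations is essentially what Lemmas \ref{50} and \ref{100} are needed for in the first place, so this step is circular within the paper's logical structure (and for $\widetilde{A}_{p,p}$ Proposition \ref{5} itself already leans on Theorem \ref{difference}).

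Second, and more seriously, the step you yourself flag as ``the main obstacle'' is exactly the content of the lemma and is not resolved. After applying $\Phi_e$ you obtain $X^Q_{R^+_e(F)}+\sum a_L X^Q_{R^+_e(L)}$, and you need this to equal $X^Q_{R^+_e(E)}$; applying Proposition \ref{5} on $Q$ gives $X^Q_{R^+_e(E)}=X^Q_{F'}+\sum b_{L'}X^Q_{L'}$ with coefficients produced by a \emph{different} computation on a different quiver, and nothing shows that $\{(R^+_eL,a_L)\}$ coincides with $\{(L',b_{L'})\}$. ``$R^+_e$ preserves tubes and dimension vectors'' does not control these rational coefficients. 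The paper avoids both problems by choosing a single multiplication identity that it can evaluate independently on both sides of the reflection: with $\delta_e=1$ and $\dim_{\bbc}\mathrm{Ext}^1(I,P_e)=2$, Theorem \ref{XX}(1) gives $2X^{\sigma_eQ}_{P_e}X^{\sigma_eQ}_{I}=-X^{\sigma_eQ}_{E}+\sum_{i=1}^3X^{\sigma_eQ}_{E^{(i)}_1[n_i]}+(\text{preinjective terms})$, every term except $X_E$ being covered by Lemma \ref{50}; then Theorem \ref{XX}(2) applied to $X^Q_{P_e[1]}X^Q_{R^+_e(I)}$ reproduces the \emph{same} right-hand side with $X^Q_{R^+_e(E)}$ in place of $\Phi_e(X^{\sigma_eQ}_E)$, because the Euler characteristics appearing are computed from the same $\mathrm{Hom}(P_e,\tau I)$ data. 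Comparing isolates the one uncovered term. (This also explains why the paper treats $\widetilde{A}_{p,q}$ separately via Dupont: there $t=2$ and the coefficient of $X_E$ in that identity vanishes.) To repair your argument you would need either to prove Proposition \ref{5} for arbitrary orientations without using Lemma \ref{100}, or to replace it by an identity, as the paper does, whose two sides can be computed independently on $\sigma_eQ$ and on $Q$ with manifestly matching terms.
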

\begin{proof}
We only consider the case $e$ is a sink  in $\sigma_eQ.$ If $Q$ is
of type $\widetilde{A}_{p,q}$, then the lemma follows
\cite[Proposition 4.6]{Dupont1}. Now we assume that $Q$ is of type
$\widetilde{D}_n (n\geq 4)$ or $\widetilde{E}_m (m=6,7,8)$.
 Thus there are three non-homogeneous tubes
for $\sigma_eQ$ denoted by $\mathcal{T}_1, \mathcal{T}_2,
\mathcal{T}_3$. Let $E^{(i)}_1[n_i]$ be the unique indecomposable
regular module in $\mathcal{T}_i$ such that
$\mathrm{\underline{dim}}E^{(i)}_1[n_i]=\delta$ and
$\mathrm{reg}.\mathrm{top} (E^{(i)}_1[n_i])_{e}\neq 0$ for $i=1, 2,
3.$ Let $P_e$ and $I$ be $\sigma_eQ$-modules such that
$\mathrm{\underline{dim}}I=\delta-e$.  Using Theorem \ref{XX} (1),
we know that the product $2X_{P_e}^{\sigma_eQ}X^{\sigma_eQ}_{I}$ is
equal to
$$
-X^{\sigma_eQ}_{E}+\sum_{i=1}^3X^{\sigma_eQ}_{E^{(i)}_1[n_i]}
+\sum_{I', U\in S(\ud(I'))}\chi(\mathbb{P}\mathrm{Hom}(P_e, \tau
I)_{\str{U}\oplus I'[-1]})X^{\sigma_eQ}_{U\oplus I'[-1]}.
$$
Since $U\oplus I'[-1]$ is a preinjective module, applying the
isomorphism $\Phi_e$ to two side, by Lemma \ref{50}, we know that
the product $2X_{P_e[1]}^{Q}X^{Q}_{R_e^+(I)}$ is equal to
$$
-\Phi_e(X^{\sigma_eQ}_{E})+\sum_{i=1}^3X^{Q}_{R_e^+(E^{(i)}_1[n_i])}
+\sum_{I', U\in S(\ud(I'))}\chi(\mathbb{P}\mathrm{Hom}(P_e, \tau
I)_{\str{U}\oplus I'[-1]})X^{Q}_{R_e^+(U\oplus I'[-1])}.
$$
We note that $P^{Q}_e$ is the projective $\bbc Q$-module at $e$ and
$\mathrm{\underline{dim}}R^+_e(I)=\delta+e.$ Let $I^{Q}_e$ be the
simple injective module at $e$. Then any nonzero morphism from
$R^+_e(I)$ to $I^{Q}_e$ is epic and any nonzero morphism from
$P^{Q}_e$ to $R^+_eI$ is monic. Applying Theorem \ref{XX} (2) to the
product $X_{P^{Q}_e[1]}^{Q}X^{Q}_{R_e^+(I)},$ we obtain that the
product $2X_{P^{Q}_e[1]}^{Q}X^{Q}_{R_e^+(I)}$ is equal to
$$
-X^{Q}_{R^+_e(E)}+\sum_{i=1}^3X^{Q}_{R_e^+(E^{(i)}_1[n_i])}+\sum_{I',
U\in S(\ud(I'))}\chi(\mathbb{P}\mathrm{Hom}(P_e, \tau
I)_{\str{U}\oplus I'[-1]})X^{Q}_{R_e^+(U\oplus I'[-1])}.
$$
Hence, we have $\Phi_e(X^{\sigma_eQ}_{E})=X^{Q}_{R^+_e(E)}.$
\end{proof}

\begin{Thm}\label{60}
Let $Q$ be an alternating tame quiver  and $Q'=\sigma_{i_s}\cdots
\sigma_{i_1}(Q)$. Then a $\mathbb{Z}$-basis for the cluster
algebra of $Q'$ is the following set (denoted by
$\mathcal{S}(Q')$):
$$\{X_{L'},X_{T'\oplus R'}|\mathrm{\underline{dim}}(T'_1\oplus
R'_1)\neq\mathrm{\underline{dim}}(T'_2\oplus R'_2),
\mathrm{Ext}^{1}_{\mc(Q')}(T',R')=0,
\mathrm{Ext}^{1}_{\mc(Q')}(L',L')=0\}$$ where $L'$ is any
non-regular exceptional object, $R'$ is $0$ or any regular
exceptional module, $T'$ is $0$ or any indecomposable regular
module with self-extension.
\end{Thm}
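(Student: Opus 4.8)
The plan is to transfer the $\mathbb{Z}$-basis $\mathcal{S}(Q)$ for an alternating tame quiver $Q$ (established in Theorem \ref{7} and Corollary \ref{81}) to the mutation-equivalent quiver $Q'$ via the canonical cluster algebra isomorphism $\Phi = \Phi_{i_s}\circ\cdots\circ\Phi_{i_1}$. Since every $\sigma_{i_j}$ reflects at a sink (and each intermediate reflection keeps a vertex a sink or source, as mutation at a sink/source coincides with reflection), we may factor $\Phi$ as a composite of the single-step isomorphisms from Lemma \ref{50}. The key point is that $\Phi$ is an algebra isomorphism $\mathcal{EH}(Q)\to\mathcal{EH}(Q')$, so it automatically carries any $\mathbb{Z}$-basis to a $\mathbb{Z}$-basis. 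Hence it suffices to show that $\Phi$ maps $\mathcal{S}(Q)$ \emph{bijectively} onto $\mathcal{S}(Q')$.

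To do this I would track each type of generator under $\Phi$. For a rigid (exceptional) object $L$ with $\ext^1_{\mc(Q)}(L,L)=0$, Lemma \ref{50} gives $\Phi(X^Q_L)=X^{Q'}_{R^+L}$ where $R^+=R^+_{i_s}\circ\cdots\circ R^+_{i_1}$ is the composite extended BGP-reflection functor. Since $R^+$ is a triangle equivalence $\mc(Q)\to\mc(Q')$ by Lemma \ref{80}, it preserves the vanishing of self-extensions and sends non-regular exceptional objects to non-regular exceptional objects and regular exceptional modules to regular exceptional modules (regularity of the component, being intrinsic to the AR-quiver structure that the equivalence respects, is preserved). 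For the generators $X_{T\oplus R}=X_{\phi(T\oplus R)}$ attached to indecomposable regular modules $T$ with self-extension, I would invoke Lemma \ref{50} again for the regular summands in non-homogeneous tubes, together with Lemma \ref{100}, which is precisely the statement needed to handle the regular simple module $E$ of dimension vector $\delta$ generating the homogeneous tubes: it ensures $\Phi_e(X^{\sigma_eQ}_E)=X^Q_{R^+_e(E)}$ even though $E$ is neither rigid nor in a non-homogeneous tube. Combining these, $\Phi$ sends each $X_{T\oplus R}$ to $X_{T'\oplus R'}$ with $T'\oplus R'=R^+(T\oplus R)$, and the Ext-vanishing condition $\ext^1_{\mc(Q)}(T,R)=0$ transfers to $\ext^1_{\mc(Q')}(T',R')=0$ by the triangle equivalence.

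The remaining bookkeeping is to verify that the distinctness-of-dimension-vectors conditions match up, i.e. that $\ud(T_1\oplus R_1)\neq\ud(T_2\oplus R_2)$ on the $Q$ side corresponds to $\ud(T'_1\oplus R'_1)\neq\ud(T'_2\oplus R'_2)$ on the $Q'$ side. This follows because the reflection functor induces a bijection on the relevant objects and the defining conditions of $\mathcal{S}(Q')$ are stated intrinsically in $\mc(Q')$, so $R^+$ maps the indexing set of $\mathcal{S}(Q)$ bijectively onto that of $\mathcal{S}(Q')$. Thus $\Phi(\mathcal{S}(Q))=\mathcal{S}(Q')$ as sets, and since $\Phi$ is a ring isomorphism carrying $\mathbb{Z}$-bases to $\mathbb{Z}$-bases, $\mathcal{S}(Q')$ is a $\mathbb{Z}$-basis of $\mathcal{EH}(Q')=\mathcal{A}(Q')$.

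I expect the main obstacle to be the last point about the homogeneous-tube generators: whereas rigid objects and regular modules in non-homogeneous tubes are covered cleanly by Lemma \ref{50}, the element $X_{n\delta}$ (defined via a module $E[n]$ in a homogeneous tube, which is \emph{not} rigid) is not directly in the scope of that lemma. This is exactly why Lemma \ref{100} was proved, so the delicate step is to make sure that the assignment $\phi$ chosen on $\textbf{D}(Q')$ is compatible with the images under $R^+$ of the assignment on $\textbf{D}(Q)$ — or, more cleanly, to observe that since the defining relations (Propositions \ref{4}, \ref{5}, \ref{6}) express $X_{n\delta}$ in terms of regular-tube generators up to strictly smaller dimension vectors, the isomorphism $\Phi$ respects these relations and hence the basis property passes through regardless of the particular assignment. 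Everything else is a routine transport-of-structure argument along the equivalence $R^+$.
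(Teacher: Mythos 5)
Your proposal is correct and follows essentially the same route as the paper: transfer $\mathcal{S}(Q)$ along the canonical cluster algebra isomorphism $\Phi$ (which automatically carries $\mathbb{Z}$-bases to $\mathbb{Z}$-bases) and then identify $\Phi(\mathcal{S}(Q))$ with $\mathcal{S}(Q')$ using Lemma \ref{50} for rigid objects and regular modules in non-homogeneous tubes together with Lemma \ref{100} for the non-rigid dimension-$\delta$ generators coming from homogeneous tubes. The paper's own proof is terser but identical in substance, the only extra ingredient being a separate remark for type $\widetilde{A}_{1,1}$, where the semicanonical basis of \cite{CZ} is quoted directly.
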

\begin{proof}
If $Q'$ is a quiver of type $\widetilde{A}_{1,1}$, it is obvious
that
$$\{X_M,X_{n\delta}\mid M\in
\mathcal{C}(Q),\mathrm{Ext}^1(M,M)=0\}$$ is a $\mathbb{Z}$-basis
for cluster algebra of $\widetilde{A}_{1,1}$, which is called the
semicanonical basis in \cite{CZ}. If $Q'$ is not a quiver of type
$\widetilde{A}_{1,1}$ and $Q$ is an alternating tame quiver, then
in Theorem \ref{7}, we have already obtained a $\mathbb{Z}$-basis
for cluster algebra of $Q$,  denoted  by $\mathcal{S}(Q)$:
$$\{X_{L},X_{T\oplus R}|\mathrm{\underline{dim}}(T_1\oplus
R_1)\neq\mathrm{\underline{dim}}(T_2\oplus
R_2),\mathrm{Ext}^{1}_{\mc(Q)}(T,R)=0,\mathrm{Ext}^{1}_{\mc(Q)}(L,L)=0\}$$
where $L$ is any  non-regular exceptional object, $R$ is $0$ or any
regular exceptional module and $T$ is $0$ or any indecomposable
regular module with self-extension.  Thus $\Phi(\mathcal{S}(Q))$ is
a $\mathbb{Z}$-basis for the cluster algebra of $Q'$ because
$\Phi:\mathcal{EH}(Q)\longrightarrow \mathcal{EH}(Q')$ is the
canonical cluster algebras isomorphism. Then by Lemma \ref{50} and
Lemma \ref{100} we know that $\Phi(\mathcal{S}(Q))$ is exactly the
basis $\mathcal{S}(Q')$.
\end{proof}

\nd {\it Proof of Theorem 1.1.}  If $Q$ is reflection equivalent to
a quiver with alternating orientation, then the proof follows from
Theorem \ref{60}.  If any orientation of the underlying graph of $Q$
does not satisfy that any vertex is a sink or a source, then $Q$ is
a quiver of type $\widetilde{A}_{p,q}$ with $p\neq q$. The theorem
follows \cite[Theorem 4.8]{Dupont1}. \hspace{11cm}$\square$

For any dimension vector $\ud$, let $\ud=k\delta+\ud_0$ be the
canonical decomposition of $\ud$ (see \cite{Kac}) and $E$ be an
indecomposable regular simple module of dimension vector $\delta$.
There exists a regular rigid module $R$ of dimension vector $\ud_0$.
Then Theorem \ref{30} implies that the set
$$\mathcal{B}'(Q):=\{X_{L},X_{E[k]\oplus R}|L\in \mc(Q), \mathrm{Ext}^{1}_{\mc(Q)}(L,L)=0, \mathrm{Ext}^{1}_{\mc(Q)}(R,R)=0\}.$$
is a $\bbz$-basis for $\mathcal{EH}(Q).$

As in Corollary \ref{81}, let $E_1, \cdots, E_r$ be regular simple
modules in a tube $\mathcal{T}$ of rank $r>1.$ It is obvious that
$X_{E_i[n]}\in \mathcal{EH}(Q)$ for $1\leq i\leq r$ and  $n\in
\bbn$. Thus, we have
\begin{Cor}
Let $Q$ be a tame quiver. Then $\mathcal{EH}(Q)=\mathcal{AH}(Q)$.
\end{Cor}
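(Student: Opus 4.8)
The plan is to show the two algebras coincide for an arbitrary tame quiver $Q$ by reducing the general statement to the alternating case, which has already been settled in Corollary~\ref{coincide}. Recall that $\mathcal{EH}(Q)\subseteq\mathcal{AH}(Q)$ holds by construction, so the only content is the reverse inclusion $\mathcal{AH}(Q)\subseteq\mathcal{EH}(Q)$, i.e.\ that the generators $X_M$ (for arbitrary modules $M$) and $X_{\tau P}$ lie in $\mathcal{EH}(Q)$. For a tame quiver, any orientation is reflection-equivalent to an alternating one except in the genuinely asymmetric case $\widetilde{A}_{p,q}$ with $p\neq q$; this dichotomy is exactly the one exploited in the proof of Theorem~\ref{30}, so I would organise the argument along the same two branches.

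First, for a quiver $Q$ reflection-equivalent to an alternating tame quiver $Q_0$, I would invoke the canonical cluster algebra isomorphism $\Phi\colon\mathcal{EH}(Q_0)\longrightarrow\mathcal{EH}(Q)$ arising from a sequence of BGP-reflections. By Corollary~\ref{coincide} we already know $\mathcal{EH}(Q_0)=\mathcal{AH}(Q_0)$. Since $\Phi$ is an algebra isomorphism carrying generators to generators (Lemma~\ref{50} and Lemma~\ref{100} track the images of the relevant $X_M$ under the reflection functor $R^+_i$), it sends a $\mathbb{Z}$-basis of $\mathcal{EH}(Q_0)$ to a $\mathbb{Z}$-basis of $\mathcal{EH}(Q)$, and likewise intertwines the two constructions $\mathcal{EH}$ and $\mathcal{AH}$; hence $\mathcal{EH}(Q)=\mathcal{AH}(Q)$ follows by transport of structure. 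The key fact I would lean on is that $\Phi$ respects the generating sets on both sides simultaneously, which is precisely the content packaged into the proof of Theorem~\ref{60}.

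Second, for $Q$ of type $\widetilde{A}_{p,q}$ with $p\neq q$, the reflection reduction is unavailable, and here I would follow the hint already present in the text preceding Corollary~\ref{81}: the regular simple modules $E_1,\dots,E_r$ in a tube of rank $r>1$ satisfy $X_{E_i}\in\mathcal{EH}(Q)$ trivially (they are rigid), and then $X_{E_i[n]}\in\mathcal{EH}(Q)$ for every $n$ by \cite[Proposition 6.2]{Dupont} (or \cite[Proposition 6.2]{Dupont1}). The main obstacle, as in Corollary~\ref{81}, is to show that the generators supported on \emph{homogeneous} tubes, namely $X_{m\delta}$, also lie in $\mathcal{EH}(Q)$: these $X_{m\delta}$ are not themselves generalized cluster variables of rigid objects, so one cannot write them down directly. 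This is resolved by expressing $X_{m\delta}$ in terms of the non-homogeneous-tube quantities via the difference relation of Proposition~\ref{5}, whose $\widetilde{A}_{p,p}$ case rests on Theorem~\ref{difference}; since every element appearing on the right-hand side of that relation has already been shown to lie in $\mathcal{EH}(Q)$, so does $X_{m\delta}$.

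Once every generator of $\mathcal{AH}(Q)$ is known to lie in $\mathcal{EH}(Q)$ in both branches, the reverse inclusion $\mathcal{AH}(Q)\subseteq\mathcal{EH}(Q)$ is immediate, and combined with the obvious inclusion $\mathcal{EH}(Q)\subseteq\mathcal{AH}(Q)$ this yields $\mathcal{EH}(Q)=\mathcal{AH}(Q)$. I expect the only delicate point to be the $\widetilde{A}_{p,q}$ ($p\neq q$) branch, where the absence of an alternating model forces one to control the homogeneous-tube generators $X_{m\delta}$ through the difference property rather than through reflection functors; all the alternating cases are handled uniformly and transparently by the isomorphism $\Phi$.
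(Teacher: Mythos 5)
Your proposal is correct in substance, but it is organised differently from the paper and has one soft spot worth flagging. The paper proves this corollary uniformly, in one line: having already established Theorem \ref{30} for an arbitrary tame quiver, it observes (exactly as in Corollary \ref{81}) that $X_{E_i}\in\mathcal{EH}(Q)$ for the regular simples of a non-homogeneous tube, hence $X_{E_i[n]}\in\mathcal{EH}(Q)$ by \cite[Proposition 6.2]{Dupont}; choosing the assignment $\phi$ so that every $X_{\ud}$ with $\ud\in\textbf{D}(Q)$ is realised as $X_{E_i[n]\oplus R}=X_{E_i[n]}X_R$ in a non-homogeneous tube, the whole basis $\mathcal{B}(Q)$ lies in $\mathcal{EH}(Q)$, and since every generator $X_M$ of $\mathcal{AH}(Q)$ is a $\mathbb{Z}$-combination of basis elements, the reverse inclusion follows. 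There is no case split on the orientation at this stage; the dichotomy ``reflection-equivalent to alternating versus $\widetilde{A}_{p,q}$ with $p\neq q$'' was consumed in the proof of Theorem \ref{30} and need not be revisited. Your second branch is essentially the paper's argument (and in fact works verbatim for every tame quiver, not only for $\widetilde{A}_{p,q}$ with $p\neq q$), which is why the paper does not need your first branch at all. The soft spot is precisely in that first branch: the assertion that $\Phi$ ``intertwines the two constructions $\mathcal{EH}$ and $\mathcal{AH}$'' is not automatic, because $\mathcal{AH}(Q)$ is defined intrinsically by the generators $X^Q_M$ for \emph{all} $\bbc Q$-modules $M$, and Lemma \ref{50} and Lemma \ref{100} only identify $\Phi(X^{Q_0}_M)$ with $X^{Q}_{R^+_iM}$ for rigid objects, regular modules in non-homogeneous tubes, and regular simples of dimension vector $\delta$. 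To conclude that, say, $X^Q_{M[n]}$ for $M$ a homogeneous regular simple and $n\geq 2$ lies in $\mathcal{EH}(Q)$, you still need Proposition \ref{def1} (which expresses $X_{n\delta}$ as a polynomial in $X_{\delta}$) together with multiplicativity of the Caldero--Chapoton map on direct sums; once you add these two observations, the ``transport of structure'' step closes and your argument is complete. The trade-off is that your version makes the role of the reflection functors explicit, while the paper's version is shorter because it lets the already-proved basis theorem absorb all orientation issues.
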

Recall the set $\textbf{E}(Q)$  was defined in the introduction.
For any $\ud\in \bbz^{Q_0}$, we have
$$\prod_{i=1}^{n}X^{d^+_i}_{S_i}X^{d^-_i}_{P_i[1]}=b_MX_{M}+\sum_{\underline{dim}L\prec
(d_1,\cdots,d_n)}b_{L}X_{L}$$ where $X_{M}$ and $X_{L}\in
\mathcal{B}(Q)$, and $b_M, b_{L}\in \mathbb{Z}$. According to
Theorem \ref{XX}, we know that $b_M\neq 0$ and
$\mathrm{\mathrm{\underline{dim}}}M=(d_1,\cdots,d_n)\in
\mathbb{Z}^n$.  Thus we have the following corollary.
\begin{Cor}\label{dimension}
Let $Q$ be a tame quiver. Then $\bbz^{Q_0}=\textbf{D}(Q)\cup
\textbf{E}(Q).$
\end{Cor}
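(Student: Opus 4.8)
The plan is to establish the non-trivial inclusion $\bbz^{Q_0}\subseteq\textbf{D}(Q)\cup\textbf{E}(Q)$; the reverse inclusion is immediate, since $\textbf{D}(Q)\subseteq\bbn^{Q_0}\subseteq\bbz^{Q_0}$ and $\textbf{E}(Q)\subseteq\bbz^{Q_0}$ hold by definition. So I would fix an arbitrary $\ud=(d_1,\dots,d_n)\in\bbz^{Q_0}$ and form the element
$$P:=\prod_{i=1}^{n}X_{S_i}^{d_i^+}X_{P_i[1]}^{d_i^-},$$
which lies in $\mathcal{AH}(Q)=\mathcal{EH}(Q)$ because each $X_{S_i}$ (a module) and each $X_{P_i[1]}=X_{\tau P_i}$ is a generator of $\mathcal{AH}(Q)$. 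First I would record the denominator vector of $P$: by the denominator theorem (Theorem \ref{d}), $X_{S_i}$ has denominator vector $\underline{\mathrm{dim}}\,S_i=\alpha_i$ and $X_{P_i[1]}$ has denominator vector $\underline{\mathrm{dim}}(P_i[1])=-\alpha_i$, and since these vectors are additive under products, $P$ has denominator vector $\sum_i(d_i^+-d_i^-)\alpha_i=\sum_i d_i\alpha_i=\ud$.

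Next I would expand $P$ in the $\bbz$-basis $\mathcal{B}(Q)$ furnished by Theorem \ref{30}, writing $P=\sum_N b_N X_N$ with $b_N\in\bbz$ and each $X_N\in\mathcal{B}(Q)$. The heart of the argument is to isolate a unique $\preceq$-maximal term. Iterating the cluster multiplication formula (Theorem \ref{XX}) and tracking dimension vectors through Lemma \ref{induction}, each factorization step contributes a leading term whose denominator vector is the sum of the two factors, plus terms strictly smaller in the order $\prec$; this is the same bookkeeping used in Proposition \ref{2} and Proposition \ref{12}. Consequently $P=b_M X_M+\sum_{\underline{\mathrm{dim}}\,L\prec\ud}b_L X_L$ with $\underline{\mathrm{dim}}\,M=\ud$, and comparing the pure denominator monomial $x^{-\ud}$ — which survives only in the top term by Proposition \ref{2} and Theorem \ref{d}, since every lower term has strictly smaller denominator exponents — forces $b_M\neq0$. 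This is exactly the displayed expansion stated immediately before the corollary.

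Finally I would read off the conclusion from the definition of $\mathcal{B}(Q)=\{X_L,X_{\ud'}\mid L\in\mc(Q),\ \underline{\mathrm{dim}}\,L\ \text{rigid},\ \ud'\in\textbf{D}(Q)\}$. Since $X_M\in\mathcal{B}(Q)$ and $\underline{\mathrm{dim}}\,M=\ud$, the element $X_M$ is either of the form $X_L$ with $L$ rigid, whence $\ud=\underline{\mathrm{dim}}\,L\in\textbf{E}(Q)$, or of the form $X_{\ud'}$ with $\ud'\in\textbf{D}(Q)$, whence $\ud'=\underline{\mathrm{dim}}\,M=\ud$ and $\ud\in\textbf{D}(Q)$. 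In either case $\ud\in\textbf{D}(Q)\cup\textbf{E}(Q)$, and as $\ud$ was arbitrary the inclusion follows. The main obstacle is the middle step: guaranteeing that the basis expansion of $P$ has a genuinely unique $\preceq$-maximal term, of dimension vector exactly $\ud$ and with nonzero coefficient, for an arbitrary (not necessarily alternating) tame orientation. For alternating $Q$ this is precisely Proposition \ref{12}; for a general orientation I would transport it through the canonical isomorphism $\Phi$ induced by the BGP-reflection functors (Lemma \ref{50} and Lemma \ref{100}), under which the basis $\mathcal{B}(Q)$ and the denominator-vector accounting are preserved.
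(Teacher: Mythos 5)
Your proposal follows essentially the same route as the paper: the paper also forms the product $\prod_{i}X_{S_i}^{d_i^+}X_{P_i[1]}^{d_i^-}$, expands it in the basis $\mathcal{B}(Q)$ of Theorem \ref{30} as $b_MX_M+\sum_{\underline{\mathrm{dim}}L\prec\ud}b_LX_L$ with $b_M\neq 0$ and $\underline{\mathrm{dim}}M=\ud$, and reads off that $\ud$ lies in $\textbf{D}(Q)$ or $\textbf{E}(Q)$ according to which kind of basis element $X_M$ is. Your extra care about the leading coefficient (via Proposition \ref{2}, Theorem \ref{d}, and transport through the BGP-reflection functors for non-alternating orientations) is a more explicit version of what the paper compresses into a citation of Theorem \ref{XX} and the discussion preceding the corollary.
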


\section{Examples}
In this section, we will give some examples for special cases to
explain the $\mathbb{Z}$-bases explicitly.

\begin{subsection}{$\bbz$-basis for finite type}
For finite type, we know that there are no regular modules in
mod-$\bbc Q$. Thus the $\mathbb{Z}$-bases are exactly
$$\{X_M\mid M\in \mathcal{C}(Q),\mathrm{Ext}^1(M,M)=0\}$$ in
\cite{CK2005}.
\end{subsection}

\begin{subsection}{$\bbz$-basis for the Kronecker quiver}
Consider the Kronecker quiver $K$. Let $M$ be a regular simple $\bbc
K$-module and $M[n]$ be the regular module with regular socle $M$
and regular length $n\in \bbn$. Since $X_{M[n]}=X_{M'[n]}$ for any
regular simple $M'$. We set $X_{n\delta}:=X_{M[n]}.$ In this case,
there is a $\mathbb{Z}$-basis of $\mc(K)$ is
$$\mathcal{B}'(K)=\{X_M,X_{n\delta}\mid M\in \mathcal{C}(Q),\mathrm{Ext}^1(M,M)=0\}$$ which
is called the semicanonical basis in \cite{CZ}. If we modify:
$$z_1:=X_{\delta},\ z_n:=X_{n\delta}-X_{(n-2)\delta}.$$
Then $\{X_M,z_n\mid M\in \mathcal{C}(Q),\mathrm{Ext}^1(M,M)=0\}$
is the canonical basis for cluster algebra of Kronecker quiver in
\cite{SZ}. \end{subsection}

\begin{subsection}{$\bbz$-basis for $\widetilde{D}_4$}\label{d4}
Let $Q$ be the tame quiver of type $\widetilde{D}_4$ as follows
$$
\xymatrix{& 2 \ar[d] &\\
3 \ar[r] & 1  & 5 \ar[l]\\
& 4 \ar[u] &}
$$
We denote the minimal imaginary root by $\delta=(2,1,1,1,1)$. The
regular simple modules of dimension vector $\delta$ are
$$
\xymatrix{& \bbc \ar[d]^{\alpha_1} &\\
\bbc \ar[r]^{\alpha_2} & \bbc^2  & \bbc \ar[l]_{\alpha_3}\\
& \bbc \ar[u]^{\alpha_4} &}
$$
with linear maps
$$
\alpha_1=\left(%
\begin{array}{c}
  1 \\
  0 \\
\end{array}%
\right), \alpha_2=\left(%
\begin{array}{c}
  0 \\
  1 \\
\end{array}%
\right),\alpha_3=\left(%
\begin{array}{c}
  1 \\
  1 \\
\end{array}%
\right),\alpha_4=\left(%
\begin{array}{c}
  \lambda\\
  \mu \\
\end{array}%
\right)
$$
where $\lambda/\mu\in \mathbb{P}^1, \lambda/\mu\neq 0,1,\infty.$
Let $M$ be any regular simple $\bbc Q$-module of dimension vector
$\delta$ and  $X_M$ be the generalized cluster variable associated
to $M$ by the reformulation of the Caldero-Chapton map. Then we
have
\begin{Prop}\cite{DX1}
$X_{M}=\frac{1}{x^2_1x_2x_3x_4x_5}+\frac{4}{x_1x_2x_3x_4x_5}+
\frac{x^2_1+4x_1+6}{x_2x_3x_4x_5}+\frac{x_2x_3x_4x_5+2}{x^2_1}+\frac{4}{x_1}.$
\end{Prop}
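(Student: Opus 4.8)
The plan is to compute $X_M$ directly from the reformulated Caldero--Chapton formula (rule (2) of the Introduction),
$$X_M=\sum_{\ue}\chi(\mathrm{Gr}_{\ue}(M))\,x^{\ue R+(\underline{\mathrm{dim}}M-\ue)R^{tr}-\underline{\mathrm{dim}}M},$$
by enumerating the submodules of $M$ together with their Grassmannians. First I would record the matrix $R$. Since the only arrows of $Q$ are $2\to1,3\to1,4\to1,5\to1$ and $r_{ij}=\dim_\bbc\mathrm{Ext}^1(S_i,S_j)$ equals the number of arrows $i\to j$, one has $r_{21}=r_{31}=r_{41}=r_{51}=1$ and all other entries $0$. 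A short computation then gives, for a dimension subvector $\ue=(e_1,e_2,e_3,e_4,e_5)$ with $e_1$ the component at the central vertex,
$$\ue R+(\delta-\ue)R^{tr}-\delta=(\,e_2+e_3+e_4+e_5-2,\;1-e_1,\;1-e_1,\;1-e_1,\;1-e_1\,),$$
so the monomial attached to $\ue$ is $x_1^{\,k-2}(x_2x_3x_4x_5)^{\,1-e_1}$ with $k:=e_2+e_3+e_4+e_5$. This reduces the proposition to computing $\chi(\mathrm{Gr}_{\ue}(M))$ for each admissible $\ue$.

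Next I would enumerate the subrepresentations. Since $M_i=\bbc$ for $i=2,3,4,5$, each such component of a submodule $N$ is $0$ or $M_i$; writing $S\sset\{2,3,4,5\}$ for the indices with $N_i=M_i$, a subrepresentation is exactly a choice of subspace $N_1\sset\bbc^2=M_1$ containing the images of the maps attached to the vertices of $S$ (call their span $L_S$), with $\dim N_1=e_1$. The key geometric input is that the four maps $\alpha_1,\dots,\alpha_4$ have \emph{distinct} images in $\bbc^2$, namely the four distinct points $[1:0],[0:1],[1:1],[\lambda:\mu]$ of $\mathbb{P}^1$ (using $\lambda/\mu\neq0,1,\infty$). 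Hence $\dim L_S=0,1,2$ according as $|S|=0$, $|S|=1$, or $|S|\geq2$, and splitting into the three cases $e_1=0,1,2$ determines every Grassmannian.

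The crux is the case $e_1=1$. When $S=\varnothing$ the line $N_1$ ranges over all of $\mathbb{P}^1$, so $\mathrm{Gr}_{(1,0,0,0,0)}(M)\cong\mathbb{P}^1$ and $\chi=2$; when $|S|=1$ the line $N_1=L_S$ is forced, so $\chi=1$; and when $|S|\geq2$ one would need a line containing two distinct lines, so the Grassmannian is empty and $\chi=0$. For $e_1=0$ only $S=\varnothing$ survives ($\chi=1$), and for $e_1=2$ we must have $N_1=\bbc^2$ for every $S$, giving a single point ($\chi=1$) for each of the $2^4$ subsets. Summing $x_1^{\,k-2}(x_2x_3x_4x_5)^{\,1-e_1}$ weighted by these Euler characteristics — the unique $\mathbb{P}^1$ contributing the coefficient $2$, the four singletons with $e_1=1$ the coefficient $4$, the $\binom{4}{2}=6$ pairs the coefficient $6$, and so on — and grouping by the power of $x_1$ reproduces the five displayed summands. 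The only genuine obstacle is the $e_1=1$, $S=\varnothing$ Grassmannian, where one must recognize the $\mathbb{P}^1$ and its Euler characteristic $2$; Proposition \ref{independent} guarantees that all these characteristics are independent of $\lambda,\mu$, so the computation is unambiguous.
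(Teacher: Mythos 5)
Your computation is correct: the exponent vector $\ue R+(\delta-\ue)R^{tr}-\delta=(e_2+e_3+e_4+e_5-2,\,1-e_1,\,\dots,\,1-e_1)$ checks out against the definition of $R$ for this orientation, and the enumeration of Grassmannians (the single $\mathbb{P}^1$ with $\chi=2$ at $\ue=(1,0,0,0,0)$, the four forced lines, and the $2^4$ submodules with $N_1=\bbc^2$ contributing $\sum_j\binom{4}{j}x_1^{j-2}(x_2x_3x_4x_5)^{-1}$) reassembles exactly into the five displayed terms, with $\frac{x_2x_3x_4x_5+2}{x_1^2}$ absorbing both the $e_1=0$ term and the $\mathbb{P}^1$ contribution. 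The present paper states this proposition by citation to [DX1] without proof, and your direct evaluation of the Caldero--Chapoton formula is the standard argument behind it, so there is nothing to add.
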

We define $X_{n\delta}:=X_{M[n]}$ for $n\in \bbn.$ Now, we
consider three non-homogeneous tubes labelled by the subset
$\{0,1,\infty\}$ of $\mathbb{P}^1$. The regular simple modules in
non-homogeneous tubes are denoted by $E_1,E_2,E_3,E_4,E_5,E_6$,
where
$$\underline{\mathrm{dim}}E_1=(1,1,1,0,0),\,\,\underline{\mathrm{dim}}E_2=(1,0,0,1,1),\,\,
\underline{\mathrm{dim}}E_3=(1,1,0,1,0),$$$$
\underline{\mathrm{dim}}E_4=(1,0,1,0,1),\,\,
\underline{\mathrm{dim}}E_5=(1,0,1,1,0),\,\,\underline{\mathrm{dim}}E_6=(1,1,0,0,1).$$
We note that $\{E_1, E_2\}$, $\{E_3, E_4\}$ and $\{E_5, E_6\}$ are
pairs of the regular simple modules at the mouth of non-homogeneous
tubes labelled by $1,\infty$ and $0$, respectively. We set
$X_{n\delta_i}:= X_{E_i[2n]}$ for  $1\leq i\leq 6$.
\begin{Prop}\cite{DX1}\label{d4-delta}
For any $n\in \bbn$, we have
$X_{n\delta_1}=X_{n\delta_2}=X_{n\delta_3}=X_{n\delta_4}=X_{n\delta_5}=X_{n\delta_6}$
and $X_{n\delta_1}=X_{n\delta}+X_{(n-1)\delta}$ where $X_0=1$.
\end{Prop}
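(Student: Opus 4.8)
The plan is to prove, for each of the three rank-two tubes and each choice of quasi-socle, the single identity $X_{E[2n]}=X_{n\delta}+X_{(n-1)\delta}$, where $E$ is a regular simple in that tube; this one statement subsumes both assertions, since its right-hand side is the same for all six modules $E_1[2n],\dots,E_6[2n]$ (all of dimension vector $n\delta$) and does not depend on the tube or on the chosen quasi-socle. Throughout I write $h_n:=X_{n\delta}=X_{M[n]}$ for the homogeneous-tube variable, which by Proposition \ref{independent} is well defined and, by Proposition \ref{def1} with $n=1$, satisfies the three-term recursion $h_{n+1}=h_1h_n-h_{n-1}$ with $h_0=1$, $h_{-1}=0$ and $h_1=X_\delta$.

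First I would set up the multiplication inside a fixed non-homogeneous tube with regular simples $E_1,E_2$ (so $\tau E_1=E_2$). Using Theorem \ref{XX}(1) (or Theorem \ref{CK}) one computes, for every $\ell\ge 1$,
\[
X_{E_1[\ell]}X_{E_2}=X_{E_2[\ell+1]}+X_{E_2[\ell-1]},\qquad X_{E_2[\ell]}X_{E_1}=X_{E_1[\ell+1]}+X_{E_1[\ell-1]},
\]
the first summand being the nonsplit extension and the second coming from the $\mathrm{Hom}(E_2,\tau E_1[\ell])=\mathrm{Hom}(E_2,E_2[\ell])$-contribution (with the convention $E_j[0]=0$, $X_{E_j[0]}=1$). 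Writing $a_\ell:=X_{E_1[\ell]}$ and $b_\ell:=X_{E_2[\ell]}$ and eliminating the odd-length variables between these two families yields a closed three-term recursion for the even-length variables,
\[
a_{2n+2}=(X_{E_1}X_{E_2}-2)\,a_{2n}-a_{2n-2},
\]
together with the identical recursion for $b_{2n}$; the key point is that the coefficient $X_{E_1}X_{E_2}-2$ is \emph{independent of $n$}.

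The crux, and the step I expect to be hardest, is the base case: one must show $X_{E_1}X_{E_2}=X_\delta+2$ for each tube, equivalently $X_{\delta_i}=X_\delta+1$. This is the only place where the non-homogeneous tubes are genuinely tied to the homogeneous variable $X_\delta$, and it cannot be extracted from the $S_3$-symmetry of the symmetric $\widetilde D_4$ quiver alone, since that symmetry relates the $X_{\delta_i}$ only up to a permutation of the variables $x_2,\dots,x_5$ rather than as identical Laurent polynomials. I would prove it by a direct computation with the Caldero--Chapton map: the regular simples $E_j$ have dimension vectors with entries in $\{0,1\}$, so their quiver Grassmannians, and hence the $X_{E_j}$, are immediate, and one checks that $X_{E_1}X_{E_2}$ equals the explicitly displayed $X_\delta$ plus $2$. (Alternatively one may evaluate the lower-order preinjective terms $W$ in the Section~2.2 multiplication formula $2X_{P_e}X_I=-X_\delta+\sum_i X_{\delta_i}+W$; but since the closed form of $X_M$ is already available, the direct check is the cleanest route.)

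Finally I would close the induction. With the base case in hand, $a_{2n}$ satisfies $a_{2n+2}=X_\delta\,a_{2n}-a_{2n-2}$, which is exactly the recursion governing $h_n$. Setting $f_n:=h_n+h_{n-1}$, linearity gives $f_{n+1}=X_\delta f_n-f_{n-1}$ as well, and the initial values match: $a_0=1=f_0$ and $a_2=X_{E_1}X_{E_2}-1=X_\delta+1=f_1$. Two solutions of the same second-order linear recursion that agree at $n=0,1$ coincide, so $X_{E_1[2n]}=h_n+h_{n-1}=X_{n\delta}+X_{(n-1)\delta}$; the same argument applies verbatim to $b_{2n}$ (with $b_2=X_{E_1}X_{E_2}-1=a_2$), and, because the common base value $X_\delta+2$ is shared by all three tubes, to every $E^{(i)}_j[2n]$. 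This simultaneously yields $X_{n\delta_1}=\dots=X_{n\delta_6}$ and the identity $X_{n\delta_1}=X_{n\delta}+X_{(n-1)\delta}$, with the base case being the only non-formal ingredient.
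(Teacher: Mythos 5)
Your proposal is correct: the two multiplication identities $X_{E_1[\ell]}X_{E_2}=X_{E_2[\ell+1]}+X_{E_2[\ell-1]}$ and $X_{E_2[\ell]}X_{E_1}=X_{E_1[\ell+1]}+X_{E_1[\ell-1]}$ do follow from Theorem \ref{XX} (the $\mathrm{Ext}$-part gives the indecomposable extension $E_2[\ell+1]$, the $\mathrm{Hom}(E_2,\tau E_1[\ell])=\mathrm{Hom}(E_2,E_2[\ell])$-part gives $E_2[\ell-1]$, both spaces being one-dimensional for every $\ell$), the elimination does produce $a_{2n+2}=(X_{E_1}X_{E_2}-2)a_{2n}-a_{2n-2}$, and the comparison with $f_n=h_n+h_{n-1}$ via the common recursion $u_{n+1}=X_\delta u_n-u_{n-1}$ (Proposition \ref{def1} with $n=1$) closes the induction once the base case $X_{E_1}X_{E_2}=X_\delta+2$ is known. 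I checked the base case directly: with the paper's orientation one gets $X_{E_1}=\frac{x_4x_5}{x_1}+\frac{(1+x_1)^2}{x_1x_2x_3}$ and $X_{E_2}=\frac{x_2x_3}{x_1}+\frac{(1+x_1)^2}{x_1x_4x_5}$, whose product $\frac{x_2x_3x_4x_5}{x_1^2}+\frac{2(1+x_1)^2}{x_1^2}+\frac{(1+x_1)^4}{x_1^2x_2x_3x_4x_5}$ equals the displayed $X_\delta$ plus $2$, and is symmetric in $x_2,\dots,x_5$, so the same value is shared by all three non-homogeneous tubes. That said, your route is genuinely different from the one this paper supplies. The proposition is stated here without proof (it is imported from \cite{DX1}), and the ingredient you single out as the crux --- $X_{\delta_i}=X_\delta+1$ --- is exactly the rank-two instance of the difference property, which the paper proves in the appendix (Theorem \ref{difference}) by a structural argument with orthogonal exceptional pairs and chains of short exact sequences valid for every tame quiver; combined with the tube calculus of Proposition \ref{def1} and Theorem \ref{16} this yields the statement without any explicit Laurent computation. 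What your approach buys is self-containedness and concreteness: for $\widetilde{D}_4$ everything is small enough that the quiver Grassmannians are points and the base case is a two-line polynomial identity, and the linear-recursion bookkeeping is cleaner than iterating exchange relations. What it gives up is generality --- the direct computation does not extend to tubes of higher rank or to the other affine types, whereas the paper's Theorem \ref{difference} does.
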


\begin{Thm}\cite{DX1}
A $\mathbb{Z}$-basis for cluster algebra of $\widetilde{D}_4$ is
the following set denoted by $\mathcal{B}(Q)$:
$$\{
X_{L},X_{m\delta},X_{E_i[2k+1]\oplus
R}|\mathrm{\underline{dim}}(E_i[2k+1]\oplus
R_1)\neq\mathrm{\underline{dim}}(E_j[2l+1]\oplus R_2),$$
$$\mathrm{Ext}_{\mathcal{C}(Q)}^1(L,L)=0,\mathrm{Ext}_{\mathcal{C}(Q)}^1(E_i[2k+1],R)=0,m,k,l\geq
0,1\leq i,j\leq 6\}$$ where $L$ is any non-regular exceptional
object, $R$ is $0$ or any regular exceptional module.
\end{Thm}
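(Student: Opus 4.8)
The plan is to obtain this statement as the $\widetilde{D}_4$ instance of the general $\mathbb{Z}$-basis result, namely Corollary \ref{81} (equivalently Theorem \ref{7} together with the freedom of assignment in Theorem \ref{30}), rather than proving spanning and linear independence directly. First I would note that the displayed orientation of $\widetilde{D}_4$ makes vertex $1$ a sink and vertices $2,3,4,5$ sources, so it is alternating; hence Corollary \ref{81} applies verbatim and tells us that the system $\mathcal{S}(Q)$ of representatives $X_L$ (for non-regular exceptional $L$) and $X_{T\oplus R}$ (for indecomposable regular self-extending $T$, regular exceptional $R$, no extensions between them, one per dimension vector) is already a $\mathbb{Z}$-basis of the cluster algebra $\mathcal{EH}(Q)$. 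All that remains is to recognise $\mathcal{B}(Q)$ as exactly such a system of representatives.

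The next step is to make the regular structure explicit. For $\widetilde{D}_4$ there are three non-homogeneous tubes, each of rank $2$, with regular simples $E_1,\dots,E_6$ paired as $\{E_1,E_2\},\{E_3,E_4\},\{E_5,E_6\}$ (each pair summing to $\delta$), together with a $\mathbb{P}^1$-family of homogeneous tubes whose regular simple has dimension vector $\delta$. I would then classify the indecomposable regular modules carrying a self-extension: in a rank-$2$ tube $E_i[n]$ is rigid precisely for $n=1$ and self-extending precisely for $n\geq 2$, and by parity $E_i[2k]$ has dimension vector $k\delta$ while $E_i[2k+1]$ has dimension vector $k\delta+\underline{\mathrm{dim}}E_i$; in a homogeneous tube $M[k]$ is self-extending for $k\geq 1$ with dimension vector $k\delta$. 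Thus the only self-extending regular indecomposables whose dimension vector is \emph{not} a multiple of $\delta$ are the odd-length non-homogeneous modules $E_i[2k+1]$.

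The decisive reduction is the bookkeeping of the imaginary direction. For each dimension vector $m\delta$ the admissible representatives are the homogeneous $M[m]$ and the even-length $E_i[2m]$; choosing the homogeneous one selects $X_{m\delta}=X_{M[m]}$, which lies in $\mathcal{EH}(Q)$ by Proposition \ref{5} (or its explicit form Proposition \ref{d4-delta}), and this is a legitimate value of the assignment $\phi$ in Theorem \ref{30}. Proposition \ref{yy} forces every $T\oplus R$ of dimension $m\delta$ to have $R=0$, so no regular exceptional summand competes here; conversely, whenever the dimension vector is not a multiple of $\delta$, Proposition \ref{yy} together with the classification above forces the self-extending summand to be an odd-length $E_i[2k+1]$, giving exactly the family $X_{E_i[2k+1]\oplus R}$ (the case $k=0$, where $E_i[1]=E_i$ is rigid, recovers the rigid regular objects, which the statement lists under the same family for uniformity). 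Together these identifications display $\mathcal{B}(Q)$ as $\mathcal{S}(Q)$ for a specific assignment, hence as a $\mathbb{Z}$-basis.

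I expect the main obstacle to lie not in invoking the general theorem but in the combinatorial verification of the previous paragraph: one must check that the three listed families $\{X_L\}$, $\{X_{m\delta}\}$ and $\{X_{E_i[2k+1]\oplus R}\}$ attach \emph{pairwise distinct} dimension vectors, so that Theorem \ref{40} yields their $\mathbb{Q}$-linear independence, and that they exhaust one representative for every dimension vector occurring in $\mathcal{S}(Q)$. The distinctness is supplied by Propositions \ref{yy} and \ref{9} (no regular object shares a dimension vector with a non-regular exceptional object, and no self-extending $T\oplus R$ with $R\neq0$ has dimension a multiple of $\delta$), while spanning with integer coefficients is inherited from Proposition \ref{11}; once these are assembled the theorem follows.
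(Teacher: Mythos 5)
Your proposal is essentially correct and follows the route the paper itself endorses: the $\widetilde{D}_4$ theorem is quoted from \cite{DX1} without an internal proof and is placed in Section 6 precisely as an instance of the general basis theorems, so the only real work is to exhibit $\mathcal{B}(Q)$ as a system of representatives $\mathcal{S}(Q)$ of Theorem \ref{7}/Corollary \ref{81} (equivalently, as the image of a specific assignment in Theorem \ref{30}), which is what you do. Two points deserve tightening. First, the clause ``Proposition \ref{yy} together with the classification forces the self-extending summand to be an odd-length $E_i[2k+1]$'' is stated too strongly: an object such as $E_1[2m]\oplus E_3$ is perfectly admissible in $\mathcal{S}(Q)$ and has dimension vector $m\delta+\mathrm{\underline{dim}}E_3$, not a multiple of $\delta$, yet its self-extending summand has even quasi-length. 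What you actually need (and what is true) is that every dimension vector in $\textbf{D}(Q)$ which is not a multiple of $\delta$ is \emph{realized} by some $E_i[2k+1]\oplus R$ with $\mathrm{Ext}^1_{\mathcal{C}(Q)}(E_i[2k+1],R)=0$; in the example above one trades $E_1[2m]\oplus E_3$ for $E_3[2m+1]$, using that $\mathrm{\underline{dim}}E_j[2k+1]=k\delta+\mathrm{\underline{dim}}E_j$ and that any nonzero rigid regular summand supplies such an $E_j$. Second, the separation of the three families by dimension vector should be spelled out: the hypothesis $\mathrm{\underline{dim}}(E_i[2k+1]\oplus R_1)\neq\mathrm{\underline{dim}}(E_j[2l+1]\oplus R_2)$ handles repetitions inside the third family, Proposition \ref{yy} together with $\mathrm{\underline{dim}}E_i\prec\delta$ separates it from $\{X_{m\delta}\}$, and Proposition \ref{9} separates both from $\{X_L\}$, after which Theorem \ref{40} and Proposition \ref{11} give independence and integral spanning exactly as you say. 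With these repairs the identification is complete.
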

\end{subsection}
\section{Inductive multiplication formulas for a tube}
In this section, let $Q$ be any tame quiver. Now we fix a tube with
rank $r$ and these regular simple modules are $E_1,\cdots,E_r$ with
$\tau E_{2}=E_{1}, \cdots, \tau E_1=E_r.$ Let $X_{E_i}$ be the
corresponding generalized cluster variable for $1 \leq i \leq r.$
With these notations, we have the following inductive cluster
multiplication formulas.
\begin{Thm}\label{16}
Let $i,j, k,l,m$ and $r$ be in $\bbz$ such that $1\leq k\leq
mr+l,0\leq l\leq r-1,1\leq i,j\leq r,m\geq 0$.

\nd (1)When $j\leq i$, then

1)for $k+i\geq r+j$, we have
$X_{E_i[k]}X_{E_j[mr+l]}=X_{E_i[(m+1)r+l+j-i]}X_{E_j[k+i-r-j]}+X_{E_i[r+j-i-1]}X_{E_{k+i+1}[(m+1)r+l+j-k-i-1]},$

2)for $k+i< r+j$ and $i\leq l+j\leq k+i-1$, we have
$X_{E_i[k]}X_{E_j[mr+l]}=X_{E_j[mr+k+i-j]}X_{E_i[l+j-i]}+X_{E_j[mr+i-j-1]}X_{E_{l+j+1}[k+i-l-j-1]},$

3)for other conditions, we have
$X_{E_i[k]}X_{E_j[mr+l]}=X_{E_i[k]\oplus E_j[mr+l]}$.

\nd (2)When $j> i$, then

1)for $k\geq j-i,$ we have
$X_{E_i[k]}X_{E_j[mr+l]}=X_{E_i[j-i-1]}X_{E_{k+i+1}[mr+l+j-k-i-1]}+X_{E_i[mr+l+j-i]}X_{E_j[k+i-j]},$

2)for $k< j-i$ and $i\leq l+j-r\leq k+i-1$, we have
$X_{E_i[k]}X_{E_j[mr+l]}=X_{E_j[(m+1)r+k+i-j]}X_{E_i[l+j-r-i]}+X_{E_j[(m+1)r+i-j-1]}X_{E_{l+j+1}[k+r+i-l-j-1]},$

3)for other conditions, we have
$X_{E_i[k]}X_{E_j[mr+l]}=X_{E_i[k]\oplus E_j[mr+l]}.$
\end{Thm}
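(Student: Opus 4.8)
The plan is to prove Theorem \ref{16} by reducing all cases to the base cluster multiplication theorems, namely Theorem \ref{CK} (the one-dimensional-extension exchange formula) and Theorem \ref{XX}, applied entirely inside a single tube of rank $r$. The objects $E_i[k]$ are the indecomposable regular modules in the tube, so their extensions and the middle terms of the associated triangles are governed by the combinatorics of the tube: for two indecomposable regulars $U=E_i[k]$ and $V=E_j[mr+l]$ one computes $\dim_{\bbc}\ext^1_{\mc(Q)}(U,V)$ from the ray/coray structure, and in the relevant cases this dimension is exactly $1$. First I would fix the bookkeeping conventions: quasi-socle, quasi-length, and the indexing $\tau E_2=E_1,\dots,\tau E_1=E_r$, so that the indices appearing in the statement (such as $E_{k+i+1}$, read modulo $r$, and quasi-lengths like $(m+1)r+l+j-i$) are precisely the quasi-socle and quasi-length of the middle terms of the two non-split triangles in Theorem \ref{CK}.

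The core of the argument is a case analysis that exactly matches the case split in the statement. In each subcase of part (1) and part (2) I would first verify that $\dim_{\bbc}\ext^1_{\mc(Q)}(E_i[k],E_j[mr+l])=1$ (equivalently that the two objects are ``compatible up to one extension''), using the standard description of homomorphisms and extensions between indecomposables in a tube. Granting this, Theorem \ref{CK} gives
$$
X_{E_i[k]}X_{E_j[mr+l]}=X_B+X_{B'},
$$
where $B$ and $B'$ are the middle terms of the two non-split triangles $E_i[k]\to B\to E_j[mr+l]\to$ and $E_j[mr+l]\to B'\to E_i[k]\to$. The entire content of subcases 1) and 2) is then the identification of $B$ and $B'$ as the indicated direct sums $E_i[\cdot]\oplus E_j[\cdot]$ or $E_{\cdot}[\cdot]\oplus E_{\cdot}[\cdot]$; this is read off from how a map of regular modules in a tube factors through rays and corays, which determines the quasi-socles and quasi-lengths of kernel and cokernel. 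For subcase 3), ``other conditions'', I would show that $\ext^1_{\mc(Q)}(E_i[k],E_j[mr+l])=0$ in both directions, whence by rule (3) of the Caldero--Chapton map $X_{E_i[k]}X_{E_j[mr+l]}=X_{E_i[k]\oplus E_j[mr+l]}$ directly.

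The main obstacle, and the step I expect to absorb most of the work, is the precise \emph{index arithmetic}: translating the inequalities $k+i\geq r+j$, $i\leq l+j\leq k+i-1$, etc., into the correct quasi-socle indices (modulo $r$) and quasi-lengths of the middle terms, and checking that these land in the claimed formulas without off-by-one errors. In a tube of finite rank the relevant Hom- and Ext-dimensions jump according to congruences mod $r$, so the numerology must be tracked carefully; the distinction between the cases $j\le i$ and $j>i$, and within them between $k+i\ge r+j$ and $k+i<r+j$ (resp.\ $k\ge j-i$ and $k<j-i$), is exactly the bookkeeping of whether the overlap of the two ``segments'' in the tube wraps around the mouth. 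Once the dimension-one verification and the middle-term identification are done correctly in one representative subcase, the remaining subcases follow by the same mechanism together with the symmetry $\ext^1(U,V)\leftrightarrow\ext^1(V,U)$ that swaps the two triangles; I would therefore present one subcase in full detail and indicate the parallel computations for the rest.
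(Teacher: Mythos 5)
Your strategy breaks down at its central claim, namely that in subcases 1) and 2) one has $\dim_{\bbc}\mathrm{Ext}^1_{\mc(Q)}(E_i[k],E_j[mr+l])=1$, so that a single application of Theorem \ref{CK} produces the formula. This is false except for small quasi-lengths. For two indecomposables in the same tube the extension space in the cluster category is $\mathrm{Ext}^1_{\bbc Q}(M,N)\oplus D\,\mathrm{Ext}^1_{\bbc Q}(N,M)$, and its dimension grows with the quasi-lengths. For instance, in a tube of rank $r=2$ take $M=N=E_1[2]$, which satisfies the hypotheses of case (1)1) with $i=j=1$, $k=2$, $m=1$, $l=0$: here $\dim_{\bbc}\mathrm{Ext}^1_{\mc(Q)}(M,M)=2$; taking $M=N=E_1[2r]$ gives dimension $4$. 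Consistently with this, the right-hand sides in 1) and 2) are sums of \emph{products} of generalized cluster variables attached to indecomposables of the same tube which in general admit nontrivial extensions between them (e.g.\ in the example above the second term is $X_{E_1}X_{E_2}=X_{E_1[2]}+1\neq X_{E_1\oplus E_2}$), so the identity is not of the form $X_B+X_{B'}$ for the two middle terms of a single pair of exchange triangles and cannot be an instance of Theorem \ref{CK}. When the Ext-dimension exceeds one, the applicable formulas (Theorem \ref{XX} or Theorem \ref{unified}) carry Euler-characteristic coefficients and many more terms, and your argument gives no way to resum them into the stated two-term shape.

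The statement is an \emph{inductive} multiplication formula and has to be proved as such. The paper argues by induction on $k$: the base cases $k=1,2$ do reduce to genuine one-dimensional exchange situations, where your identification of the middle terms via the ray/coray combinatorics of the tube is exactly what is required; the inductive step then uses the relation $X_{E_i[n+1]}=X_{E_i[n]}X_{E_{i+n}}-X_{E_i[n-1]}$ to pass from quasi-length $n$ to $n+1$ by substituting the induction hypothesis, expanding, and cancelling. Your treatment of subcase 3) (vanishing of $\mathrm{Ext}^1_{\mc(Q)}$ in both directions, then multiplicativity of the Caldero--Chapoton map) is correct, and your index bookkeeping would be reusable in the base cases, but without the induction on $k$ the argument for subcases 1) and 2) does not go through.
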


\begin{proof}
We only prove (1) and (2) is totally similar to (1).

1) When $k=1,$ by $k+i\geq r+j$ and $1\leq j\leq i\leq
r\Longrightarrow i=r$ and $j=1.$\\
Then by the cluster multiplication theorem in Theorem \ref{XX} or
Theorem \ref{CK}, we have
$$X_{E_r}X_{E_1[mr+l]}=X_{E_r[mr+l+1]}+X_{E_2[mr+l-1]}.$$
 When $k=2,$ by $k+i\geq r+j$ and $1\leq j\leq i\leq
r\Longrightarrow
i=r\ or\ i=r-1.$\\
For $i=r\Longrightarrow j=1\ or\ j=2$:\\
The case for $i=r$ and $j=1$, we have
\begin{eqnarray}
    && X_{E_r[2]}X_{E_1[mr+l]}  \nonumber\\
   &=& (X_{E_r}X_{E_1}-1)X_{E_1[mr+l]} \nonumber\\
  &=& X_{E_1}(X_{E_r[mr+l+1]}+X_{E_2[mr+l-1]})-X_{E_1[mr+l]} \nonumber\\
  &=& X_{E_1}X_{E_r[mr+l+1]}+(X_{E_1[mr+l]}+X_{E_3[mr+l-2]})-X_{E_1[mr+l]} \nonumber\\
  &=& X_{E_1}X_{E_r[mr+l+1]}+X_{E_3[mr+l-2]}.\nonumber
\end{eqnarray}
The case for $i=r$ and $j=2$, we have
\begin{eqnarray}
&& X_{E_r[2]}X_{E_2[mr+l]} \nonumber\\
  &=& (X_{E_r}X_{E_1}-1)X_{E_2[mr+l]} \nonumber\\
  &=& X_{E_r}(X_{E_1[mr+l+1]}+X_{E_3[mr+l-1]})-X_{E_2[mr+l]} \nonumber\\
  &=& X_{E_r[mr+l+2]}+(X_{E_2[mr+l]}+X_{E_r}X_{E_3[mr+l-1]})-X_{E_2[mr+l]} \nonumber\\
  &=& X_{E_r[mr+l+2]}+X_{E_r}X_{E_3[mr+l-1]}.\nonumber
\end{eqnarray}

For $i=r-1\Longrightarrow j=1$:
\begin{eqnarray}
 && X_{E_{r-1}[2]}X_{E_1[mr+l]} \nonumber\\
  &=& (X_{E_{r-1}}X_{E_r}-1)X_{E_1[mr+l]} \nonumber\\
  &=& X_{E_{r-1}}(X_{E_r[mr+l+1]}+X_{E_2[mr+l-1]})-X_{E_1[mr+l]} \nonumber\\
  &=& (X_{E_{r-1}[mr+l+2]}+X_{E_1[mr+l]})+X_{E_{r-1}}X_{E_2[mr+l-1]}-X_{E_1[mr+l]} \nonumber\\
  &=& X_{E_{r-1}[mr+l+2]}+X_{E_{r-1}}X_{E_2[mr+l-1]}.\nonumber
\end{eqnarray}\\
Now, suppose it holds for $k\leq n,$ then by induction we have
\begin{eqnarray*}
   && X_{E_i[n+1]}X_{E_j[mr+l]}\nonumber \\
   &=& (X_{E_i[n]}X_{E_{i+n}}-X_{E_i[n-1]})X_{E_j[mr+l]}\nonumber \\
   &=& X_{E_{i+n}}(X_{E_i[n]}X_{E_j[mr+l]})-X_{E_i[n-1]}X_{E_j[mr+l]} \nonumber\\
   &=& X_{E_{i+n}}(X_{E_i[(m+1)r+l+j-i]}X_{E_j[n+i-r-j]}+X_{E_i[r+j-i-1]}X_{E_{n+i+1}[(m+1)r+l+j-n-i-1]})\nonumber \\
   && -(X_{E_i[(m+1)r+l+j-i]}X_{E_j[n+i-r-j-1]}+X_{E_i[r+j-i-1]}X_{E_{n+i}[(m+1)r+l+j-n-i]}) \nonumber\\
   &=& X_{E_i[(m+1)r+l+j-i]}(X_{E_j[n+i+1-r-j]}+X_{E_j[n+i-r-j-1]}) \nonumber\\
   && +X_{E_i[r+j-i-1]}(X_{E_{n+i}[(m+1)r+l+j-n-i]}
+X_{E_{n+i+2}[(m+1)r+l+j-n-i-2]}) \nonumber\\
   && -(X_{E_i[(m+1)r+l+j-i]}X_{E_j[n+i-r-j-1]}+X_{E_i[r+j-i-1]}X_{E_{n+i}[(m+1)r+l+j-n-i]})\nonumber \\
   &=& X_{E_i[(m+1)r+l+j-i]}X_{E_j[n+i+1-r-j]}+X_{E_i[r+j-i-1]}X_{E_{n+i+2}[(m+1)r+l+j-n-i-2]}.
\end{eqnarray*}

2)  When $k=1,$ by $i\leq l+j\leq k+i-1\Longrightarrow i\leq
l+j\leq i\Longrightarrow i=l+j.$\\
Then by  Theorem \ref{XX} or Theorem \ref{CK}, we have
$$X_{E_{i}}X_{E_j[mr+l]}=X_{E_{l+j}}X_{E_j[mr+l]}=X_{E_j[mr+l+1]}+X_{E_j[mr+l-1]}$$
  When $k=2,$ by $i\leq l+j\leq k+i-1\Longrightarrow i\leq
l+j\leq i+1\Longrightarrow i=l+j\ or\ i+1=l+j$:\\
For $i=l+j$, we have
\begin{eqnarray*}
   && X_{E_{i}[2]}X_{E_j[mr+l]} \nonumber\\
   &=& X_{E_{l+j}[2]}X_{E_j[mr+l]}\nonumber \\
   &=& (X_{E_{l+j}}X_{E_{l+j+1}}-1)X_{E_j[mr+l]}\nonumber \\
   &=& (X_{E_j[mr+l+1]}+X_{E_j[mr+l-1]})X_{E_{l+j+1}}-X_{E_j[mr+l]}\nonumber \\
   &=& X_{E_j[mr+l+2]}+X_{E_j[mr+l]}+X_{E_{l+j+1}}X_{E_j[mr+l-1]}-X_{E_j[mr+l]}\nonumber \\
   &=& X_{E_j[mr+j+2]}+X_{E_{l+j+1}}X_{E_j[mr+l-1]}.
\end{eqnarray*}

For $i+1=l+j$, we have
\begin{eqnarray*}
   && X_{E_{i}[2]}X_{E_j[mr+l]}\nonumber \\
   &=& X_{E_{l+j-1}[2]}X_{E_j[mr+l]}\nonumber \\
   &=& (X_{E_{l+j-1}}X_{E_{l+j}}-1)X_{E_j[mr+l]}\nonumber \\
   &=& (X_{E_j[mr+l+1]}+X_{E_j[mr+l-1]})X_{E_{l+j-1}}-X_{E_j[mr+l]} \nonumber \\
   &=& X_{E_j[mr+l+1]}X_{E_{l+j-1}}+(X_{E_j[mr+l]}+X_{E_j[mr+l-2]})-X_{E_j[mr+l]}\nonumber \\
   &=& X_{E_j[mr+l+1]}X_{E_{l+j-1}}+X_{E_j[mr+l-2]}.
\end{eqnarray*}

Suppose it holds for $k\leq n,$ then by induction we have
\begin{eqnarray*}
   && X_{E_i[n+1]}X_{E_j[mr+l]}\nonumber \\
   &=& (X_{E_i[n]}X_{E_{i+n}}-X_{E_i[n-1]})X_{E_j[mr+l]}\nonumber \\
   &=& (X_{E_i[n]}X_{E_j[mr+l]})X_{E_{i+n}}-X_{E_i[n-1]}X_{E_j[mr+l]}\nonumber\\
   &=& (X_{E_j[mr+n+i-j]}X_{E_i[l+j-i]}+X_{E_j[mr+i-j-1]}X_{E_{l+j+1}[n+i-l-j-1]})X_{E_{i+n}} \nonumber\\
   && -(X_{E_j[mr+n+i-j-1]}X_{E_i[l+j-i]}+X_{E_j[mr+i-j-1]}X_{E_{l+j+1}[n+i-l-j-2]})\nonumber \\
   &=& (X_{E_j[mr+n+i+1-j]}+X_{E_j[mr+n+i-j-1]})X_{E_i[l+j-i]} \nonumber\\
   && +(X_{E_{l+j+1}[n+i-l-j]}+X_{E_{l+j+1}[n+i-l-j-2]})X_{E_j[mr+i-j-1]} \nonumber\\
   && -(X_{E_j[mr+n+i-j-1]}X_{E_i[l+j-i]}+X_{E_j[mr+i-j-1]}X_{E_{l+j+1}[n+i-l-j-2]})\nonumber \\
   &=& X_{E_j[mr+n+i+1-j]}X_{E_i[l+j-i]}+X_{E_j[mr+i-j-1]}X_{E_{l+j+1}[n+i-l-j]}.
\end{eqnarray*}

3)  It is trivial by the definition of the  Caldero-Chapton map.
\end{proof}

We explain Theorem \ref{16} by the following proposition involving
a tube of rank $3$.

\begin{Prop}\label{18}
(1) For $n\geq 3m+1$, then
$$X_{E_2[3m+1]}X_{E_1[n]}=X_{E_2}X_{E_1[n+3m]}+X_{E_1[n+3m-3]}+X_{E_2}X_{E_1[n+3m-6]}+X_{E_1[n+3m-9]}$$
$$\hspace{2.3cm}+\cdots+X_{E_2}X_{E_1[n-3m+6]}+X_{E_1[n-3m+3]}
+X_{E_2}X_{E_1[n-3m]},$$

(2) For $n\geq 3m+2$, then
$$X_{E_2[3m+2]}X_{E_1[n]}=X_{E_2[n+3m+2]}+X_{E_2}X_{E_2[n+3m-1]}+X_{E_2[n+3m-4]}+X_{E_2}X_{E_2[n+3m-7]}$$
$$\hspace{-0.3cm}+\cdots+X_{E_2[n-3m+2]}+X_{E_2}X_{E_2[n-3m-1]},$$

(3) For $n\geq 3m+3$, then
$$\hspace{-0.56cm}X_{E_2[3m+3]}X_{E_1[n]}=X_{E_1[n+3m+3]}+X_{E_3[n+3m+1]}+X_{E_2[n+3m-1]}+X_{E_1[n+3m-3]}$$
$$\hspace{1.1cm}+X_{E_3[n+3m-5]}+X_{E_2[n+3m-7]}+\cdots+X_{E_1[n-3m+3]}$$
$$\hspace{0.35cm}+X_{E_3[n-3m+1]}+X_{E_2[n-3m-1]}+X_{E_1[n-3m-3]}.$$
\end{Prop}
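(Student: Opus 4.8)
The plan is to prove each of the three identities separately by induction on $m$, using the rank-$3$ specialization of Theorem \ref{16} as the only engine; throughout I read the subscripts of $E$ modulo $3$ (so $E_4=E_1$, $E_0=E_3$, and so on) and use $\tau E_2=E_1$, $\tau E_3=E_2$, $\tau E_1=E_3$. Write $c\in\{1,2,3\}$ for the three cases. The base case $m=0$ is handled directly by Theorem \ref{16}: part (1) is just the identity $X_{E_2[1]}=X_{E_2}$, while for (2) and (3) one applies Theorem \ref{16} once or twice to $X_{E_2[2]}X_{E_1[n]}$ and $X_{E_2[3]}X_{E_1[n]}$ and matches the outcome against the stated sums.

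For the inductive step (so $m\geq 1$) I would first apply Theorem \ref{16}(1), subcase 1), to $X_{E_2[3m+c]}X_{E_1[n]}$; the inequality $k+i=3m+c+2\geq 4=r+j$ places us in that subcase. This produces the bottom term of the target expansion, of the form $X_{E_2}X_{E_c[n-3m-(c-1)]}$, together with a single long-times-short product $X_{E_2[n+2]}X_{E_1[3m+c-2]}$. Since $3m+c-2<n+2$ under the standing hypothesis $n\geq 3m+c$, I reorder the factors and apply Theorem \ref{16}(2), subcase 1), to this product; it splits off a second boundary term and leaves exactly $X_{E_2[3(m-1)+c]}X_{E_1[n+3]}$, which is the left-hand side of the \emph{same} identity with $(m,n)$ replaced by $(m-1,n+3)$. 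Feeding in the induction hypothesis and checking that the freshly peeled boundary terms continue, at the bottom end, the alternating pattern (parts (1), (2)) or the $3$-periodic pattern $E_1,E_3,E_2$ (part (3)) of quasi-lengths decreasing by $3$ (respectively by $2$) then reproduces the full stated sum. Part (3) needs one extra application of Theorem \ref{16}, because its bottom product $X_{E_2}X_{E_3[n-3m-2]}$ must itself be resolved into the two single terms $X_{E_2[n-3m-1]}+X_{E_1[n-3m-3]}$; this is also why, unlike (1) and (2), the expansion in (3) consists entirely of single generalized cluster variables, paralleling the homogeneous-tube formula of Proposition \ref{def1}.

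Two bookkeeping facts make the induction close up. First, the inequalities $n\geq 3m+1,\,3m+2,\,3m+3$ are exactly what guarantees that each invocation of Theorem \ref{16} lands in subcase 1) and that the shifted parameter still satisfies $n+3\geq 3(m-1)+c$, so the induction hypothesis is legitimately available. Second, each reduction lowers $m$ by one and raises $n$ by three, so the top term of the expansion (for instance $X_{E_2}X_{E_1[n+3m]}$ in (1)) is supplied by the induction hypothesis while the bottom terms are the ones just peeled off. I expect the only genuine difficulty to be the bookkeeping itself: reducing the $E$-subscripts modulo $3$, propagating the quasi-lengths correctly through each use of Theorem \ref{16}, and verifying that the peeled terms dovetail with the shifted hypothesis to give the precise alternating or cyclic pattern. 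There is no conceptual obstacle beyond Theorem \ref{16}.
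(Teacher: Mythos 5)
Your proposal is correct and follows essentially the same route as the paper: the paper's proof is exactly the two-step peeling you describe (one application of Theorem \ref{16}(1) followed by one of Theorem \ref{16}(2), reducing $X_{E_2[3m+c]}X_{E_1[n]}$ to $X_{E_2[3(m-1)+c]}X_{E_1[n+3]}$ plus two boundary terms, with the extra resolution of $X_{E_2}X_{E_3[n-3m-2]}$ in part (3)), written out as a telescoping chain of equalities rather than as a formal induction on $m$. The paper's own remark even notes that the identities can alternatively be verified by induction, which is precisely your packaging.
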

\begin{proof}
(1)  By Theorem \ref{16}, then we have
\begin{eqnarray*}
   && X_{E_2[3m+1]}X_{E_1[n]}\nonumber \\
   &=& X_{E_2[n+2]}X_{E_1[3m-1]}+X_{E_2}X_{E_1[n-3m]}\nonumber \\
   &=& X_{E_2[3m-2]}X_{E_1[n+3]}+X_{E_1[n-3m+3]}+X_{E_2}X_{E_1[n-3m]} \nonumber\\
   &=& X_{E_2[n+5]}X_{E_1[3m-4]}+X_{E_2}X_{E_1[n-3m+6]}+X_{E_1[n-3m+3]}
+X_{E_2}X_{E_1[n-3m]}\nonumber \\
   &=& X_{E_2[3m-5]}X_{E_1[n+6]}+X_{E_1[n-3m+9]}+X_{E_2}X_{E_1[n-3m+6]}+X_{E_1[n-3m+3]}
+X_{E_2}X_{E_1[n-3m]}\nonumber \\
   && \cdots\nonumber \\
   &=& X_{E_2}X_{E_1[n+3m]}+X_{E_1[n+3m-3]}+X_{E_2}X_{E_1[n+3m-6]}+X_{E_1[n+3m-9]}\nonumber \\
   && +\cdots+X_{E_2}X_{E_1[n-3m+6]}+X_{E_1[n-3m+3]}
+X_{E_2}X_{E_1[n-3m]}.
\end{eqnarray*}

(2) By  Theorem \ref{16}, then we have
\begin{eqnarray*}
   && X_{E_2[3m+2]}X_{E_1[n]}\nonumber \\
   &=& X_{E_2[n+2]}X_{E_1[3m]}+X_{E_2}X_{E_2[n-3m-1]}\nonumber \\
   &=& X_{E_2[3m-1]}X_{E_1[n+3]}+X_{E_2[n-3m+2]}+X_{E_2}X_{E_2[n-3m-1]}\nonumber \\
   &=& X_{E_2[n+5]}X_{E_1[3m-3]}+X_{E_2}X_{E_2[n-3m+5]}+X_{E_2[n-3m+2]}+X_{E_2}X_{E_2[n-3m-1]}\nonumber \\
   &=& X_{E_2[3m-4]}X_{E_1[n+6]}+X_{E_2[n-3m+8]}+X_{E_2}X_{E_2[n-3m+5]}+X_{E_2[n-3m+2]}+X_{E_2}X_{E_2[n-3m-1]}\nonumber \\
   && \cdots \nonumber\\
   &=& X_{E_2[n+3m+2]}+X_{E_2}X_{E_2[n+3m-1]}+X_{E_2[n+3m-4]}+X_{E_2}X_{E_2[n+3m-7]}\nonumber \\
   && +\cdots+X_{E_2[n-3m+2]}+X_{E_2}X_{E_2[n-3m-1]}.
\end{eqnarray*}

(3)  By  Theorem \ref{16}, then we have
\begin{eqnarray*}
   && X_{E_2[3m+3]}X_{E_1[n]}\nonumber \\
   &=& X_{E_2[n+2]}X_{E_1[3m+1]}+X_{E_2}X_{E_3[n-3m-2]}\nonumber \\
   &=& X_{E_2[n+2]}X_{E_1[3m+1]}+X_{E_2[n-3m-1]}+X_{E_1[n-3m-3]}\nonumber \\
   &=& X_{E_2[3m]}X_{E_1[n+3]}+X_{E_3[n-3m+1]}+X_{E_2[n-3m-1]}+X_{E_1[n-3m-3]}\nonumber \\
   && \cdots \nonumber\\
   &=& X_{E_1[n+3m+3]}+X_{E_3[n+3m+1]}+X_{E_2[n+3m-1]}\nonumber \\
   && +X_{E_3[n+3m-5]}+X_{E_2[n+3m-7]}+\cdots+X_{E_1[n-3m+3]} \nonumber\\
   && +X_{E_3[n-3m+1]}+X_{E_2[n-3m-1]}+X_{E_1[n-3m-3]}.
\end{eqnarray*}
\end{proof}
\begin{Cor}\label{20}
When $n=3k+1$,we can rewrite Proposition \ref{18} as following:

(1) For $n\geq 3m+1$, then
$$\hspace{-1.8cm}X_{E_2[3m+1]}X_{E_1[n]}=X_{E_1[n+3m+1]}+X_{E_1[n+3m-1]}+X_{E_1[n+3m-3]}+\cdots$$
$$\hspace{2.4cm}+X_{E_1[n-3m+5]}+X_{E_1[n-3m+3]}+X_{E_1[n-3m+1]}
+X_{E_1[n-3m-1]},$$

(2) For $n\geq 3m+2$, then
$$\hspace{-0.6cm}X_{E_2[3m+2]}X_{E_1[n]}=X_{E_2[n+3m+2]}+X_{E_2[n+3m]}+X_{E_2[n+3m-2]}+X_{E_2[n+3m-4]}$$
$$\hspace{0.6cm}+\cdots+X_{E_2[n-3m+2]}+X_{E_2[n-3m]}+X_{E_2[n-3m-2]},$$

(3) For $n\geq 3m+3$, then
$$\hspace{-2.7cm}X_{E_2[3m+3]}X_{E_1[n]}=X_{E_1[n+3m+3]}+X_{E_3[n+3m+1]}+X_{E_2[n+3m-1]}$$
$$\hspace{0.8cm}+X_{E_1[n+3m-3]}
+X_{E_3[n+3m-5]}+X_{E_2[n+3m-7]}+\cdots$$
$$\hspace{2.4cm}+X_{E_1[n-3m+3]}+X_{E_3[n-3m+1]}+X_{E_2[n-3m-1]}+X_{E_1[n-3m-3]}.$$
\end{Cor}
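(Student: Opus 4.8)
The plan is to read off Corollary~\ref{20} from Proposition~\ref{18} by specializing to $n=3k+1$ and expanding the mixed products that survive in Proposition~\ref{18}(1) and~(2). The role of the hypothesis $n=3k+1$ is that it forces every inner quasi-length to lie in the one residue class modulo $r=3$ for which multiplication by $X_{E_2}=X_{E_2[1]}$ splits into two single generalized cluster variables, instead of producing a decomposable term $X_{E_2[1]\oplus E_1[\cdot]}$.

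First I would record the two elementary identities obtained from the formula in Theorem~\ref{16}(1), sub-case~2), with $r=3$ and $k=1$. Taking $(i,j)=(2,1)$ with an inner length $N\equiv 1\pmod 3$ gives
$$X_{E_2}X_{E_1[N]}=X_{E_1[N+1]}+X_{E_1[N-1]},$$
and taking $(i,j)=(2,2)$ with $N\equiv 0\pmod 3$ gives
$$X_{E_2}X_{E_2[N]}=X_{E_2[N+1]}+X_{E_2[N-1]},$$
where in each case the spurious factors $X_{E_j[0]}$ produced by the formula equal $1$. Because $n=3k+1$, every subscript $N=n+3m-6s$ in the $X_{E_2}X_{E_1[\cdot]}$ terms of Proposition~\ref{18}(1) satisfies $N\equiv1\pmod 3$, and every subscript $N=n+3m-1-6s$ in the $X_{E_2}X_{E_2[\cdot]}$ terms of Proposition~\ref{18}(2) satisfies $N\equiv0\pmod 3$, so these identities apply term by term.

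Next I would substitute the expansions into Proposition~\ref{18}(1) and~(2) and reindex. In part~(1) each expanded term $X_{E_2}X_{E_1[n+3m-6s]}$ contributes the two quasi-lengths $n+3m-6s\pm1$, while the adjacent untouched term $X_{E_1[n+3m-3-6s]}$ contributes $n+3m-3-6s$; these three are consecutive in steps of $2$, and the leading length of the next block is exactly two below the trailing length of the current one, so the merged list is precisely the progression from $n+3m+1$ down to $n-3m-1$ in steps of $2$ ($3m+2$ summands). The same regrouping in part~(2) produces the progression from $n+3m+2$ down to $n-3m-2$ ($3m+3$ summands). Part~(3) needs no computation, since every summand of Proposition~\ref{18}(3) is already a single generalized cluster variable and the assertion is literally Proposition~\ref{18}(3) under the restriction $n=3k+1$.

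The only genuine work is the reindexing above, and this is where I expect the (routine) main obstacle to sit: one must check that interleaving the two-term expansions with the surviving pure terms leaves no gaps or repetitions and that the extreme indices come out right. It suffices to inspect the endpoints --- for instance, in part~(1) the final mixed term $X_{E_2}X_{E_1[n-3m]}$ supplies $n-3m\pm1$ and the final pure term $X_{E_1[n-3m+3]}$ supplies $n-3m+3$, pinning the bottom of the progression at $n-3m-1$ --- and to confirm the summand counts, after which the claimed identities follow.
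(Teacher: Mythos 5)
Your proposal is correct and follows essentially the same route as the paper: the paper's proof likewise records the two identities $X_{E_2}X_{E_1[n]}=X_{E_1[n+1]}+X_{E_1[n-1]}$ and $X_{E_2}X_{E_2[n-1]}=X_{E_2[n]}+X_{E_2[n-2]}$ (valid because $n=3k+1$) and then substitutes them into Proposition~\ref{18}. The only difference is that you spell out the reindexing and endpoint checks that the paper leaves implicit.
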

\begin{proof}
When $n=3k+1$,we have the following equations:
$$X_{E_2}X_{E_1[n]}=X_{E_1[n+1]}+X_{E_1[n-1]}$$
$$X_{E_2}X_{E_2[n-1]}=X_{E_2[n]}+X_{E_2[n-2]}$$
Then the proof  immediately follows from Proposition \ref{18}.
\end{proof}

In the same way by using Theorem \ref{16}, we have the following
proposition.
\begin{Prop}\label{19}
(1) For $n\geq 3m+1$, then
$$X_{E_1[3m+1]}X_{E_1[n]}=X_{E_1}X_{E_1[n+3m]}+X_{E_1[n+3m-3]}+X_{E_1}X_{E_1[n+3m-6]}+X_{E_1[n+3m-9]}$$
$$\hspace{1.7cm}+\cdots+X_{E_1}X_{E_1[n-3m+6]}+X_{E_1[n-3m+3]}
+X_{E_1}X_{E_1[n-3m]},$$

(2) For $n\geq 3m+2$, then
$$\hspace{-1.5cm}X_{E_1[3m+2]}X_{E_1[n]}=X_{E_1[2]}(X_{E_1[n+3m]}+X_{E_1[n+3m-6]}+\cdots+X_{E_1[n-3m]}),$$

(3) For $n\geq 3m+3$, then
$$X_{E_1[3m+3]}X_{E_1[n]}=X_{E_1[n+3m+3]}+X_{E_2}X_{E_1[n+3m]}+X_{E_1[n+3m-3]}+X_{E_2}X_{E_1[n+3m-6]}$$
$$\hspace{1.0cm}+\cdots+X_{E_1[n-3m+3]}+X_{E_2}X_{E_1[n-3m]}+X_{E_1[n-3m-3]}.$$
\end{Prop}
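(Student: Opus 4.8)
The plan is to establish Proposition \ref{19} in exactly the manner of Proposition \ref{18}, by iterating the inductive multiplication formula of Theorem \ref{16} inside our fixed tube of rank $r=3$. The three assertions (1)--(3) correspond to the three residues modulo $r=3$ of the quasi-length $3m+1,3m+2,3m+3$ of the left-hand factor, so I would treat them separately by the same telescoping computation. The $m=0$ instances (quasi-lengths $1,2,3$) are either trivial, since the right-hand side collapses to a single term, or follow from a single application of Theorem \ref{16}; they rest on the three-term relation $X_{E_i[k+1]}=X_{E_i[k]}X_{E_{i+k}}-X_{E_i[k-1]}$ that already drives the proof of Theorem \ref{16}.

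For the general case I would run the following loop on the ``active'' product. Since the generalized cluster variables live in the commutative algebra $\mathcal{AH}(Q)$, I may first commute the two factors so that the one of smaller quasi-length stands on the left and the hypotheses of Theorem \ref{16} (here $k+i\geq r+j$, together with the relevant residue constraints) are in force; this puts us in the first subcase of part~(1). One application then replaces the active product by a new one of the same shape in which the short quasi-length has dropped by $r=3$, and simultaneously splits off one extra summand. Iterating, the short quasi-length descends by $3$ at each step and the loop halts once it falls below the threshold where that subcase no longer applies, at which point a factor $X_{E_1[0]}=1$ appears and the telescope closes. This leaves the leading term $X_{E_1}X_{E_1[n+3m]}$ in part (1), $X_{E_1[2]}X_{E_1[n+3m]}$ in part (2), and $X_{E_1[n+3m+3]}$ in part (3); the summands peeled off along the way are spaced by $3$ (respectively $6$) in quasi-length, matching the stated right-hand sides.

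The one genuine deviation from Proposition \ref{18} is that here both factors share the quasi-socle $E_1$, so the term split off by the first subcase of part~(1) is a product $X_{E_1[2]}X_{E_c[\,\cdot\,]}$ whose cyclic index $c\equiv k+i+1\pmod 3$ need not equal $1$: in part (1) one gets $c=3$ and in part (3) one gets $c=2$, whereas in part (2) one already gets $c=1$ and the summand is in final form. Thus for parts (1) and (3) each peeled product must be expanded once more, now by the first subcase of part~(2) of Theorem \ref{16}; for instance $X_{E_1[2]}X_{E_3[k]}=X_{E_1}X_{E_1[k-1]}+X_{E_1[k+2]}$ and $X_{E_1[2]}X_{E_2[k]}=X_{E_1[k-2]}+X_{E_2}X_{E_1[k+1]}$, which are precisely the shapes $X_{E_1}X_{E_1[\,\cdot\,]}$, $X_{E_1[\,\cdot\,]}$ and $X_{E_2}X_{E_1[\,\cdot\,]}$ appearing in the statement. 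Collecting the leading term with these secondary expansions yields exactly the claimed right-hand sides.

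I expect the only real difficulty to be bookkeeping rather than anything conceptual: one must keep the cyclic indices $E_{k+i+1}$ and $E_{l+j+1}$ correct modulo $r=3$ at every stage, verify at each application which of the branches of Theorem \ref{16} is actually in force, and handle the two boundary effects---the initialization at $m=0$ and the termination of the loop where $X_{E_1[0]}=1$ enters. All of the mathematical content is contained in Theorem \ref{16}, so once the indices are tracked correctly the three identities drop out.
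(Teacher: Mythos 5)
Your proposal is correct and follows essentially the same route as the paper, which proves Proposition \ref{19} only implicitly ("in the same way" as Proposition \ref{18}) by iterating Theorem \ref{16} to telescope the product down in steps of $3$. Your additional observation — that because both factors share the quasi-socle $E_1$ the peeled-off terms $X_{E_1[2]}X_{E_3[\cdot]}$ and $X_{E_1[2]}X_{E_2[\cdot]}$ need one further expansion via part (2) of Theorem \ref{16} — is exactly the bookkeeping the paper leaves to the reader, and your two auxiliary identities check out against the theorem.
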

\begin{Remark}
(1) By the same method in Corollary \ref{20}, we can also rewrite
Proposition \ref{19} if we consider these different n. Here we
omit it.

(2) In fact, we can also check Proposition \ref{18}, Corollary
\ref{20} and Proposition \ref{19} by induction.
\end{Remark}

\section{Appendix on the difference property}

In this appendix, we show that the difference property holds for any
tame quiver without oriented cycles.

\begin{subsection}{}Let $Q$ be a tame quiver without oriented cycles and $\delta$
be its minimal imaginary root. For any $\bbc Q$-module $M$, define
$$\partial(M):=\langle \delta, \mathrm{\underline{dim}} M\rangle.$$ Let $(\mathrm{\underline{dim}} M)_i$ be the $i$-th component of $\mathrm{\underline{dim}} M$ for $i\in Q_0.$
It is called the defect of $M$. A pair $(X, Y)$ of $\bbc Q$-modules is called an
orthogonal exceptional pair if $X$ and $Y$ are exceptional modules
such that
$$
\mathrm{Hom}_{\bbc Q}(X, Y)=\mathrm{Hom}_{\bbc Q}(Y,
X)=\mathrm{Ext}^1_{\bbc Q}(Y, X)=0.
$$
We denote by $\mathcal{S}(A, B)$ the full subcategory of
$\mathrm{mod} \bbc Q$ generated by $X$ and $Y$.
  The following result is well-known.
\begin{Lemma}
Let $Q$ be a tame quiver. We fix a preprojective module $P$ and a
preinjective module $I$ such that $\partial(P)=-1$ and
$\mathrm{\underline{dim}}P+\mathrm{\underline{dim}}I=\delta$. Then
the subcategory $\mathcal{S}(P, I)$ is a hereditary abelian
subcategory and there is an equivalence
$$
\mathcal{F}:\mathrm{ mod} K\rightarrow \mathcal{S}(P, I)
$$
where $K$ is the Kronecker algebra $\bbc (\xymatrix{1\ar @<2pt>[r]
\ar@<-2pt>[r]& 2})$. Every tube of $\mathrm{mod} \bbc Q$ contains
a unique module $M$ as the middle term of the extension of $P$ by
$I.$
\end{Lemma}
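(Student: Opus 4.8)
The plan is to recognize $(I,P)$ as an orthogonal exceptional pair whose $\mathrm{Hom}/\mathrm{Ext}$ data is exactly that of the two simple $K$-modules, and then to transport the tube structure of the Kronecker algebra across the induced equivalence.

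\emph{Step 1: the pair $(I,P)$.} First I would check that $(I,P)$ is an orthogonal exceptional pair with $\dim_{\bbc}\mathrm{Ext}^1_{\bbc Q}(I,P)=2$ and $\mathrm{Ext}^1_{\bbc Q}(P,I)=0$. Since $P$ and $I$ are indecomposable preprojective, respectively preinjective, they are rigid, so $\langle\underline{\mathrm{dim}}P,\underline{\mathrm{dim}}P\rangle=\langle\underline{\mathrm{dim}}I,\underline{\mathrm{dim}}I\rangle=1$. As $\delta$ lies in the radical of the symmetrized Euler form one has $\langle\alpha,\delta\rangle=-\langle\delta,\alpha\rangle$ for all $\alpha$; combined with $\partial(P)=-1$ and $\underline{\mathrm{dim}}P+\underline{\mathrm{dim}}I=\delta$ this gives $\partial(I)=1$. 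Projectivity of $P$ forces $\mathrm{Ext}^1_{\bbc Q}(P,-)=0$, and $\mathrm{Hom}_{\bbc Q}(I,P)=0$ because no nonzero morphism runs from a preinjective to a preprojective module. A short Euler-form computation now yields
$$\langle\underline{\mathrm{dim}}P,\underline{\mathrm{dim}}I\rangle=\langle\underline{\mathrm{dim}}P,\delta\rangle-1=-\partial(P)-1=0,\qquad \langle\underline{\mathrm{dim}}I,\underline{\mathrm{dim}}P\rangle=\partial(P)-1=-2.$$
The first equality gives $\mathrm{Hom}_{\bbc Q}(P,I)=0$, so $\mathrm{Hom}(I,P),\mathrm{Hom}(P,I)$ and $\mathrm{Ext}^1(P,I)$ all vanish and the pair is orthogonal exceptional; the second, together with $\mathrm{Hom}_{\bbc Q}(I,P)=0$, gives $\dim_{\bbc}\mathrm{Ext}^1_{\bbc Q}(I,P)=2$.

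\emph{Step 2: the equivalence.} These numbers coincide with those of the two simple modules $S_1,S_2$ of $K$, for which $\dim_{\bbc}\mathrm{Ext}^1_K(S_1,S_2)=2$ and $\mathrm{Ext}^1_K(S_2,S_1)=0$. By the standard theory of exceptional pairs in a hereditary category (Ringel; Geigle--Lenzing), the smallest full subcategory $\mathcal{S}(P,I)$ of $\mathrm{mod}\,\bbc Q$ containing $I$ and $P$ and closed under extensions, kernels and cokernels is an exact abelian hereditary subcategory, and there is an exact equivalence $\mathcal{F}\colon\mathrm{mod}\,K\to\mathcal{S}(P,I)$ with $\mathcal{F}(S_1)=I$ and $\mathcal{F}(S_2)=P$, identifying $\mathrm{Ext}^1_K(S_1,S_2)$ with $\mathrm{Ext}^1_{\bbc Q}(I,P)$. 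Concretely, orthogonality makes every object of $\mathcal{S}(P,I)$ fit into a canonical sequence $0\to P^a\to M\to I^b\to 0$, which is precisely the shape of a $2$-Kronecker representation. Exactness of $\mathcal{F}$ then shows that a $K$-module of dimension $(a,b)$ is sent to a $\bbc Q$-module of dimension $a\,\underline{\mathrm{dim}}I+b\,\underline{\mathrm{dim}}P$ and of defect $a\partial(I)+b\partial(P)=a-b$.

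\emph{Step 3: the tube statement.} The regular simple $K$-modules $W_\lambda$, $\lambda\in\mathbb{P}^1$, have dimension $(1,1)$ and sit in non-split sequences $0\to S_2\to W_\lambda\to S_1\to 0$. Hence each $M_\lambda:=\mathcal{F}(W_\lambda)$ is indecomposable of dimension $\underline{\mathrm{dim}}P+\underline{\mathrm{dim}}I=\delta$ and defect $0$, so it is regular, and it occurs in a non-split sequence $0\to P\to M_\lambda\to I\to 0$, i.e.\ it is a middle term of an extension with ends $P$ and $I$. Since $\mathcal{F}$ is fully faithful, the $M_\lambda$ are pairwise non-isomorphic and are parametrized by $\mathbb{P}(\mathrm{Ext}^1_{\bbc Q}(I,P))\cong\mathbb{P}^1$. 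I would finish by invoking the standard parametrization of the tubes of a tame hereditary algebra: this $\mathbb{P}^1$ of extension classes is exactly the $\mathbb{P}^1$ indexing the tubes of $\mathrm{mod}\,\bbc Q$, the module $M_\lambda$ lying in the tube labelled $\lambda$, so each tube contains precisely one such middle term.

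\emph{Main obstacle.} The real content sits in Step 2 and the last step: proving rigorously that the subcategory generated by an orthogonal exceptional pair with these $\mathrm{Hom}/\mathrm{Ext}$ dimensions is genuinely $\mathrm{mod}\,K$, and that the bijection $\lambda\mapsto M_\lambda$ matches the tube parametrization so that ``exactly one per tube'' holds (injectivity uses that the $M_\lambda$ are pairwise non-isomorphic, while surjectivity is the reduction of the tame classification to the Kronecker case, Dlab--Ringel). If one wishes to avoid citing the exceptional-pair machinery, one constructs $\mathcal{F}$ by hand, sending $S_1\mapsto I$, $S_2\mapsto P$ and a chosen basis of $\mathrm{Ext}^1_K(S_1,S_2)$ to one of $\mathrm{Ext}^1_{\bbc Q}(I,P)$; the work is then to verify directly that $\mathcal{F}$ is exact, fully faithful, and has image closed under sub- and quotient objects, where the vanishing relations of Step 1 are exactly what is needed.
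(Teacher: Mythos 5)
The paper offers no proof of this lemma at all: it is introduced with ``The following result is well-known'' and left to the literature (the reduction to the Kronecker algebra in \cite{CB}, Section 9, and Dlab--Ringel). So there is nothing in the paper to compare against step by step; your proposal supplies the standard argument that the authors are implicitly citing, and it is essentially correct. The Euler-form bookkeeping in Step 1 checks out: $\langle\underline{\mathrm{dim}}P,\underline{\mathrm{dim}}I\rangle=0$ and $\langle\underline{\mathrm{dim}}I,\underline{\mathrm{dim}}P\rangle=-2$ do follow from $\partial(P)=-1$, rigidity of $P$ and $I$, and $\delta$ lying in the radical of the symmetrized form, and together with $\mathrm{Hom}(I,P)=0$ they give exactly the Kronecker $\mathrm{Hom}/\mathrm{Ext}$ pattern. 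One slip: you justify $\mathrm{Ext}^1_{\bbc Q}(P,I)=0$ by ``projectivity of $P$'', but $P$ is only assumed preprojective, not projective. The vanishing still holds, by the Auslander--Reiten formula $\mathrm{Ext}^1(P,I)\cong D\overline{\mathrm{Hom}}(I,\tau P)$ together with the absence of nonzero maps from preinjectives to preprojectives; you should phrase it that way.

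Two further remarks on where the weight of the argument sits. In Step 2 you correctly identify that realizing $\mathcal{S}(P,I)$ as $\mathrm{mod}\,K$ is the exceptional-pair/perpendicular-category machinery; that is exactly the content the paper is outsourcing, so invoking it is fair. In Step 3, the claim that distinct classes $\lambda\in\mathbb{P}(\mathrm{Ext}^1(I,P))$ land in distinct tubes and that every tube is hit is not a formal consequence of full faithfulness alone (in a non-homogeneous tube of rank $r$ there are $r$ non-isomorphic indecomposables of dimension vector $\delta$, and you must argue that exactly one of them is a middle term); this is precisely the Dlab--Ringel reduction of the tame classification to the Kronecker case, which you cite. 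Given that the paper itself treats the entire lemma as a citation, relying on that reference here is acceptable, but be aware that it is the one genuinely non-elementary input rather than something your Steps 1--2 already deliver.
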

Let $e\in Q_0$ be an extending vertex, i.e., $\delta_e=1$. Let $P_e$
(resp. $I_e$) be the projective module (resp. injective module)
corresponding to  $e$. Let $Q'$ be the subquiver of $Q$ deleting the
vertex $e$ and involving edges. Then $Q'$ is a Dynkin quiver. Let
$I$ (resp. $P$) be the unique indecomposable preinjective (resp.
preprojective) $\bbc Q$-module of dimension vector
$\delta-\mathrm{\underline{dim}} P_e $ (resp.
$\delta-\mathrm{\underline{dim}} I_e$). We obtain a subcategory
$\mathcal{S}(P_e, I)$ (resp. $\mathcal{S}(P, I_e)$). Consider the
universal extensions
$$
0\rightarrow P^2_e\rightarrow L\rightarrow I\rightarrow 0 \, \,
\mbox{ and }\, \, 0\rightarrow P\rightarrow L'\rightarrow
I^2_e\rightarrow 0.
$$
Here, $L$ (resp. $L'$) is unique preprojective (resp. preinjective)
$\bbc Q$-module with dimension vector $\delta
+\mathrm{\underline{dim}} P_e$ (resp.
$\delta+\mathrm{\underline{dim}} I_e$). Then we obtain a pair $(I,
T^{-1}L)$ as the mutation of the pair $(P_e, I)$ where $T$ is the
shift functor in $\md^b(\bbc Q).$ Indecomposable regular $\bbc
Q$-modules consist of tubes indexed by the projective line
$\mathbb{P}^1.$ As in \cite[Section 9]{CB}, each tube contains a
unique module in the set
$$\Omega=\{\mbox{iso.classes of indecomposable } X \mid
\mathrm{\underline{dim}} X=\delta, (\mathrm{\underline{dim}}
\mathrm{reg.top}(X))_e\neq 0\}.$$ There are the following
bijections:
$$
\xymatrix{\mathbb{P}\mathrm{Hom} (P_e, L)\ar[r] &\Omega &
\mathbb{P}\mathrm{Ext}^1(I, P_e)\ar[l]}
$$
induced by the map sending $0\neq \theta\in \mathrm{Hom} (P_e, L)$
to $\mathrm{Coker} \theta$ and the map sending $\varepsilon\in
\mathrm{Ext}^1(I, P_e)$ to the isomorphism class of the middle term.
We note that the bijection between $\mathbb{P}\mathrm{Hom} (P_e, L)$
and $\mathbb{P}\mathrm{Ext}^1(I, P_e)$ is induced by the universal
extension. Note that $0\neq \theta\in \mathrm{Hom} (P_e, L)$ is
mono. Dually, we also have an orthogonal exceptional pair $(P,
I_e)$. Each tube contains a unique module in the set
$$\Omega'=\{\mbox{isoclasses of indecomposable } X \mid
\mathrm{\underline{dim}} X=\delta, (\mathrm{\underline{dim}}
\mathrm{reg.socle}(X))_e\neq 0\}.$$ There are the following
bijections:
$$
\xymatrix{\mathbb{P}\mathrm{Hom} (L', I_e)\ar[r] &\Omega' &
\mathbb{P}\mathrm{Ext}^1(I_e, P)\ar[l]}
$$
induced by the map sending $0\neq \theta'\in \mathrm{Hom} (L', I_e)$
to $\mathrm{ker} \theta'$ and the map sending $\varepsilon'\in
\mathrm{Ext}^1(I_e, P)$ to the isomorphism class of the middle term.
Note that $0\neq \theta'\in \mathrm{Hom} (L', I_e)$ is epic.

Now we assume $e$ is a sink in the quiver $Q$. Let $\mathcal{T}$ be
any tube of rank $n>1$ and $E_1, \cdots, E_n$ be the regular simple
modules in $\mathcal{T}$. Without loss of generality, we assume that
$\tau E_i=E_{i-1}$ for $i=2, \cdots n$, $\tau E_1=E_n$ and
$(E_n)_{e}\neq 0.$ Let $E_i[r]$ be the indecomposable  regular
module in $\mathcal{T}$ with quasi-socle $E_i$ and quasi-length $r$
for $i=1, \cdots, n.$ Then $E_1[n]\in \Omega.$ It is easy to know
that
$$
\mathrm{dim}_{\bbc}\mathrm{Ext}^1_{\bbc Q}(E_1[n],
P_e)=\mathrm{dim}_{\bbc}\mathrm{Hom}_{\bbc Q}(P_e,
E_n[n])=\mathrm{dim}_{\bbc}\mathrm{Hom}_{\bbc Q}(P_e, E_1[n])=1.
$$
The corresponding short exact sequences are
$$
0\rightarrow P_e\rightarrow L\rightarrow E_1[n]\rightarrow 0,
$$
$$
0\rightarrow P_e\rightarrow E_n[n]\rightarrow E_1[n-1]\oplus
E_n/P_e\rightarrow 0
$$
and
$$\quad 0\rightarrow P_e\rightarrow E_1[n]\rightarrow I\rightarrow
0.
$$
The last exact sequence is induced by the following commutative
diagram
$$
\xymatrix{0\ar[r]&P_e\ar@{=}[d]\ar[r]&E_n\ar[d]\ar[r]&E_n/P_e\ar[d]\ar[r]&0\\
0\ar[r]&P_e\ar[r]&E_n[n]\ar[r]&X\ar[r]&0. }
$$
Indeed, since $E_n/P_e$ is a indecomposable preinjective module of
defect $1$ and using the snake lemma on the diagram, we have
$X\cong E_1[n-1]\oplus E_n/P_e.$ Hence, by Theorem \ref{CK}, we
have
$$
X_{P_e}X_{E_1[n]}=X_{L}+X_{E_2[n-1]}X_{\tau^{-1}(E_n/P_e)}.
$$
Consider the commutative diagram of exact sequences
$$
\xymatrix{&&P_e\ar@{=}[r]\ar[d]&P_e\ar[d]&\\0\ar[r]&E_1[n-1]\ar@{=}[d]\ar[r]&E_1[n]\ar[d]\ar[r]&E_n\ar[d]\ar[r]&0\\
0\ar[r]&E_1[n-1]\ar[r]&I\ar[r]&E_n/P_e\ar[r]&0.}
$$
The short exact sequence at the bottom induces a triangle in
$\mc(Q)$
$$
 E_2[n-1]\rightarrow \tau^{-1}I\rightarrow
\tau^{-1}(E_n/P_e)\rightarrow E_1[n-1].
$$
The short exact sequence
$$
0\rightarrow E_2[n-2]\rightarrow E_2[n-1]\rightarrow
E_n\rightarrow 0
$$
induces the short exact sequence
$$
0\rightarrow E_2[n-2]\oplus P_e\rightarrow E_2[n-1]\rightarrow
E_n/P_e\rightarrow 0.
$$
Hence, we have the triangle in $\mc(Q)$
$$
\tau^{-1}(E_n/P_e)\rightarrow E_2[n-2]\oplus P_e\rightarrow
E_2[n-1]\rightarrow E_n/P_e.
$$
 On the other hand, apply
$\mathrm{Hom}(-, E_1[n-1])$ to the short exact sequence
$$
0\rightarrow P_e\rightarrow E_n\rightarrow E_n/P_e\rightarrow 0
$$
to obtain $\mathrm{dim}_{\bbc}\mathrm{Ext}^1_{\bbc Q}(E_n/P_e,
E_1[n-1])=\mathrm{dim}_{\bbc}\mathrm{Ext}^1_{\bbc Q}(E_n,
E_1[n-1])=1.$ Since $\mathrm{Hom}_{\bbc Q}(E_n/P_e, E_2[n-1])=0$,
we deduce
$$
\mathrm{Ext}^{1}_{\mc(Q)}(\tau^{-1}(E_n/P_e), E_2[n-1])=
\mathrm{Ext}^{1}_{\mc(Q)}(E_n/P_e, E_1[n-1])=1.
$$
Hence, using Theorem \ref{CK} again, we have
$$
X_{E_2[n-1]}X_{\tau^{-1}(E_n/P_e)}=X_{\tau^{-1}I}+X_{E_2[n-2]}X_{P_e}.
$$
Let $E$ be a regular simple module of dimension vector $\delta$.
Then by the similar discussion as above and Theorem \ref{CK}, we
have
$$
X_{P_e}X_{E}=X_{L}+X_{\tau^{-1}I}.
$$
Therefore, we obtain the identity
$$
X_{E_1[n]}=X_{E}+X_{E_2[n-2]}.
$$
By Lemma 6.4 in \cite{Dupont1}, we have
$$
X_{E_i[n]}=X_{E}+X_{\tau^{-1}(E_{i}[n-2])}.
$$
for $i=1,\cdots, n$. This identity is called the difference
property.

If $e$ is a source, then $I_e$ is a simple injective module and $(P,
I_e)$ is an orthogonal exceptional pair. Let $\mathcal{T}$ be as
above. Then $\mathrm{reg.socle}(E_n[n])_e=(E_n)_e\neq 0$ and then
$E_n[n]\in \Omega'.$ It is easy to know that
$$
\mathrm{dim}_{\bbc}\mathrm{Ext}^1_{\bbc Q}( I_e,
E_n[n])=\mathrm{dim}_{\bbc}\mathrm{Hom}_{\bbc Q}(E_1[n],
I_e)=\mathrm{dim}_{\bbc}\mathrm{Hom}_{\bbc Q}(E_n[n], I_e)=1.
$$
The corresponding short exact sequences are
$$
0\rightarrow E_n[n]\rightarrow L'\rightarrow I_e\rightarrow 0, \quad
0\rightarrow P\rightarrow E_n[n]\rightarrow I_e\rightarrow 0
$$
and
$$
0\rightarrow Y\rightarrow E_1[n]\rightarrow I_e\rightarrow 0.
$$
Let $$ 0\rightarrow P'\rightarrow E_1\rightarrow I_e\rightarrow 0.
$$ be a short exact sequence. Here, $P'$ is a preprojective module.
Then we have $Y\cong P'\oplus E_1[n-1]$ by the following commutative
diagram
$$
\xymatrix{0\ar[r]&Y\ar[d]\ar[r]&E_1[n]\ar[d]\ar[r]&I_e\ar@{=}[d]\ar[r]&0\\
0\ar[r]&P'\ar[r]&E_n\ar[r]&I_e\ar[r]&0. }
$$ The short exact sequence
$$
0\rightarrow Y\rightarrow E_1[n]\rightarrow I_e\rightarrow 0
$$ induces a triangle in $\mc (Q)$
$$
 I_e\rightarrow \tau P'\oplus E_n[n-1]\rightarrow E_n[n]\rightarrow
 \tau I_e.
$$ Hence, by Theorem \ref{CK}, we
have
$$
X_{I_e}X_{E_n[n]}=X_{L'}+X_{E_n[n-1]}X_{\tau P'}.
$$
The short exact sequence
$$
0\rightarrow P'\rightarrow P\rightarrow E_n[n-1]\rightarrow 0
$$
induces the triangle in $\mc(Q)$
$$
\tau P'\rightarrow \tau P\rightarrow E_n[n-1]\rightarrow
 \tau^2 P'.
$$
The short exact sequence
$$
0\rightarrow P'\rightarrow E_n[n-1]\rightarrow E_1[n-2]\oplus
I_e\rightarrow 0
$$
induces the triangle in $\mc(Q)$
$$
E_n[n-1]\rightarrow E_1[n-2]\oplus I_e\rightarrow \tau P'\rightarrow
E_1[n-1].
$$
Hence, we have
$$
X_{E_n[n-1]}X_{\tau P'}=X_{\tau P}+X_{E_1[n-2]}X_{I_e}.
$$
Let $E$ be a regular simple module of dimension vector $\delta$.
Then by the similar discussion as above and Theorem \ref{CK}, we
have
$$
X_{I_e}X_{E}=X_{L'}+X_{\tau P}.
$$
Therefore, we obtain the identity
$$
X_{E_n[n]}=X_{E}+X_{E_1[n-2]}.
$$
By Lemma 6.4 in \cite{Dupont1}, we have
$$
X_{E_i[n]}=X_{E}+X_{\tau^{-1}(E_{i}[n-2])}.
$$
for $i=1,\cdots, n$. We have proved the following theorem
\begin{Thm}\label{difference}
Let $Q$ be a tame quiver without oriented cycles and $\mathcal{T}$
be a tube of rank $n>1$ with regular simple modules $E_1, \cdots,
E_n$ such that $E_1=\tau E_{2}, \cdots, E_n=\tau E_1$. Assume
$\delta$ is the minimal imaginary root for $Q$. Then we have
$$
X_{E_i[n]}=X_{E}+X_{\tau^{-1}E_{i}[n-2]}
$$
for $i=1, \cdots, n$ and any regular simple module $E$ of dimension
vector $\delta.$
\end{Thm}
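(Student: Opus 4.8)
The plan is to deduce the identity as a single cancellation in the field $\bbq(x_1,\ldots,x_n)$, after writing both $X_{E_1[n]}$ and $X_E+X_{\tau^{-1}(E_1[n-2])}$ as factors of one common product. First I would fix an extending vertex $e$, so that $\delta_e=1$, and by symmetry treat the case where $e$ is a sink; the case where $e$ is a source is entirely dual, with the simple injective $I_e$ playing the role of the simple projective $P_e$. Reindexing the tube $\mathcal{T}$ so that $(E_n)_e\neq 0$, the module $E_1[n]$ then has dimension vector $\delta$ and lies in $\Omega$, and a direct computation gives $\mathrm{dim}_{\bbc}\mathrm{Ext}^1_{\bbc Q}(E_1[n],P_e)=\mathrm{dim}_{\bbc}\mathrm{Hom}_{\bbc Q}(P_e,E_1[n])=1$, so that Theorem \ref{CK} will be applicable to the relevant products.

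The module-theoretic heart of the argument is to produce the short exact sequences linking $P_e$, $E_1[n]$, $E_n[n]$ and the preinjective module $I$ of defect $1$, and to pin down their middle terms. The key point is a snake-lemma computation on the commutative diagram whose rows are $0\to P_e\to E_n\to E_n/P_e\to 0$ and $0\to P_e\to E_n[n]\to X\to 0$: since $E_n/P_e$ is an indecomposable preinjective module of defect $1$, this forces $X\cong E_1[n-1]\oplus E_n/P_e$. Passing to $\mc(Q)$, each of these sequences (together with the triangles they induce) has one-dimensional ambient extension space, so Theorem \ref{CK} yields in each case a product formula with exactly two terms on the right.

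Assembling these, I expect the identities
$$X_{P_e}X_{E_1[n]}=X_L+X_{E_2[n-1]}X_{\tau^{-1}(E_n/P_e)},\quad X_{E_2[n-1]}X_{\tau^{-1}(E_n/P_e)}=X_{\tau^{-1}I}+X_{E_2[n-2]}X_{P_e},$$
together with $X_{P_e}X_E=X_L+X_{\tau^{-1}I}$ for any regular simple $E$ of dimension vector $\delta$; here the independence of the right-hand side from the choice of $E$ rests on Proposition \ref{independent}. Substituting the middle identity into the first and then subtracting the third collapses the preprojective term $X_L$ and the preinjective term $X_{\tau^{-1}I}$, leaving $X_{P_e}X_{E_1[n]}=X_{P_e}(X_E+X_{E_2[n-2]})$. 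Since $X_{P_e}$ is a nonzero element of the integral domain $\bbq(x_1,\ldots,x_n)$, I may cancel it and obtain $X_{E_1[n]}=X_E+X_{\tau^{-1}(E_1[n-2])}$, recalling $E_2[n-2]=\tau^{-1}(E_1[n-2])$.

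Finally, to upgrade this single base identity to the statement for every index $i$, I would invoke Lemma 6.4 of \cite{Dupont1}, which transports the relation around the tube to give $X_{E_i[n]}=X_E+X_{\tau^{-1}(E_i[n-2])}$ for all $i=1,\ldots,n$. I expect the main obstacle to be the bookkeeping of the second and third paragraphs rather than the algebra: one must verify that the relevant $\mathrm{Hom}$ and $\mathrm{Ext}^1$ spaces in $\mc(Q)$ are exactly one-dimensional, so that Theorem \ref{CK} produces precisely two terms, and one must correctly identify the indecomposable summands of the middle term $X$ via the snake lemma. Once these identifications are secured, the substitution, the cancellation of $X_{P_e}$, and the appeal to Dupont's lemma are routine.
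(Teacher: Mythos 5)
Your proposal is correct and follows essentially the same route as the paper's own proof: the same choice of extending vertex $e$ (sink case, with the source case dual), the same snake-lemma identification of the cokernel $E_1[n-1]\oplus E_n/P_e$, the same three two-term product identities from Theorem \ref{CK}, the same cancellation of $X_L$ and $X_{\tau^{-1}I}$ followed by division by $X_{P_e}$, and the same appeal to Lemma 6.4 of \cite{Dupont1} to propagate the identity to all $i$. No gaps; the only work left is the verification of the one-dimensionality of the relevant $\mathrm{Hom}$ and $\mathrm{Ext}^1$ spaces, exactly as you anticipate.
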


In \cite{Dupont1}, the author defined the generic variables for
any acyclic quiver $Q$. In particular, if $Q$ is a tame quiver,
the author showed the the set of generic variables is the
following set
$$
\mathcal{B}_{g}(Q):=\{X_{L}, X^m_{\delta}X_{R}\mid L\in \mc(Q),
\mathrm{Ext}^1(L,L)=0, m\geq 1, R \mbox{ is a regular rigid
module}\}.
$$
The author proved that if the difference property holds for $Q$,
then $\mathcal{B}_{g}(Q)$ is a $\mathbb{Z}$-basis of the cluster
algebra $\mathcal{EH}(Q)$. If $Q$ is of type $\widetilde{A}_{p,
q}$, the author showed the difference property holds. Hence, by
Theorem \ref{difference}, we have the following corollary.
\begin{Cor}\label{unipotent}
The set $\mathcal{B}_{g}(Q)$ is a $\mathbb{Z}$-basis of the
cluster algebra $\mathcal{EH}(Q)$. There is a unipotent transition
matrix between $\mathcal{B}_{g}(Q)$ and $\mathcal{B}(Q).$
\end{Cor}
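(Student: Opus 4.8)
The first assertion is immediate from the appendix. Theorem \ref{difference} establishes the difference property for every tame quiver, and, as recalled just before the statement, Dupont proved in \cite{Dupont1} that whenever the difference property holds for $Q$ the set $\mathcal{B}_g(Q)$ is a $\mathbb{Z}$-basis of $\mathcal{EH}(Q)$. Combining these gives the basis property at once, so the real content is the comparison with the basis $\mathcal{B}(Q)$ of Theorem \ref{30}.

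Since $\mathcal{B}_g(Q)$ and $\mathcal{B}(Q)$ are two $\mathbb{Z}$-bases of the same free $\mathbb{Z}$-module $\mathcal{EH}(Q)$, the transition matrix and its inverse automatically have integer entries; what must be shown is that, after a suitable indexing, this matrix is triangular with $1$'s on the diagonal. Both bases contain the common family $\{X_{L}\mid L\in\mc(Q),\ \mathrm{Ext}^{1}_{\mc(Q)}(L,L)=0\}$ of rigid variables, which I match to itself. For the remaining elements I index by dimension vectors: by Theorem \ref{40} distinct elements of either basis have distinct $\mathrm{\underline{dim}}$, and by Corollary \ref{dimension} both index sets exhaust $\bbz^{Q_0}=\textbf{D}(Q)\cup\textbf{E}(Q)$. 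Writing the canonical decomposition $\ud=m\delta+\ud_0$ with $m\geq 1$ and choosing a regular rigid $R$ with $\mathrm{\underline{dim}}R=\ud_0$, the element $X_\delta^{m}X_R$ of $\mathcal{B}_g(Q)$ and the element $X_{\phi(\ud)}$ of $\mathcal{B}(Q)$ share the denominator vector $\ud$, and I match them. It then suffices to prove that expanding $X_\delta^{m}X_R$ in $\mathcal{B}(Q)$ produces $X_{\phi(\ud)}$ with coefficient $1$ together with terms whose dimension vectors are $\prec\ud$.

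I first expand in the homogeneous realization. Writing $X_\delta=X_M$ for $M$ regular simple of dimension vector $\delta$ in a homogeneous tube (well defined by Proposition \ref{independent}) and $X_{j\delta}=X_{M[j]}$, the recursion $X_\delta X_{j\delta}=X_{(j+1)\delta}+X_{(j-1)\delta}$ of Proposition \ref{def1} yields $X_\delta^{m}=X_{m\delta}+\sum_{k\geq 1}c_{m,k}X_{(m-2k)\delta}$ with $c_{m,k}\in\bbz_{\geq 0}$ and unit top coefficient. By the multiplicativity $X_{A\oplus B}=X_{A}X_{B}$ of the Caldero--Chapton map, $X_{j\delta}X_R=X_{M[j]\oplus R}$; as $R$ is regular rigid it lies in the non-homogeneous tubes, so $M[j]$ and $R$ are in different tubes, the no-extension condition holds, and $X_{M[j]\oplus R}$ is the element of $\mathcal{B}'(Q)$ of dimension vector $j\delta+\mathrm{\underline{dim}}R$. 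Hence
$$X_\delta^{m}X_R=X_{M[m]\oplus R}+\sum_{k\geq 1}c_{m,k}X_{M[m-2k]\oplus R},$$
a triangular unit-diagonal expansion in $\mathcal{B}'(Q)$, every nondiagonal term of dimension vector $\prec\ud$. To pass from $\mathcal{B}'(Q)$ to $\mathcal{B}(Q)$ I compare, for each $\ud\in\textbf{D}(Q)$, the two realizations $M[m]\oplus R$ and $\phi(\ud)$ of the same dimension vector: Proposition \ref{6} gives $X_{M[m]\oplus R}=X_{\phi(\ud)}+\sum_{\mathrm{\underline{dim}}L\prec\ud}a_{L}X_{L}$. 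Substituting and iterating over the well-founded order $\prec$ expands $X_\delta^{m}X_R$ in $\mathcal{B}(Q)$. That the diagonal coefficient is exactly $1$ follows from Proposition \ref{2} and Theorem \ref{d}: $X_\delta^{m}X_R$ has denominator vector precisely $\ud$ with numerator constant term $1$, and the unique $\mathcal{B}(Q)$-element with that denominator vector, namely $X_{\phi(\ud)}$, is normalized the same way. Composing the two triangular unit-diagonal matrices yields the unipotent transition matrix.

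Proposition \ref{6} and the comparison of realizations are stated for the alternating orientation, so for a general tame $Q$ I first reduce to that case through the canonical cluster algebra isomorphism $\Phi$ and the BGP reflection functors of Lemmas \ref{50} and \ref{100} (as in Theorem \ref{60}); being a ring isomorphism that matches $\mathcal{B}_g$, $\mathcal{B}'$ and $\mathcal{B}$ with the corresponding bases of the reflected quiver, $\Phi$ carries a unipotent transition matrix to a unipotent one, while the remaining type $\widetilde{A}_{p,q}$ with $p\neq q$ is handled directly in \cite{Dupont1}. The step I expect to be the main obstacle is precisely this leading-term bookkeeping: guaranteeing, through the reassignment $\mathcal{B}'(Q)\to\mathcal{B}(Q)$ and all reorderings, that the diagonal entry remains exactly $1$ while every correction term strictly descends in $\prec$, which is what upgrades the rational identity of Proposition \ref{6} to a genuinely unipotent integral change of basis.
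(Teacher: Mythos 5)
Your proposal is correct in substance, but it takes a genuinely different route from the paper's proof, which is shorter and needs no orientation hypothesis. The paper substitutes the difference property of Theorem \ref{difference} in the form $X_{E}=X_{E_i[n]}-X_{E_{i+1}[n-2]}$ directly into $X_{\delta}^{m}X_{R}=X_{E}^{m}X_{R}$, so that the whole computation takes place inside a \emph{non-homogeneous} tube: the tube multiplication formulas of Theorem \ref{16}, together with Lemma \ref{induction}, expand $X_{E}^{m}$ as $X_{E_i[mn]}$ plus integer multiples of $X_{L}$ with $\mathrm{\underline{dim}}L\prec m\delta$, and a second application of Theorem \ref{16} and Theorem \ref{30} to the product with $X_{R}$ yields $X_{T\oplus R'}+\sum_{\mathrm{\underline{dim}}Y\prec\mathrm{\underline{dim}}(T\oplus R')}n_{Y}X_{Y}$ with $X_{T\oplus R'},X_{Y}\in\mathcal{B}(Q)$ and $n_{Y}\in\bbz$ --- one uniform argument for every tame quiver, with no reduction to the alternating case and no separate treatment of $\widetilde{A}_{p,q}$, $p\neq q$. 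You instead expand $X_{\delta}^{m}$ in the \emph{homogeneous} realization via the Chebyshev-type recursion of Proposition \ref{def1}, which lands you unit-triangularly in $\mathcal{B}'(Q)$ (in effect proving Corollary \ref{un} first), and then convert $\mathcal{B}'(Q)$ to $\mathcal{B}(Q)$ via Proposition \ref{6}; since Propositions \ref{2}, \ref{5} and \ref{6} are proved only for alternating orientation, this forces your BGP-reflection reduction and the deferral of $\widetilde{A}_{p,q}$, $p\neq q$ to \cite{Dupont1}. That reduction is workable but has two soft spots you should patch: Lemmas \ref{50} and \ref{100} transport $X_{M}$ only for rigid objects, regular modules in non-homogeneous tubes, and regular simples of dimension $\delta$, so to carry $X_{E[k]}$ with $k\geq 2$ homogeneous across $\Phi$ you must observe that $X_{E[k]}$ is a fixed integer polynomial in $X_{E}$ by Proposition \ref{def1}; and for $\widetilde{A}_{p,q}$ with $p\neq q$ the unipotent transition to an arbitrary assignment $\phi$ is not literally stated in \cite{Dupont1}, though the paper's own tube computation covers it. What your route buys is a transparent two-step factorization (unipotent from $\mathcal{B}_g(Q)$ to $\mathcal{B}'(Q)$, then from $\mathcal{B}'(Q)$ to $\mathcal{B}(Q)$) and an explicit identification of the intermediate coefficients $c_{m,k}$; what the paper's route buys is uniformity and brevity, since the difference property already lives in the non-homogeneous tubes where $\mathcal{B}(Q)$ is defined. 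Your opening observation that integrality of the transition matrix is automatic because both sets are $\bbz$-bases of the same free module is a clean simplification the paper uses only implicitly, and it correctly reduces the rational coefficients of Proposition \ref{6} to the required integral unit-triangular statement.
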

\begin{proof}\
With the notation in Theorem \ref{difference}, we have
$$
X^m_{\delta}X_{R}=X_{E}^mX_{R}=(X_{E_i[n]-X_{E_{i+1}[n-2]}})^mX_{R}=(X_{E_i[mn]}+\sum_{\mathrm{\underline{dim}}L\prec
m\delta}n_LX_L)X_R
$$
where $L$ is a regular module and $n_L\in \bbz$. Using Lemma
\ref{induction}, Theorem \ref{16} and Theorem \ref{30}, we obtain
$$
(X_{E_i[mn]}+\sum_{\mathrm{\underline{dim}}L\prec
m\delta}n_LX_L)X_R=X_{T\oplus
R'}+\sum_{\mathrm{\underline{dim}}Y\prec
\mathrm{\underline{dim}}(T\oplus R')}n_YX_{Y}.
$$
where both $T\oplus R'$ and $Y$ are  regular modules such that
$X_{T\oplus R'}, X_{Y}\in \mathcal{B}(Q)$ and $n_Y\in \bbz$.  This
completes the proof of the corollary.
\end{proof}

\begin{Cor}\label{un}
There is a unipotent transition matrix between
$\mathcal{B}_{g}(Q)$ and $\mathcal{B}'(Q).$
\end{Cor}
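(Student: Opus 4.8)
The plan is to mirror the proof of Corollary \ref{unipotent}, but to replace the general assignment underlying $\mathcal{B}(Q)$ by the canonical-decomposition assignment underlying $\mathcal{B}'(Q)$; this has the advantage of working directly on homogeneous tubes, and hence for an arbitrary (not necessarily alternating) orientation. Since $\mathcal{B}_g(Q)$ and $\mathcal{B}'(Q)$ share the same rigid part $\{X_L\mid \mathrm{Ext}^1_{\mc(Q)}(L,L)=0\}$, on which the transition is the identity, I only need to understand the imaginary generators $X^m_\delta X_R$ of $\mathcal{B}_g(Q)$, where $m\geq 1$ and $R$ is regular rigid. The natural partner map sends $X^m_\delta X_R$ to $X_{E[m]\oplus R}\in\mathcal{B}'(Q)$, both being indexed by $\ud=m\delta+\mathrm{\underline{dim}}R\in\textbf{D}(Q)$, and I would show that under this map each generator equals its partner plus an integral combination of partners of strictly smaller dimension vector.

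Concretely, I would first write $X_\delta=X_{M[1]}$ for $M$ a regular simple of dimension vector $\delta$ in a homogeneous tube, this being independent of the tube by Proposition \ref{independent}. Iterating the recursion of Proposition \ref{def1} yields $X^m_\delta=X_{M[m]}+\sum_{j<m}c_jX_{M[j]}$ with $c_j\in\mathbb{Z}_{\geq 0}$ and $\mathrm{\underline{dim}}M[j]=j\delta\prec m\delta$. Multiplying by $X_R$ and using multiplicativity of the Caldero-Chapton map (rule (3)), one has $X_{M[j]}X_R=X_{M[j]\oplus R}$; moreover $M[j]$ lies in a homogeneous tube while every summand of the regular rigid module $R$ lies in a non-homogeneous tube, so $\mathrm{Ext}^1_{\mc(Q)}(M[j],R)=0$ and each $M[j]\oplus R$ (for $j\geq 1$) represents a dimension vector in $\textbf{D}(Q)$, the case $j=0$ giving the rigid element $X_R$. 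Hence
$$
X^m_\delta X_R=X_{E[m]\oplus R}+\sum_{j<m}c_j\,X_{E[j]\oplus R},
$$
a $\mathbb{Z}$-combination of $\mathcal{B}'(Q)$-elements whose leading term $X_{E[m]\oplus R}$ is exactly the partner of $X^m_\delta X_R$ and whose remaining terms have strictly smaller dimension vector.

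Ordering the two bases compatibly with the partial order $\prec$ on dimension vectors --- these being pairwise distinct by the analogues of Propositions \ref{yy} and \ref{9}, transported to an arbitrary orientation through Theorem \ref{60} --- the displayed identities show that the transition matrix from $\mathcal{B}_g(Q)$ to $\mathcal{B}'(Q)$ is triangular with all diagonal entries $1$, hence unipotent, and its inverse is again a unipotent integer matrix. I expect the only delicate points to be organisational: checking that the partner map $X^m_\delta X_R\mapsto X_{E[m]\oplus R}$ is a genuine bijection onto the imaginary part of $\mathcal{B}'(Q)$ (which follows from uniqueness of the canonical decomposition and of the regular rigid module with a given dimension vector), and confirming that no two distinct generators contribute the same leading dimension vector. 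Integrality is automatic here, since the coefficients $c_j$ produced by Proposition \ref{def1} are already integers; alternatively, one could deduce the whole statement by composing the unipotent matrix of Corollary \ref{unipotent} with the base-change matrix from $\mathcal{B}(Q)$ to $\mathcal{B}'(Q)$, whose unipotence for alternating $Q$ comes from Propositions \ref{5} and \ref{6} and then extends to all orientations via Theorem \ref{60}.
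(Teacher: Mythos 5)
Your argument is correct, and it is in fact cleaner than the route the paper implicitly takes. The paper states Corollary \ref{un} without proof immediately after Corollary \ref{unipotent}, whose own proof runs through the difference property (Theorem \ref{difference}) to rewrite $X_E^m$ inside a \emph{non-homogeneous} tube as $(X_{E_i[n]}-X_{E_{i+1}[n-2]})^m$, and then invokes Lemma \ref{induction}, Theorem \ref{16} and Theorem \ref{30} to reorganize the product with $X_R$; the intended proof of Corollary \ref{un} is presumably either the same computation with $\mathcal{B}'(Q)$ as target or the composition of unipotent matrices you mention at the end. Your primary route avoids all of that machinery: since the canonical-decomposition assignment places $E[m]$ in a homogeneous tube, Proposition \ref{def1} alone gives $X_\delta^m=X_{E[m]}+\sum_{j<m}c_jX_{E[j]}$ with explicit nonnegative integers $c_j$ (supported on $j\equiv m \bmod 2$), and the orthogonality of homogeneous and non-homogeneous tubes turns multiplication by $X_R$ into the purely formal $X_{E[j]}X_R=X_{E[j]\oplus R}$, with each $j\delta+\underline{\mathrm{dim}}R$ again canonically decomposed. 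This buys you independence from the difference property and from the inductive tube formulas of Theorem \ref{16}, works uniformly in any orientation, and even yields nonnegativity of the off-diagonal entries, which the paper's argument does not make visible. The only points to be careful about are the ones you already flag: that regular rigid summands never lie in homogeneous tubes (true, since $\tau M\cong M$ forces self-extensions there), and that the partner map is a bijection respecting dimension vectors, which is exactly what Corollary \ref{dimension} and the uniqueness of the canonical decomposition provide.
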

We will illustrate Theorem \ref{difference} by two examples in the
following.
\end{subsection}
\begin{subsection}{}
Let $Q$ be a quiver of type $\widetilde{A}_{p, q}$ as follows.
$$
\xymatrix@R=0.8pc{&2\ar[r]&\cdots\ar[r]&p\ar[rd]&\\
1\ar[ur]\ar[dr]&&&&p+1\\
&p+q\ar[r]&\cdots\ar[r]&p+2\ar[ur]&}
$$
The difference property has been proved to hold for
$\widetilde{A}_{p, q}$ by the different method in \cite{Dupont1}.
Here we give an alternative proof. There are two non-homogeneous
tubes (denoted by $\mathcal{T}_0, \mathcal{T}_\infty$) in the set
of indecomposable regular modules. The minimal imaginary root of
$Q$ is $(1,1, \cdots, 1).$ Let $\lambda\in \bbc^{*}$ and
$E(\lambda)$ be the regular simple module as follows
$$
\xymatrix@R=0.8pc{&\bbc\ar[r]&\cdots\ar[r]&\bbc\ar[rd]^{1}&\\
\bbc\ar[ur]\ar[dr]&&&&\bbc\\
&\bbc\ar[r]&\cdots\ar[r]&\bbc\ar[ur]^{\lambda}&}
$$
Its proper submodules are of the forms as follows:
$$
\xymatrix@R=0.8pc{&\bbc\ar[r]&\cdots\ar[r]&\bbc\ar[rd]^{1}&\\
&&&&\bbc\\
&\bbc\ar[r]&\cdots\ar[r]&\bbc\ar[ur]^{\lambda}&}
$$
The regular simple modules in $\mathcal{T}_0$ are
$$ \xymatrix@R=0.5pc{&&\bbc\ar[r]&\cdots\ar[r]&\bbc\ar[rd]^{1}&\\
E^{(0)}_{t}=S_{p+t+1} \mbox{ for }1\leq t\leq q-1,\quad E^{(0)}_q: &\bbc\ar[ur]\ar[dr]&&&&\bbc\\
&&0\ar[r]&\cdots\ar[r]&0\ar[ur]^{0}&}  $$ The regular module
$E^{(0)}_1[q]$ has the form as follows
$$
\xymatrix@R=0.8pc{&\bbc\ar[r]&\cdots\ar[r]&\bbc\ar[rd]^{1}&\\
\bbc\ar[ur]\ar[dr]&&&&\bbc\\
&\bbc\ar[r]&\cdots\ar[r]&\bbc\ar[ur]^{0}&}
$$
Its proper submodules are of the following two forms:
$$
\xymatrix@R=0.8pc{&\bbc\ar[r]&\cdots\ar[r]&\bbc\ar[rd]^{1}&\\
&&&&\bbc\\
&\bbc\ar[r]&\cdots\ar[r]&\bbc\ar[ur]^{0}&}
$$
and
$$
\xymatrix@R=0.8pc{&0\ar[r]&\cdots\ar[r]&0\ar[rd]^{1}&\\
&&&&0\\
&\bbc\ar[r]&\cdots\ar[r]&\bbc\ar[ur]^{0}&}
$$
The proper submodules with the second form are the submodules of
$E^{(0)}_2[q-2]$  with regular socle $S_{p+2}$.  Hence, we have
$$
\chi(Gr_{\ue}(E^{(0)}_1[q]))=\chi(Gr_{\ue}(E(\lambda)))+\chi(Gr_{\ue'}(E^{(0)}_2[q-2]))
$$
for any dimension vector $\ue$ and
$\ue'=\ue-\mathrm{\underline{dim}}S_{p+2}.$ Indeed, by \cite[Lemma
1]{Hubery2005}, we have
$$
\mathrm{\underline{dim}}E^{(0)}_1R+\mathrm{\underline{dim}}E^{(0)}_qR^{tr}=\mathrm{\underline{dim}}E^{(0)}_1+\mathrm{\underline{dim}}E^{(0)}_q.
$$
By definition, we deduce the difference property for
$\widetilde{A}_{p, q}$.
\end{subsection}

\begin{subsection}{}
Let $Q$ be a tame quiver of type $\widetilde{D}_m$ for $m\geq 4$
as follows
$$
\xymatrix@R=0.8pc{m\ar[rd]&&&&&1\\
&m-1\ar[r]&\cdots\ar[r]&4\ar[r]&3\ar[ru]\ar[rd]&\\
m+1\ar[ru]&&&&&2}
$$
There are three non-homogeneous tubes (denoted by $\mathcal{T}_0,
\mathcal{T}_1, \mathcal{T}_{\infty}$). The minimal imaginary root
of $Q$ is $(1,1,2,\cdots,2,1,1).$ Let $\lambda\in \bbc$ and
$E(\lambda)$ be the regular module as follows:
$$
\xymatrix@R=0.8pc{\bbc\ar[rd]^{\left(%
\begin{array}{c}
  1 \\
  0 \\
\end{array}%
\right)}&&&&&\bbc\\
&\bbc^2\ar[r]^1&\cdots\ar[r]^1&\bbc^2\ar[r]^1&\bbc^2\ar[ru]^{\left(%
\begin{array}{cc}
  1 & 1 \\
\end{array}%
\right)}\ar[rd]_{\left(%
\begin{array}{cc}
  \lambda & 1 \\
\end{array}%
\right)}&\\
\bbc\ar[ru]_{\left(%
\begin{array}{c}
  0 \\
  1 \\
\end{array}%
\right)}&&&&&\bbc}
$$
We note that $E(0)\in \mathcal{T}_0$ and $E(1)\in \mathcal{T}_1.$
Replacing $\left(%
\begin{array}{cc}
  \lambda & 1 \\
\end{array}%
\right)$ by $\left(%
\begin{array}{cc}
  1 & 0 \\
\end{array}%
\right)$, we obtain a regular module $E(\infty)\in
\mathcal{T}_{\infty}.$ The regular simple modules in
$\mathcal{T}_1$ are
$$
\xymatrix@R=0.6pc{&\bbc\ar[rd]^{1}&&&&\bbc\\
E^{(1)}_t=S_{t+2} \mbox{ for } 1\leq t \leq m-3,
\quad E^{(1)}_{m-2}=:&&\bbc\ar[r]^1&\cdots\ar[r]^1&\bbc\ar[ru]^{1}\ar[rd]_{1}&\\
&\bbc\ar[ru]_{1}&&&&\bbc}
$$
We note that $E(1)=E^{(1)}_1[m-2].$

Let $\ue=(e_1,\cdots, e_{m+1})$ be a dimension vector satisfying
one of the following condition:
\begin{enumerate}
    \item $e_2\neq 0$;
    \item $e_2=0$, $e_3=0$;
    \item $e_2=0$, $e_3=2$.
\end{enumerate}
Let $M$ be a submodule of $E(\lambda)$ for $\lambda\in \bbc^*$
with $\mathrm{\underline{dim}}M=\ue$. Then it is clear that there
are unique submodule $M(0)$ of $E(0)$ and submodule $M(1)$ of
$E(1).$ Hence, we have
$$
\chi(Gr_{\ue}(E(\lambda)))=\chi(Gr_{\ue}(E(0)))=\chi(Gr_{\ue}(E(1)))
\mbox{ and } \chi(Gr_{\ue}(E^{(1)}_1[m-3]))=0
$$
for $\lambda\in \bbc^{*}$. Now we consider the proper submodules
$M$ of $E(1)$ with dimension vector $\ue$ satisfying $e_2=0,
e_3=1.$ If $M=X\oplus Y$. we assume that $X$ satisfies
$(\mathrm{\underline{dim}}X)_3=0$, then $X$ is also the submodule
of $E(\lambda)$ for any $\lambda\in \bbc^*.$ Hence, we can assume
that $M$ is indecomposable with $(\mathrm{\underline{dim}}M)_3=1$
and $(\mathrm{\underline{dim}}M)_2=0$. Then $M$ has the form as
follows
$$
\xymatrix@R=0.7pc{0\ar[rd]&&&&0\\
&\cdots\ar[r]&\bbc\ar[r]^1&\bbc\ar[ru]\ar[rd]&\\
0\ar[ru]&&&&0}
$$
Let $N$ be an indecomposable submodule of $E(\lambda)$ such that
$(\mathrm{\underline{dim}}N)_3=1$ and
$(\mathrm{\underline{dim}}N)_2=0.$ Then $N$ has the form as
follows
$$
\xymatrix@R=0.7pc{0\ar[rd]&&&&\bbc\\
&\cdots\ar[r]&\bbc\ar[r]^1&\bbc\ar[ru]^{1-\lambda}\ar[rd]&\\
0\ar[ru]&&&&0}
$$
It corresponds to the submodule of $E(1)$
$$
\xymatrix@R=0.7pc{0\ar[rd]&&&&\bbc\\
&\cdots\ar[r]&\bbc\ar[r]^1&\bbc\ar[ru]^{0}\ar[rd]&\\
0\ar[ru]&&&&0}
$$
Note that
$\chi(Gr_{\ue}(E^{(1)}_1[m-3]))=\chi(Gr_{\ue-\mathrm{\underline{dim}}S_3}(E^{(1)}_2[m-4])).$
Hence, we have
$$
\chi(Gr_{\ue}(E(1)))=\chi(Gr_{\ue}(E(\lambda)))+\chi(Gr_{\ue-\mathrm{\underline{dim}}S_3}(E^{(1)}_2[m-4])).
$$

\end{subsection}

\end{document}